\numberwithin{equation}{section}
\theoremstyle{plain}
\newtheorem{thm}{\protect\theoremname}[section]
\newenvironment{proof}[1][\protect\proofname]{\par
	\normalfont\topsep6\p@\@plus6\p@\relax
	\trivlist
	\itemindent\parindent
	\item[\hskip\labelsep
	\scshape
	#1]\ignorespaces
}{%
	\endtrivlist\@endpefalse
}
\providecommand{\proofname}{Proof}
\theoremstyle{plain}
\newtheorem{lem}[thm]{\protect\lemmaname}
\theoremstyle{plain}
\newtheorem{prop}[thm]{\protect\propositionname}
\theoremstyle{plain}
\theoremstyle{definition}
\newtheorem{defn}[thm]{\protect\definitionname}
\theoremstyle{remark}
\newtheorem{rem}[thm]{\protect\remarkname}
\theoremstyle{plain}
\newtheorem{cor}[thm]{\protect\corollaryname}
\theoremstyle{definition}
\newtheorem{example}[thm]{\protect\examplename}
\numberwithin{figure}{section}
\providecommand{\conjecturename}{Conjecture}
\providecommand{\corollaryname}{Corollary}
\providecommand{\definitionname}{Definition}
\providecommand{\examplename}{Example}
\providecommand{\lemmaname}{Lemma}
\providecommand{\propositionname}{Proposition}
\providecommand{\remarkname}{Remark}
\providecommand{\theoremname}{Theorem}
\renewcommand{\P}{\ensuremath{\mathbb{P}}}
\newcommand{\E}{\ensuremath{\mathbb{E}}}
\newtheorem{assump}[thm]{Assumption}
\begin{document}
		\date{}
	

	\title{\vspace{20pt}At the Mercy of the Common Noise: Blow-ups in a Conditional McKean--Vlasov Problem\vspace{40pt}}
	\author[1]{Sean Ledger}
	\author[2]{Andreas S{\o}jmark}
	\affil[1]{School of Mathematics, University of Bristol and Heilbronn Institute for Mathematical Research, BS8 1TW, UK }
	\affil[2]{Department of Mathematics, Imperial College London, London, SW7 2BU, UK\vspace{15pt}}

	\maketitle

	\begin{abstract} We extend a model of positive feedback and contagion in large mean-field systems, by introducing a common source of noise driven by Brownian motion. Although the driving dynamics are continuous, the positive feedback effect can lead to `blow-up' phenomena  whereby solutions develop jump-discontinuities. Our main results are twofold and concern the conditional McKean--Vlasov formulation of the model. First and foremost, we show that there are global solutions to this McKean--Vlasov problem which can be realised as limit points of a motivating particle system with common noise. Furthermore, we derive results on the occurrence of blow-ups, thereby showing how these events can be  triggered or prevented by the pathwise realisations of the common noise.
	\end{abstract}


\newpage
\section{Introduction} 
\label{Sect_Intro}

\noindent This paper studies a model of positive feedback and contagion in large mean-field systems, focusing on the interplay between positive feedback loops and a common source of noise in the driving dynamics. Specifically, we consider the convergence of the underlying finite-dimensional particle system as the number of particles tends to infinity, and we illuminate the emergence of `blow-up' phenomena in the limiting McKean--Vlasov problem
\begin{align} \label{MV} \tag{MV}
\begin{cases}
X_t = X_0 + \sqrt{1-\rho^2}B_t + \rho B_t^0 - \alpha L_t \\[2pt]
L_t = \mathbb{P}(\tau\leq t \,| \,B^{0}) \\[2pt]
\tau = \inf\{ t \geq 0 : X_t \leq 0 \}.
\end{cases}
\end{align}
Here the random drivers $B$ and $B^0$ are independent Brownian motions, and the start point $X_0$ is an independent random variable taking values in the positive half-line $(0,\infty)$, while $\alpha>0$ and $\rho\in[0,1)$ are constant parameters. By a solution to \eqref{MV} we understand an increasing process $L$ that is c\`adl\`ag with values in $[0,1]$ and zero at zero, i.e., $L_0=0$.

As we will see, the \emph{common noise}, $B^0$, plays a pivotal role: in certain cases it has the power to provoke or prevent a \emph{blow-up}, where a blow-up is defined as a jump discontinuity of  $t \mapsto L_t$. A key property of (\ref{MV}) is that these blow-ups occur endogenously: all the random drivers are continuous, yet jump discontinuities can develop through the positive feedback effect alone. 

The main technical results in this paper are twofold. Firstly, we analyse the blow-up phenomena of \eqref{MV} and derive some simple conditions for if and when such blow-ups occur, with the aim of highlighting the essential role played by the common noise (see Theorem \ref{Prop_BU_CommonNoiseBlowUp}). Secondly, we show that solutions to \eqref{MV} arise as large population limits of a `contagious' particle system, which provides the theoretical justification for the various applications discussed below. We establish this convergence result for more general drift and diffusion coefficients than the case (\ref{MV}), and we allow for a large class of natural initial conditions (see Theorem \ref{Thm:Existence}).

To get a better feel for the workings of (\ref{MV}), consider the conditional law of $X$, with absorption at the origin,  given the common noise $B^0$. This defines a flow of random sub-probability measures $t\mapsto \nu_t$, where \[
\nu_t:=\mathbb{P}(X_t\in\!\cdot\,, \, t<\tau \, | \, B^0), \qquad \text{for all } t\geq0,
\]
thus describing how the `surviving' mass of the system evolves. In particular, we can then write $L_t=1-\nu_t(0,\infty)$, which gives the accumulated loss of mass up to time $t$. From this point of view, a blow-up corresponds to a strictly positive amount of mass being absorbed at the origin in an infinitesimal period of time. A priori, the associated jump of $L$ is not uniquely specified by the equation (\ref{MV}) alone, however, we can fix a canonical choice in terms of the left limits $\nu_{t-}$ of $\nu_{t}$, at any given time $t>0$, according to the minimality constraint \eqref{eq:BU_PhysicalCondition}, which will be introduced in Section 2.

In Lemma \ref{density_process} below, we show that there is a density $V_t$  of $\nu_t$, as defined above, for all positive times $t>0$. Figure \ref{fig:theonlyfigure} illustrates the flow of these densities, for two given realisations of the common noise $B^0$---one of which leads to a blow-up, whose timing and magnitude is specified by the aforementioned minimality constraint.

\begin{rem}[Stochastic evolution equation]\label{rem:evo_eqn} As we  show in Proposition \ref{prop:SPDE_weak_form}, in a generalized sense, the flow of the densities $V_t$ is governed by the stochastic evolution equation
	 \begin{equation*}\label{eq:evo_eqn}
	\left\{
	\begin{aligned}
	dV_t &=  \tfrac{1}{2}\partial^2_{xx}V_tdt - \rho\partial_{x}V_t  dB_{t}^{0} + \alpha\partial_{x}V_t  d{L}^{c}_{t} +  \int_{\mathbb{R}}  [V_{t-}(\cdot+\alpha y)-V_{t-}]  J_L(dt,dy),
	\\[5pt]
	L^c_t &= L_t - \sum_{0<s\leq t}\Delta L_s, \quad L_t=1-\int_0^\infty V_t(x)dx,  \quad \text{and} \quad J_L=\sum_{0 < s<\infty} \delta_{(s,\Delta L_s)},
	\end{aligned}
	\right.
	\end{equation*}
	on the positive half-line with an absorbing boundary at the origin (where we recall that the jump sizes $\Delta L_t$ are specified by \eqref{eq:BU_PhysicalCondition} as mentioned above). Noting that the nonlinearity `$dL_t$' gives the flux of mass across the origin, this evolution equation provides the connection between \eqref{MV} and the mathematical neuroscience literature discussed below. We refer to Section \ref{subsec:SPDE} for further details, but we note here that the equation is quite different from typical stochastic PDEs treated in the literature, as it is nonlocal in space, and, in time, one cannot expect anything like absolute continuity of $L$ or, say, $dL_t=\ell_t dY_t$ for an exogenous driver $Y$.
\end{rem}

\begin{figure}[H]
	\begin{center}
		\hspace{-2cm} \includegraphics[width=0.6\textwidth]{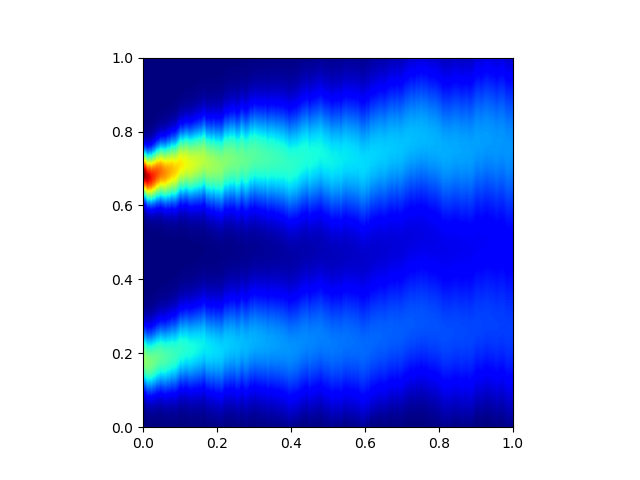} \hspace{-1.8cm} \includegraphics[width=0.6\textwidth]{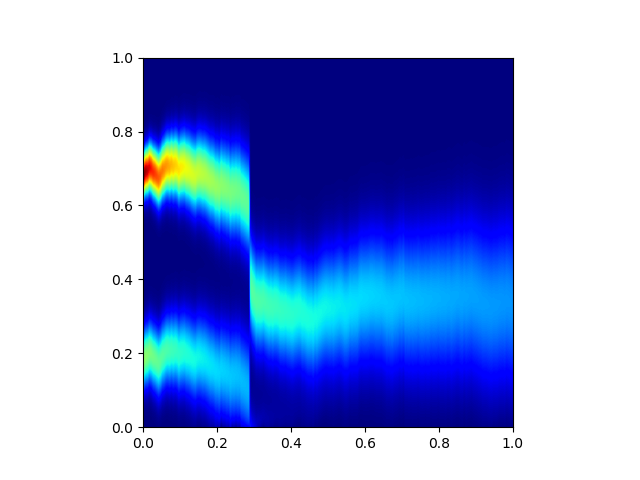} \hspace{-2cm}
		\caption{\label{fig:theonlyfigure} The plots show two different realisations of (\ref{MV}) with $\rho = 0.5$ and the same initial condition. Each pixel represents the value of the density at that space-time point, with space on the vertical and time on the horizontal. On the right, the common noise, $B^0$, decreases sufficiently quickly to cause a blow-up. See Section \ref{sec:numerics} for the numerical algorithm. }
	\end{center}
\end{figure} 

\subsection{Applications and related literature}\label{Sect:lit_review}
Our main interest in (\ref{MV}) stems from its potential to serve as a mean-field model for the interplay between common exposures and contagion in large financial systems, as also discussed in \cite{hambly_sojmark_2017}. In this setting, $X_t \in [0,\infty)$ gives the \emph{distance-to-default}
of a `typical' bank or credit-risky asset, $\rho$ captures the extent of common exposures, and $\alpha > 0$ imposes a contagious feedback effect from defaults. It then follows that a jump discontinuity of $t\mapsto L_t$ corresponds to a `default cascade' in which the feedback from default contagion explodes and a macroscopic proportion of the financial system is lost instantaneously. 

	The above mentioned model without the common noise $(\rho=0)$ motivates the analysis in \cite{hambly_ledger_sojmark_2018} and numerical schemes for this model have been explored in \cite{vadim_reisinger}. A similar financial framework also provides the motivation for \cite{nadtochiy_shkolnikov_2017, nadtochiy_shkolnikov_2018}---the latest paper extending the idiosyncratic ($\rho=0$) model to more generic random drivers and replacing the constant feedback parameter with a graph of interactions for a finite number of groups. At a more heuristic level, an analogous model for the emergence of macroeconomic crises was proposed in \cite{bouchaud}, based on a PDE $(\rho=0)$ version of the stochastic evolution equation in Remark \ref{rem:evo_eqn}, albeit without addressing the possibility of blow-ups and hence not accounting for the jump component.
	
	Concerning the presence of a common noise, more generally, we note that this has recently become a subject of great interest in the context of mean-field games---see e.g.~\cite{carmona2018,carmona2016} for a start. In particular, this also leads to the study of so-called conditional McKean--Vlasov type problems, however, the focus is quite different.

Another important motivation for studying \eqref{MV} comes from mathematical neuroscience, where (\ref{MV}) models the voltage level across a typical neuron in a large mean-field network. This application (with $\rho = 0$) is the focus of \cite{carrillo2011, carrillo2013, carrillo2015, DIRT_AAP, DIRT_SPA}, where the feedback term describes the jump in voltage level experienced when a neighbouring neuron spikes (i.e.,~hits a threshold, thus emitting an action potential and then being reset). The addition of a common noise ($\rho > 0$) is novel and can capture the effect of some systemic random stimulus influencing all neurons simultaneously, for example due to some external influence in the environment. While such common noise mean-field models are discussed in the neuroscience literature, see e.g.~\cite{brunel2000,Moreno-Bote_2010, torcini}, they have not been treated rigorously. 

We note that, in the mathematical neuroscience literature, the focus is not so much on the McKean--Vlasov formulation \eqref{MV}, but rather the corresponding deterministic $(\rho=0)$ or stochastic $(\rho>0)$ Fokker--Planck equation, where  `$dL_t$' gives the infinitesimal change in the proportion of spiking neurons over `$dt$' units of time. Since neurons are reset after spiking, one works with a mass preserving version of the evolution equation from Remark \ref{rem:evo_eqn}, obtained by adding a source term to reintroduce the mass flowing through the origin.

Forgetting about the blow-up component in Remark \ref{rem:evo_eqn}, various formulations of such stochastic Fokker--Planck equations appear in the mathematical neuroscience literature (see e.g.~\cite{brunel2000, brunel-hakim_1998, mattia_2002, torcini}), but so far there have been no attempts at attaching a rigorous meaning to neither the stochastic evolution equation itself nor how it emerges from a finite particle system with common noise. For further details, we refer to the aforementioned references as well as Proposition \ref{prop:SPDE_weak_form} and Remark \ref{rem:neuro_pde} in Section \ref{subsec:SPDE}.

\subsubsection*{Results on well-posedness}

Questions of existence and uniqueness for \eqref{MV} are delicate, not least because of the possibility of blow-ups. While the series of papers \cite{carrillo2011, carrillo2013, carrillo2015} studied the deterministic Fokker--Planck equation, working with a classical notion of solution that may cease to exist in finite time, the papers \cite{DIRT_AAP, DIRT_SPA} were the first to consider the probabilistic formulation \eqref{MV} with $\rho=0$. In particular, for  $\rho=0$, \cite{DIRT_SPA} showed that global solutions can be obtained as limit points of a suitable particle system, using ideas that have inspired our approach in Section 3 and which will be expanded upon there.

The present paper contributes to the literature on well-posedness by introducing a `relaxed' notion of solution for \eqref{MV} with $\rho>0$ that is shown to characterize the limit points of a `contagious' particle system with common noise. This provides the justification for the applications discussed above. We mention here that the `relaxation' of \eqref{MV} concerns the measurability of $L$ (or $\nu$) with respect to the common Brownian motion $B^0$, but further details are left to Section 3.

Prior to the first version of this paper, well-posedness results for \eqref{MV} were only available for the idiosyncratic $(\rho=0)$ version of the problem \cite{DIRT_AAP, DIRT_SPA, hambly_ledger_sojmark_2018, nadtochiy_shkolnikov_2017} with uniqueness only known before the first blow-up \cite{hambly_ledger_sojmark_2018, nadtochiy_shkolnikov_2017}. However, by ruling out blow-ups, the latter becomes global uniqueness if $\alpha>0$ is small enough \cite{DIRT_AAP,hambly_ledger_sojmark_2018}. Concerning $\rho>0$, it has now been shown in the second version of the preprint \cite{nadtochiy_shkolnikov_2018} that global solutions to \eqref{MV} can also be constructed from a generalised Schauder fixed point argument, thus complementing the results in Section 3 of the present paper. 

Additionally, there has been some interesting new developments on the question of uniqueness. For the case $\rho=0$, it was shown in  \cite{LS_unique} that there is local uniqueness when restarting solutions after a blow-up, and, even more recently, a complete well-posedness theory for \eqref{MV} with $\rho=0$ was then developed in the preprint \cite{delarue_sergey_shkolni}, giving global uniqueness under mild assumptions on the initial condition. Finally, \cite{LS_unique} also gives global uniqueness for $\rho>0$ under a smallness condition on the feedback parameter that rules out blow-ups.

\subsection{Overview of the paper}

The rest of the paper is split into three sections. In Section 2, immediately below, we begin our analysis by deriving a number of results about the possibility and timing of blow-ups for solutions to \eqref{MV}. The main results in this regard are collected in Theorem \ref{Prop_BU_CommonNoiseBlowUp}, which serves to illustrate the critical distinction between the idiosyncratic $(\rho=0)$ problem and the common noise $(\rho>0)$ problem. In Section 3, we proceed to formulate the `contagious' particle system that motivates our analysis of \eqref{MV} and analyse its convergence. The main result is Theorem \ref{Thm:Existence}, which shows that the limit points of the empirical measures for this particle system can be characterized as `relaxed' solutions of \eqref{MV} satisfying the minimality constraint \eqref{eq:BU_PhysicalCondition}. Finally, Section 4 provides some results on filtrations that are utilised in Section 3, and we also give a brief outline of the numerical scheme used to generate the simulations in Figure \ref{fig:theonlyfigure}.


\section{On the occurrence of blow-ups} 
\label{Sect_BU}

As emphasised in the introduction, a solution to \eqref{MV} comes with a flow of random sub-probability measures on the positive half-line, namely $t\mapsto \nu_t$, defined by
\[
\nu_t=\mathbb{P}(X_t\in\!\cdot\,, \, t<\tau \, | \, B^0), \qquad \text{for all } t\geq0.
\]
 We will return to the dynamics of this flow later, but for now we simply need it for a concise characterization of the jump sizes at blow-ups, which, due to the c\`adl\`ag nature of the system, depends on the left limits $ \nu_{t-}$, taking the form
 \[
 \nu_{t-}=\mathbb{P}(X_{t-}\in\!\cdot\,, \, t\leq \tau \, | \, B^0),\qquad \text{for all } t\geq0,
 \] with the convention that $\nu_{0-}=\nu_{0}$ (corresponding to $L_{0-}=0$ and $X_{0-}=X_0$).
 
 Given a sample path of $L$ undergoing a jump discontinuity at some time $t$, we can deduce from \eqref{MV} that the jump size $\Delta L_t$ must agree with the loss of mass resulting from a negative shift of the system by the amount $\alpha \Delta L_t$. That is, we have the constraint
\begin{equation}
\label{eq:jump_size}
\Delta L_t={\nu}_{t-}(\hspace{0.4pt}[ 0,\alpha \Delta L_t ]\hspace{0.4pt}),
\end{equation}
almost surely. While this places a restriction on the admissible jump sizes,  it is not sufficient to determine the jumps uniquely, and so it becomes necessary to have a selection principle. Therefore, we enforce the following minimality constraint, stipulating that the jump sizes should satisfy
\begin{equation}
\label{eq:BU_PhysicalCondition}
\Delta L_t = \inf\{  x > 0 : {\nu}_{t-}(\hspace{0.4pt}[0,\alpha x]\hspace{0.4pt}) < x \}, \qquad \text{for all } t\geq 0,
\end{equation}
where the equality holds almost surely (recall the conventions $\nu_{0-}=\nu_0$ and $L_{0-}=0$). As we prove in Proposition \ref{Existence_Prop_MinimalityForCommonNoise}, this constraint amounts to only considering c\`adl\`ag solutions and always selecting the minimal jump size satisfying the requirement \eqref{eq:jump_size}.

In the special case $\rho=0$, the constraints \eqref{eq:jump_size} and \eqref{eq:BU_PhysicalCondition} are of course deterministic, with $\nu_t$ taking the unconditional form
\[
\nu_{t}=\mathbb{P}(X_{t}\in \cdot \, , \, t<\tau) \qquad \text{for all } t\geq0,
\]
and we stress that they correspond precisely to the notion of a `physical solution' first introduced in \cite{DIRT_AAP} (see \cite{hambly_ledger_sojmark_2018} for a treatment closer to the present one, and see also \cite{nadtochiy_shkolnikov_2017} for a slightly different setup). A simple pictorial illustration of these constraints, focused on the special case $\rho=0$, can be found in \cite[Fig.~2.1]{hambly_ledger_sojmark_2018}. Crucially, we mention here that, in Section 3, the solutions arising as limit points of the motivating particle system will be shown to satisfy the minimality constraint \eqref{eq:BU_PhysicalCondition}.

\subsection{Idiosyncratic vis-\`a-vis common noise}

The purpose of Section 2 is to present a clear and simple juxtaposition of the phenomenon of blow-ups for \eqref{MV} with and without the common noise. In particular, we will illustrate how the addition of a common noise can determine whether or not the model undergoes a blow-up. We stress that the methods employed are not sharp, as they are only intended to guarantee the existence of non-trivial blow-up probabilities, rather than defining precise conditions under which they occur. Even in the idiosyncratic $(\rho = 0)$ setting, one should not expect to find a neat closed-form condition on the choice of initial law and feedback parameter in a way that determines if and when a blow-up occurs, and so we shall not pursue this here.

Nevertheless, we note that the proofs rely on constructive arguments (in Sections \ref{Subsec_BU_idio} and \ref{Sect_BU_Common}), thereby outlining specific events on which a blow-up does or does not occur for certain initial conditions. When constructing events on which a blow-up is excluded, the key ingredient will be the minimality constraint (\ref{eq:BU_PhysicalCondition}) introduced above. Conversely, in arguments about enforcing blow-ups, one of our main tools will be a variant of the moment method from \cite[Thm.~1.1]{hambly_ledger_sojmark_2018}.

The next theorem is the main result of this section and it serves to illustrate the decisive role played by the common noise in relation to blow-ups. As in the original contribution \cite{DIRT_AAP} for the idiosyncratic problem \eqref{MV} with $\rho=0$, we focus on a fixed starting point $X_0=x_0$. That is, the system is started from a Dirac mass $\nu_0=\delta_{x_0}$ at some $x_0>0$. In this setting, it follows from \cite[Thm.~2.3 \& 2.4]{DIRT_AAP} that, if $\alpha>0$ is small enough as a function of $x_0$, then \eqref{MV} with $\rho=0$ admits a unique (continuously differentiable) solution $L$ with no blow-ups on any given time interval. With the common noise, however, the situation is markedly different.

\begin{thm}[Blow-ups]
	\label{Prop_BU_CommonNoiseBlowUp} Consider the McKean--Vlasov problem \eqref{MV} with a fixed starting point $X_0=x_0$. We have the following dichotomy for the phenomena of blow-ups, depending on the presence $(\rho>0)$ or absence $(\rho=0)$ of the common noise.
	
	\begin{itemize}
		\item[(i)] Idiosyncratic  model: given any $\alpha>0$, $L$ has a blow-up for starting points $x_0\in(0,\infty)$ close enough to the origin, while there are no blow-ups for starting points $x_0\in(0,\infty)$ far enough from the origin.
		
		\item[(ii)] Common noise model: given any $\alpha>0$, blow-ups are random events with
		\[
		0< \mathbb{P}(L \emph{\textrm{ has a blow-up}}) < 1,
		\]
		for any starting point $x_0\in(0,\infty)$.
	\end{itemize}
\end{thm}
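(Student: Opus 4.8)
Part (i) is essentially already available in the cited literature, so I would dispose of it quickly. The ``no blow-up for $x_0$ far from the origin'' half follows from \cite{DIRT_AAP} (see also \cite{hambly_ledger_sojmark_2018}) once one observes that, for $\rho=0$, the map $(\alpha,x_0)\mapsto(\lambda\alpha,\lambda x_0)$ combined with the Brownian rescaling $X_t\mapsto\lambda X_{t/\lambda^2}$ is a symmetry of \eqref{MV}; taking $\lambda=1/x_0$ shows that \eqref{MV} with parameters $(\alpha,x_0)$ is equivalent to the one with parameters $(\alpha/x_0,1)$, so that ``$\alpha$ small relative to $x_0$'' and ``$x_0$ large relative to $\alpha$'' are the same condition. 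The ``blow-up for $x_0$ close to the origin'' half follows from the same scaling together with a variant of the moment method of \cite[Thm.~1.1]{hambly_ledger_sojmark_2018}, applied if necessary to $\nu_\varepsilon$ for a small $\varepsilon>0$ (at which time $\delta_{x_0}$ has already smoothed to an honest density by Lemma \ref{density_process}): concentrating the initial mass near the origin makes the relevant moment inequality fail. The real content is part (ii), and from here on $\rho>0$ is fixed.

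\textbf{Positivity: $\mathbb{P}(L\text{ has a blow-up})>0$.} I would work on an event on which the common noise plunges. Fix a steep decreasing curve $g$ with $g(0)=0$, say $g(t)=-\lambda t$, and set $\mathcal{B}_{\lambda,T}=\{\sup_{t\le T}|B^0_t-g(t)|\le 1\}$; this has strictly positive probability for every $\lambda,T>0$ because the support of Wiener measure contains a neighbourhood of every continuous path started at $0$. On $\mathcal{B}_{\lambda,T}$ the term $\rho B^0_t$ acts like a deterministic drift of size $\approx-\rho\lambda t$, so, after the change of variables $X_t\mapsto X_t-\rho B^0_t$, the conditional problem becomes an idiosyncratic-type problem with a strong drift towards a mildly moving absorbing boundary; equivalently, in the density picture of Remark \ref{rem:evo_eqn}, the surviving mass is transported towards the origin at speed $\approx\rho\lambda$ on top of the diffusion. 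For $\lambda$ large the mass coming from $x_0$ is still concentrated on a scale $\asymp\sqrt{x_0/(\rho\lambda)}$ by the time it reaches the origin, forcing the conditional near-origin density above $1/\alpha$; a conditional version of the moment method of \cite[Thm.~1.1]{hambly_ledger_sojmark_2018} — carried out pathwise in $B^0$, using that $L=\mathbb{P}(\tau\le\cdot\,|B^0)$ is $B^0$-measurable — then shows that $L$ cannot stay continuous, i.e.\ a blow-up occurs on $\mathcal{B}_{\lambda,T}$. Since $\lambda$, hence the strength of the plunge, can be taken as large as needed in terms of $(x_0,\alpha,\rho)$, this argument works for every $x_0\in(0,\infty)$.

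\textbf{Non-triviality: $\mathbb{P}(L\text{ has a blow-up})<1$.} Here the minimality constraint \eqref{eq:BU_PhysicalCondition} is the key tool. By Lemma \ref{density_process} and the discussion of \eqref{eq:jump_size}--\eqref{eq:BU_PhysicalCondition}, a blow-up at time $t$ requires $\limsup_{x\downarrow 0}x^{-1}\nu_{t-}([0,\alpha x])\ge 1$; in particular, if $\nu_{t-}$ has a density bounded by some $M<1/\alpha$ near the origin, then $\Delta L_t=0$. I would build an event $\mathcal{N}$ on which $B^0$ rises quickly enough to sweep mass away from the origin so that this bound holds for all $t>0$. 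Take $\mathcal{N}$ to be a Wiener tube around a steep increasing curve $h$ with $h(0)=0$, with tube radius $\psi(t)$ allowed to vanish as $t\downarrow 0$ but kept above an upper function for $|B^0|$ near $0$ (e.g.\ $\psi(t)\asymp\sqrt{Ct\log\log(1/t+e)}$), so that $\mathbb{P}(\mathcal{N})>0$ while $B^0_t$ is pinned near $h(t)$ for small $t$. On $\mathcal{N}$ I would argue: (a) for $t$ up to a positive time $t_0=t_0(x_0)$ the transient spike of the smoothed Dirac stays near $x_0$ and away from the origin, where the density vanishes by the Dirichlet condition, so \eqref{eq:BU_PhysicalCondition} forbids a blow-up; (b) for $t\ge t_0$ the upward transport spreads the surviving mass until, at some finite $T_1$, one has $\|V_{T_1}\|_\infty<\tfrac{1}{2\alpha}$; (c) once this holds, the Dirichlet heat semigroup together with the measure-preserving transport terms in Remark \ref{rem:evo_eqn} and the maximum principle keep $\|V_t\|_\infty<\tfrac{1}{2\alpha}$, hence $\Delta L_t=0$ and (consistently) no further jump terms, for all $t\ge T_1$. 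The steps (a)--(c) rest on the bootstrap ``near-origin density sub-critical $\Rightarrow$ $L$ small $\Rightarrow$ the drift $-\alpha\,dL_t$ controlled $\Rightarrow$ density stays sub-critical'', which I would run as a continuity-in-time argument for the conditional density, obtaining the a priori bounds by comparison with the $\alpha=0$ (killed, $\rho B^0$-drifted) density.

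\textbf{Main obstacle.} The delicate part is steps (b)--(c) of the non-triviality argument: making rigorous, on $\mathcal{N}$ and for \emph{arbitrarily small} $x_0$, that the competition between the upward push of $B^0$ and the downward feedback $-\alpha\,dL_t$ never reconstitutes a concentration of the surviving mass at the origin, i.e.\ controlling the conditional density uniformly in time. By comparison the blow-up direction is more robust, the only genuine work there being the transcription of the moment method to the $B^0$-conditional setting where $L$ is random.
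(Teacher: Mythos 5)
Your proposal follows essentially the same route as the paper: part (i) via the moment method of \cite[Thm.~1.1]{hambly_ledger_sojmark_2018} for small $x_0$ and a Gaussian density estimate combined with the minimality constraint \eqref{eq:BU_PhysicalCondition} for large $x_0$ (the paper's Proposition \ref{Prop_BU_LinearBounds}), and part (ii) by restricting $B^0$ to positive-probability tubes around steep linear drifts---downward plus a $B^0$-conditional moment method to force a blow-up (Proposition \ref{prop:Dirac_blow-up}), upward plus Gaussian density bounds, the minimality constraint, and the bootstrap/continuation argument you flag as the main obstacle to exclude one (Proposition \ref{Thm_avoid_blow-up}). The only calibrations to note are that your tube radius in the blow-up direction must be taken small relative to $\alpha$ rather than equal to $1$ for the conditional moment inequality to produce a contradiction, and that your step (c) can be replaced by the paper's simpler unconditional bound $\Vert V_t\Vert_\infty\leq(2\pi(1-\rho^2)t)^{-1/2}<\alpha^{-1}$ for $t$ large, which avoids any $L^\infty$-propagation argument.
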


The proof of this theorem is the subject of the next two subsections, where we will shed further light on the blow-up behaviour for the idiosyncratic $(\rho=0)$ and common noise $(\rho>0)$ versions of \eqref{MV}. The first part of Theorem \ref{Prop_BU_CommonNoiseBlowUp} follows from Proposition \ref{Prop_BU_LinearBounds} in Section \ref{Subsec_BU_idio}, while the second and main part of the result
is a consequence of Proposition \ref{Thm_enforce_blow-up} in Section \ref{Sect_BU_Common}. The dichotomy pointed out by Theorem \ref{Prop_BU_CommonNoiseBlowUp} is illustrated in Figures \ref{fig:the2ndfigure} and \ref{fig:the3rdfigure} in Section \ref{Sect_BU_Common}. These figures give a simple pictorial account of how the common noise may provoke and/or prevent a blow-up, as compared to how the  idiosyncratic system evolves.
 
 Before proceeding with the analysis of blow-ups, we make the following general observation which was alluded to already in the introduction:~for all strictly positive times $t>0$, the flow $t\mapsto \nu_t$ is given by a flow of bounded densities $t\mapsto V_t$, regardless of the nature of the initial condition $\nu_0$. This is analogous to the situation for $\rho=0$ \cite[Prop.~2.1]{hambly_ledger_sojmark_2018}, except of course for the randomness of the flow in the $\rho>0$ setting.

 \begin{lem}[Existence of densities]\label{density_process} Suppose there is a solution to \eqref{MV}. For every $t>0$, the associated random sub-probability measure $\nu_t$ has a (random) density $V_t\in L^{\infty}(0,\infty)$.
 \end{lem}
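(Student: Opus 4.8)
The plan is to show that for any $t>0$, the conditional law $\nu_t=\mathbb{P}(X_t\in\cdot\,,\,t<\tau\mid B^0)$ is absolutely continuous with a bounded density, by comparing $X$ (before absorption) to a Gaussian whose transition density is bounded away from any fixed positive time. Fix $t>0$ and work $\omega$-by-$\omega$ in the common noise $B^0$. Conditionally on $B^0$, the process $X$ evolves as $X_s = X_0 + \sqrt{1-\rho^2}\,B_s + \rho B^0_s - \alpha L_s$, where $s\mapsto L_s$ is a (conditionally deterministic, given $B^0$) nondecreasing càdlàg function, and $X_0$ is independent of both Brownian motions. The key point is that $-\alpha L_s$ is a downward drift: since $L$ is nondecreasing, absorption at the origin under the true dynamics happens \emph{no later} than it would for the shifted-but-not-killed process.

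The main step is a comparison/domination argument. For fixed $s$ and a test set $A\subseteq(0,\infty)$, I would write, conditionally on $B^0$,
\[
\nu_s(A) = \mathbb{P}\big(X_s\in A,\ \inf_{u\le s}X_u>0 \ \big|\ B^0\big) \le \mathbb{P}\big(X_0+\sqrt{1-\rho^2}\,B_s+\rho B^0_s\in A+\alpha L_s \ \big|\ B^0\big),
\]
because the event $\{t<\tau\}$ forces $L_s\le \nu_{s-}(\{0\})\cdots$ — more carefully, on $\{s<\tau\}$ we have $X_s = X_0+\sqrt{1-\rho^2}B_s+\rho B^0_s-\alpha L_s$ with $L_s$ a fixed number given $B^0$, so $X_s\in A$ is the event that the Gaussian random variable $X_0+\sqrt{1-\rho^2}B_s+\rho B^0_s$ lies in the translated set $A+\alpha L_s$, and we simply drop the survival constraint to get an upper bound. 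Now $X_0+\sqrt{1-\rho^2}B_s$ is, conditionally on $B^0$, a random variable whose law is the convolution of the law of $X_0$ with a centred Gaussian of variance $(1-\rho^2)s$; since $\rho<1$ this variance is strictly positive, so this convolution has a density bounded by $\big(2\pi(1-\rho^2)s\big)^{-1/2}$, uniformly in the law of $X_0$ and after the deterministic translation by $\rho B^0_s+\alpha L_s$. Hence $\nu_s(A)\le \big(2\pi(1-\rho^2)s\big)^{-1/2}\,\mathrm{Leb}(A)$ for every Borel $A$, which shows $\nu_s\ll\mathrm{Leb}$ with $\|V_s\|_{L^\infty(0,\infty)}\le \big(2\pi(1-\rho^2)s\big)^{-1/2}$, almost surely. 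Taking $s=t$ gives the claim, with an explicit bound that blows up like $t^{-1/2}$ as $t\downarrow0$, consistent with the initial condition possibly being a Dirac mass.

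The only genuine obstacle is bookkeeping around the càdlàg nature of the system and the conditioning: one must be careful that $L_s$ is measurable with respect to $B^0$ (so that it is a constant under the conditional law and the translation argument is legitimate) — this is part of the standing framework for \eqref{MV} — and that "$t<\tau$" may be replaced by dropping the constraint entirely without disturbing measurability of the resulting Gaussian bound. There is also the minor point that the above bound is at a fixed time $s=t$, which is all that is asserted; no regularity of $t\mapsto V_t$ is needed here. One should also note the bound is uniform over $\omega$, so "almost surely" and "for every $t>0$" can be arranged in the right order by applying the estimate at a countable dense set of times and, if a version for all $t$ is wanted, invoking the càdlàg structure — but for the statement as given, the fixed-$t$ argument suffices.
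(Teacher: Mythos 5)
Your argument is correct and is essentially the paper's own proof: drop the absorption constraint, use the $B^0$-measurability of $L$ and the independence of $X_0$, $B$, $B^0$ to see that, conditionally on $B^0$, the law of $X_t$ is dominated by a translate of the convolution of $\nu_0$ with a centred Gaussian of variance $(1-\rho^2)t$, giving $\nu_t(A)\le(2\pi(1-\rho^2)t)^{-1/2}\mathrm{Leb}(A)$ and hence a bounded density by Radon--Nikodym. No substantive differences.
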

 \begin{proof}
 	Fix any $t>0$ and let $p_t$ denote the density of the  Brownian motion $B_t$. Omitting the absorption at the origin, and using Tonelli's theorem to change the order of integration, we have
 	\begin{align*}
 	\nu_t(A) & \leq \mathbb{P}\bigl(X_0 + \sqrt{1-\rho^2} B_t + \rho B^0 -\alpha L_t \in A \mid B^0\bigr) \\
 	&= \int_A \int^\infty_0 p_{(1-\rho^2)t}(x - x_0 - \rho B^0 + \alpha L_t) \nu_0(dx_0) dx,
 	\end{align*}
 	for any Borel set $A\in\mathcal{B}(0,\infty)$, where we have used the independence of $X_0$, $B$, and $B^0$, as well as the $B^0$-measurability of $L$. Consequently,
 	\[
 	\nu_{t}(A)\leq(2\pi (1-\rho^2)t)^{-1/2}\text{Leb}(A),
 	\]
 	for all  Borel set $A\in\mathcal{B}(0,\infty)$, so the result follows from the Radon--Nikodym theorem.
 \end{proof}

As in the above, we will always use $\nu_0$ to denote the law of the initial random variable $X_0$ for \eqref{MV}, where it is understood that $\nu_0$ is a Borel probability measure on $\mathcal{B}(0,\infty)$. Furthermore, throughout the rest of the paper, we will say that $\nu_0$ is supported on a given Borel set $A\in \mathcal{B}(0,\infty)$ if $\nu_0$ assigns zero mass to the complement of $A$.

\subsection{The idiosyncratic model with  $\rho = 0$}\label{Subsec_BU_idio}

Without the common noise, the flow $t\mapsto \nu_t$ and the loss $t \mapsto L_t$  are deterministic. Hence the blow-ups for (\ref{MV}) with $\rho=0$ are completely deterministic events. Since we know from above that each $\nu_t$ has a density, a simple observation concerning blow-ups is the following. If, for a given $t_0>0$, we have $V_{t_0-}(x)<\alpha^{-1}$ in a right-neighbourhood of the origin, then $\Delta L_{t_0} = 0$ is fixed by (\ref{eq:BU_PhysicalCondition}), and so a blow-up cannot occur at that time. In the opposite direction, we can similarly observe that if, in stead, we have $V_{t_0-}(x)\geq\alpha^{-1}$ in a right-neighbourhood of the origin, then a blow-up must occur at time $t_0$.

Furthermore, from the proof of Lemma \ref{density_process} above, we can see that
\begin{equation}
\label{eq:BU_DensityControl}
\Vert V_{t} \Vert_{\infty} \leq \min\{ \Vert V_{0} \Vert_{\infty} , (2\pi t)^{-1/2} \},
	\qquad \textrm{for every } t \geq 0, 
\end{equation}
with $\Vert V_{0} \Vert_{\infty}$ interpreted as $+\infty$ if $\nu_0$ does not have a density. Thus, referring again to $\eqref{eq:BU_PhysicalCondition}$, we obtain the following result as an immediate consequence of the previous proof of Lemma \ref{density_process}.

\begin{cor}[Initial curbs on blow-up]
\label{Cor_BU_NoBlowUpEasy} 
For any initial condition, $\nu_0$, no blow-up can occur in \eqref{MV} strictly after time $\alpha^2 / 2\pi$. Furthermore, if $\nu_0$ has a density, $V_0$, satisfying $\Vert V_0 \Vert_\infty < \alpha^{-1}$, then a blow-up never occurs.
\end{cor}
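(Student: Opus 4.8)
The plan is to combine the density bound \eqref{eq:BU_DensityControl} with the minimality constraint \eqref{eq:BU_PhysicalCondition}, after first checking that \eqref{eq:BU_DensityControl} also controls the left limits. For this, note that the computation in the proof of Lemma \ref{density_process} applies verbatim with $X_t$ replaced by $X_{t-}=X_0+B_{t-}-\alpha L_{t-}=X_0+B_t-\alpha L_{t-}$, using that $B$ is continuous; writing $C_t:=\min\{\Vert V_0\Vert_\infty,(2\pi t)^{-1/2}\}$ (with $\Vert V_0\Vert_\infty=+\infty$ if $\nu_0$ has no density) and adopting the convention $\nu_{0-}=\nu_0$, this yields $\nu_{t-}(A)\le C_t\,\mathrm{Leb}(A)$ for every Borel $A\subseteq(0,\infty)$ and every $t\ge0$, hence in particular $\nu_{t-}([0,\alpha x])\le C_t\,\alpha x$ for all $x>0$.

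Next I would feed this into \eqref{eq:BU_PhysicalCondition}. If $C_t\,\alpha<1$, then $\nu_{t-}([0,\alpha x])\le C_t\,\alpha x<x$ for every $x>0$, so the set $\{x>0:\nu_{t-}([0,\alpha x])<x\}$ equals all of $(0,\infty)$ and \eqref{eq:BU_PhysicalCondition} forces $\Delta L_t=\inf(0,\infty)=0$; that is, no blow-up occurs at time $t$. It then only remains to see when $C_t\,\alpha<1$. For the first assertion, $(2\pi t)^{-1/2}\alpha<1$ is equivalent to $t>\alpha^2/2\pi$, so for every such $t$ we get $C_t\,\alpha\le(2\pi t)^{-1/2}\alpha<1$, which is precisely the statement that no blow-up can occur strictly after time $\alpha^2/2\pi$. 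For the second assertion, if $\nu_0$ has a density $V_0$ with $\Vert V_0\Vert_\infty<\alpha^{-1}$, then $C_t\,\alpha\le\Vert V_0\Vert_\infty\,\alpha<1$ for every $t\ge0$ (including $t=0$, where $\nu_{0-}=\nu_0$ has density $V_0$), so a blow-up is excluded at every time.

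I do not expect any genuine obstacle here. The only points requiring a line of care are the transfer of the bound from $\nu_t$ to $\nu_{t-}$ --- immediate, since the only discontinuity of $X$ at a blow-up comes from $L$ and not from the continuous driver $B$ --- and the observation that the set in \eqref{eq:BU_PhysicalCondition} is always nonempty (because $\nu_{t-}$ is a sub-probability measure, so $\nu_{t-}([0,\alpha x])\le1<x$ for large $x$), so that the infimum there is well defined and our conclusion $\Delta L_t=0$ is meaningful.
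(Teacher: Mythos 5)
Your proposal is correct and follows exactly the route the paper intends: the corollary is stated as an immediate consequence of the density bound \eqref{eq:BU_DensityControl} combined with the minimality constraint \eqref{eq:BU_PhysicalCondition}, and your two points of care (transferring the bound to the left limits $\nu_{t-}$ via the continuity of $B$, and the non-emptiness of the set in \eqref{eq:BU_PhysicalCondition}) are precisely the details the paper leaves implicit.
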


By working harder, we can extend this approach to show that if the initial density is supported far enough from the boundary, then there is insufficient time for the mass to reach the boundary before the time decay in (\ref{eq:BU_DensityControl}) prevents a blow-up. By exploiting this, the next result gives a linear range in $\alpha$, which is clearly  not sharp, but it is nonetheless illustrative and useful for what follows.

\begin{prop}[Linear spatial bounds for blow-up]
\label{Prop_BU_LinearBounds}
If the initial condition $\nu_0$ is supported on $(0,\tfrac{1}{2}\alpha)$ then a blow-up must occur in \eqref{MV}. If $\nu_0$ is supported on $(\tfrac{5}{4}\alpha, +\infty)$ then a blow-up will never occur. 
\end{prop}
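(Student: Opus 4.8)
The plan is to treat the two assertions separately, since they call for opposite strategies: the first is a ``forcing'' statement handled by a moment/contradiction argument, and the second is a ``ruling-out'' statement handled by the minimality constraint together with the density bound \eqref{eq:BU_DensityControl}.

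For the first claim, suppose $\nu_0$ is supported on $(0,\tfrac12\alpha)$ and suppose, for contradiction, that no blow-up ever occurs, so that $t\mapsto L_t$ is continuous. In particular $L_t \leq 1$ always, but as long as no mass has hit the origin we even have control of the form $L_t$ small for small $t$; more usefully, continuity of $L$ forces the absorbed mass $L_t$ to be bounded strictly below $1$ only as long as the support has not been pushed across the origin. The key point is that the whole support of the law of $X_t$ (ignoring absorption) sits inside $(-\alpha L_t, \tfrac12\alpha - \alpha L_t) + \sqrt{1-\rho^2}B_t + \rho B^0_t$, but this is the wrong bound because $B_t$ is unbounded. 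Instead I would run the variant of the moment method from \cite[Thm.~1.1]{hambly_ledger_sojmark_2018}: if there is no blow-up, then $L$ is continuous, and then from \eqref{MV} the process $X_{t\wedge\tau}$ satisfies a bound allowing one to estimate $\E[(X_{t\wedge\tau})^p]$ or a suitable exponential/indicator functional conditionally on $B^0$, deriving that $L_t$ must reach $1$ in finite time at a rate incompatible with continuity once the initial support lies within distance $\tfrac12\alpha$ of the origin. The cleanest route is probably: assume $L$ is continuous, note $L_t \geq \nu_{t-}([0,\alpha L_t])$ would have to fail (else a jump is forced by \eqref{eq:jump_size}--\eqref{eq:BU_PhysicalCondition}), i.e. $\nu_{t}(0,\infty) > 1 - L_t$ is forced to stay, and then the moment inequality shows the mass near the origin grows fast enough that $\nu_{t-}([0,\alpha L_t]) \geq L_t$ is eventually violated. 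The main obstacle here is to get the moment estimate to work \emph{conditionally on $B^0$}, uniformly enough that the forcing conclusion holds almost surely and not merely in expectation; I expect one fixes a realisation of $B^0$, uses that $\rho B^0$ is then a continuous deterministic shift, and applies the $\rho = 0$-style argument to the shifted system.

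For the second claim, suppose $\nu_0$ is supported on $(\tfrac54\alpha,+\infty)$. By Corollary \ref{Cor_BU_NoBlowUpEasy}, no blow-up can occur after time $\alpha^2/2\pi$, so it suffices to rule out a blow-up at every time $t_0 \in (0, \alpha^2/2\pi]$. By the observation preceding Corollary \ref{Cor_BU_NoBlowUpEasy}, a blow-up at $t_0$ requires $V_{t_0-}(x) \geq \alpha^{-1}$ on a right-neighbourhood of the origin, or at least (via \eqref{eq:BU_PhysicalCondition}) that $\nu_{t_0-}([0,\alpha x]) \geq x$ for some $x>0$; in particular the left limit must place some mass in $[0,\alpha)$, hence some mass must have travelled from the support of $\nu_0$, which lies above $\tfrac54\alpha$, down to the origin by time $t_0 \leq \alpha^2/2\pi$. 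The displacement of $X$ up to absorption is $\sqrt{1-\rho^2}B_t + \rho B^0_t - \alpha L_t$, and $L_t \leq 1$, so a particle starting above $\tfrac54\alpha$ can only reach $0$ by time $t_0$ if $\sqrt{1-\rho^2}B_{t_0} + \rho B^0_{t_0} \leq -\tfrac14\alpha$. I would make this quantitative: conditionally on $B^0$, the mass of $\nu_{t_0-}$ in $[0,\alpha]$ is at most $\mathbb{P}(\sqrt{1-\rho^2}B_{t_0} \leq \alpha - \tfrac54\alpha - \rho B^0_{t_0} + \alpha L_{t_0} \mid B^0)$, and one needs to show this stays below what \eqref{eq:BU_PhysicalCondition} would require for a jump. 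The subtle point is the feedback term $\alpha L_{t_0}$: if $L$ had already built up close to $1$ before $t_0$, it could help push particles down. The resolution is a bootstrap: as long as no blow-up has occurred, $L$ is continuous and $L_0 = 0$; combined with $t_0 \leq \alpha^2/2\pi$ and the fact that the initial support starts at distance $\tfrac54\alpha > \alpha + \tfrac14\alpha$ from the origin, one shows $L$ cannot reach a level where mass accumulates in $[0,\alpha)$ densely enough to satisfy $\nu_{t-}([0,\alpha x]) \geq x$, so no jump is ever triggered. I expect the main obstacle to be calibrating the constant $\tfrac54$ precisely: one must combine the travel-distance bound (at least $\alpha$ to have mass in the absorbing-relevant region $[0,\alpha]$, since the minimality infimum \eqref{eq:BU_PhysicalCondition} only ``sees'' mass in $[0,\alpha x]$ with $x \leq 1$), the extra $\tfrac14\alpha$ of slack needed to beat the Gaussian tail on the time horizon $\alpha^2/2\pi$, and the a priori absorbed mass $\alpha L_{t-} \leq \alpha$; closing the loop requires showing these don't conspire, which is where the self-consistent (bootstrap) use of ``no blow-up so far $\Rightarrow$ $L$ stays small enough'' does the work.
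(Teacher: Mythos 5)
Your strategic outline matches the paper's on both counts (moment method to force a blow-up; a Gaussian displacement bound plus the minimality constraint to exclude one), but in each part the step that actually closes the argument is missing, and in the second part your proposed detour is unnecessary.

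For the first claim, the entire proof is one observation you did not make: if $\nu_0$ is supported on $(0,\tfrac12\alpha)$ then $\mathbb{E}X_0<\tfrac12\alpha$, and this is precisely the hypothesis of the moment-method criterion \cite[Thm.~1.1]{hambly_ledger_sojmark_2018} (recovered here by taking $f(x)=x$ and $t\to\infty$ in Proposition \ref{BlowUp_Prop_BlowUpForTransformed}: continuity of $L$ up to time $t$ forces $0\leq m_0-\alpha(\tfrac12 L_t^2+L_t(1-L_t))$, and letting $t\to\infty$ with $L_t\to1$ gives $m_0\geq\tfrac12\alpha$, a contradiction). Your description of how the moment method concludes --- ``the mass near the origin grows fast enough that $\nu_{t-}([0,\alpha L_t])\geq L_t$ is eventually violated'' --- is not the mechanism; the contradiction comes from the expectation identity at $t=\infty$, not from tracking mass near the origin. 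Your worry about making the estimate work conditionally on $B^0$ is moot: the proposition sits in the idiosyncratic subsection ($\rho=0$), so $L$ is deterministic.

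For the second claim, the key quantitative step is absent. No bootstrap on $L$ is needed: one simply discards absorption and uses the crude bound $L_t\leq1$, so that a particle started at $x_0>\tfrac54\alpha$ sits at $x_0+B_t-\alpha L_t\geq \tfrac14\alpha+B_t$, giving
\[
\nu_t(\hspace{0.4pt}[0,\alpha x]\hspace{0.4pt})\leq \frac{\alpha x}{\sqrt{2\pi t}}\exp\Bigl\{-\frac{\alpha^2(\tfrac14-x)^2}{2t}\Bigr\}
\]
for $x<\tfrac14$. The trick you are missing is to maximise this over \emph{all} $t>0$ at once: $t\mapsto(2\pi t)^{-1/2}e^{-c^2/2t}$ has maximum $(2\pi e c^2)^{-1/2}$, so $\nu_t([0,\alpha x])\leq x\bigl(\sqrt{2\pi e}\,(\tfrac14-x)\bigr)^{-1}<x$ for all $t$ and all $x$ with $\tfrac14-x>(2\pi e)^{-1/2}$; since $2\pi e>16$, i.e.\ $(2\pi e)^{-1/2}<\tfrac14$, such $x>0$ exist, and \eqref{eq:BU_PhysicalCondition} then forbids a jump at every time. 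This time-uniform maximisation is exactly what calibrates the constant $\tfrac54$ (one needs $b-1>(2\pi e)^{-1/2}$), and it makes your appeal to Corollary \ref{Cor_BU_NoBlowUpEasy} and the self-consistent ``no blow-up so far'' loop superfluous. Without it, your sketch does not establish the linear-in-$x$ bound $\nu_{t-}([0,\alpha x])<x$ that the minimality constraint requires; bounding only the total mass reaching $[0,\alpha]$ is not enough.
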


\begin{proof}
If $\nu_0$ is supported on $(0,\frac{1}{2}\alpha)$, then we have $\mathbb{E}X_0<\frac{1}{2}\alpha$, so the first part is immediate from \cite[Thm.~1.1]{hambly_ledger_sojmark_2018}. For the second part, notice that, by disregarding the absorption at the origin, we obtain the upper bound
\begin{align*}
\nu_{t}(\hspace{0.4pt}[0,\alpha x] \hspace{0.4pt} ) 
	&\leq \mathbb{P}(X_0 + B_t - \alpha L_t \in [0,\alpha x])\nonumber\\
	&= \int^\infty_0 \int_0^{\alpha x} \frac{1}{\sqrt{2\pi t}} \exp \Big\{ - \frac{(z-x_0 + \alpha L_t)^2}{2t} \Big\} dz \nu_0(dx_0) .
\end{align*}
Suppose $\nu_0$ is supported on $(\alpha b, +\infty)$ for some $b > 1$. Since $L_t \leq 1$, the previous bound gives
\begin{align*}
\nu_{t}( \hspace{0.4pt}[ 0,\alpha x ] \hspace{0.4pt} ) 
	&\leq \int^\infty_0 \int_0^{\alpha x} \frac{1}{\sqrt{2\pi t}} \exp \Big\{ - \frac{(z -\alpha (b - 1))^2}{2t} \Big\} dz \nu_0(dx_0) \\
	&\leq  \frac{\alpha x}{\sqrt{2\pi t}} \exp \Big\{ - \frac{\alpha^2(b - 1 - x)^2}{2t} \Big\},
\end{align*}
for every $x <b-1$. Noting also that $t \mapsto (2\pi t)^{-1/2} e^{-c^2 / 2t}$ has maximum value of $(2\pi e c^2)^{-1/2}$, we can thus deduce that
\[
	\nu_t (\hspace{0.4pt}[ 0, \alpha x] \hspace{0.4pt}) \leq \frac{\alpha x}{\sqrt{2\pi e \alpha^2 (b-1-x)^2}} < x
\]
for all $x <b-1 - (2\pi e)^{-1/2}$, provided $b>1+(2\pi e)^{-1/2}$. Since $2\pi e > 16$, taking $b = \tfrac{5}{4}$ is sufficient, and so the proof is complete.
\end{proof}

\subsubsection{Transformed loss processes}

A natural question, arising for example from \cite{nadtochiy_shkolnikov_2017}, is what happens when we replace the linear loss term in (\ref{MV}, $\rho=0$) with a general function of the loss process. This leads to the similar problem
\begin{align}
\label{eq:BU_TransformedLoss}
\begin{cases}
 X_t = X_0 + B_t - \alpha f( L_t ) \\
L_t = \mathbb{P}(\tau \leq t)\\
\tau = \inf\{ t \geq 0 : X_t \leq 0 \} ,
\end{cases}
\end{align}
for a given function $f : [0,1) \to \mathbb{R}$. The particular example $f(x) = -\log(1 - x)$ is the focus of \cite{nadtochiy_shkolnikov_2017}. To avoid mass escaping to infinity, we can restrict to $f$ bounded below---then we know that $L_t \to 1$ as $t \to \infty$. In particular, we note that this leads to a useful simplification of the right-hand side in Proposition \ref{BlowUp_Prop_BlowUpForTransformed} below.

While we focus on $f(x)=x$ in this as paper, as in the motivating papers \cite{DIRT_AAP,DIRT_SPA,hambly_ledger_sojmark_2018,hambly_sojmark_2017}, we note that our results extend immediately to (\ref{eq:BU_TransformedLoss}) for any Lipschitz continuous $f$. Also, it takes only minor adaptations (see e.g.~\cite[Sec.~7]{nadtochiy_shkolnikov_2017} for the kind of changes that are needed) to allow for functions whose Lipschitz constant explodes when $x\uparrow1$ such as $f(x)=-\log(1-x)$ mentioned previously or e.g.~$f(x)=(1-x)^{-1}$.

For a discontinuous $f$, one would have to be more careful. In particular, a first simple observation is that if $f$ has a positive jump at $x_0 < 1$, then $L$ will have a blow-up at $t_0 = L^{-1}(x_0)$, however, if $f$ has a negative jump, then $L$ might still be continuous.

In any case, assuming there is a solution to \eqref{eq:BU_TransformedLoss} for a given function $f$, the following result presents the natural extension of \cite[Thm.~1.1]{hambly_ledger_sojmark_2018} to this problem.
\begin{prop}[Blow-up for transformed loss]
\label{BlowUp_Prop_BlowUpForTransformed}
Suppose $L$ gives a solution to (\ref{eq:BU_TransformedLoss}) for some function $f : [0,1) \to \mathbb{R}$. If we have
\[
m_0 < \alpha\Big( \int^{L_{t}}_0 f(x)dx + (1 - L_{t})f(L_{t}) \Big),
\]
at some time $t>0$, where $m_0:= \mathbb{E}X_0 $, then a blow-up must occur before or at this time $t$. 
\end{prop}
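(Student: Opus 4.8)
The plan is to adapt the moment method behind \cite[Thm.~1.1]{hambly_ledger_sojmark_2018} to the transformed loss setting. The natural object to track is the first moment of the absorbed subprobability measure, namely $m_t := \int_0^\infty x\,\nu_t(dx) = \mathbb{E}[X_t\mathbf{1}_{t<\tau}]$, and the aim is to show that if $L$ never blows up on $[0,t]$ then $m_t$ cannot have decreased past a certain threshold, contradicting the hypothesised inequality. First I would argue that, in the absence of a blow-up, the loss $L$ is continuous on $[0,t]$, so $s\mapsto f(L_s)$ is of bounded variation and continuous, and the dynamics $X_s = X_0 + B_s - \alpha f(L_s)$ can be handled by a clean application of It\^o's formula (or rather, the change-of-variables formula for the absorbed process). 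Applying this to $x\mapsto x$ and taking expectations, the Brownian term is a martingale and vanishes in expectation, leaving
\[
m_t = m_0 - \alpha\int_0^t f(L_s)\,dL_s - (\text{mass--weighted boundary flux term}),
\]
where the boundary term accounts for the fact that mass leaves the system at the origin carrying (essentially) zero spatial value. Because the value carried out at the origin is $0$, that flux term contributes nothing to $m_t$ directly, but it does force $m_t \ge 0$, which is the key one-sided bound.

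Next I would evaluate the drift integral. Integrating by parts (legitimate since $L$ is continuous and of bounded variation when there is no blow-up),
\[
\int_0^t f(L_s)\,dL_s = \int_0^{L_t} f(x)\,dx
\]
by the substitution $x = L_s$. Combining with $m_t \ge 0$ gives
\[
0 \le m_t \le m_0 - \alpha\int_0^{L_t} f(x)\,dx.
\]
This is almost the claimed bound but is missing the term $\alpha(1-L_t)f(L_t)$; to recover it one sharpens the estimate on the mass that has been absorbed at the origin. The point is that the absorbed mass, of total size $L_t$, did not leave with spatial value exactly $0$ at the absorption time but, more to the point, the surviving mass $1 - L_t$ sits at nonnegative positions, while the absorbed particles passed through $0$ having at earlier times been pushed down by the full drift $-\alpha f(L_s)$; tracking the contribution of the drift to the particles that survive gives an extra $-\alpha(1-L_t)f(L_t)$ when $f$ is monotone-controlled, or more robustly one compares $X_t$ to the process with drift frozen at its terminal value. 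Rearranging then yields $m_0 \ge \alpha\big(\int_0^{L_t} f(x)\,dx + (1-L_t)f(L_t)\big)$, contradicting the hypothesis, so a blow-up must have occurred at or before $t$.

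The main obstacle is making the boundary/flux bookkeeping rigorous: one must justify the change-of-variables formula for the process absorbed at $0$ (the measure $\nu_t$ rather than the unkilled law), control the local-time term at the origin, and correctly identify the sign and size of the absorbed-mass contribution so as to land exactly on $\int_0^{L_t} f + (1-L_t)f(L_t)$ rather than a weaker bound. The cleanest route is probably to avoid local time altogether by using the probabilistic representation directly: write $m_t = \mathbb{E}[X_0 + B_t - \alpha f(L_t)]\mathbf{1}_{t<\tau}] \ge 0$, bound $\mathbb{E}[(B_t)\mathbf{1}_{t<\tau}]$ via optional stopping (it is $\le 0$ since stopping a Brownian motion when $X$ hits $0$ biases $B$ downward — here one uses $X_s\le X_0 + B_s$ so $\tau$ is dominated by a hitting time depending only on $B$, making $\mathbb{E}[B_{t\wedge\tau}]\le 0$ delicate but tractable), and handle the $f(L_t)$ term using that $L$ is constant on $(\tau, t]$ for absorbed paths only in the no-blow-up regime; assembling these pieces reproduces the stated inequality. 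I expect the optional-stopping sign argument for the Brownian contribution, together with the need for $f$ bounded below (as flagged in the text) to ensure $L_t<1$ and keep all terms finite, to be where the real care is needed.
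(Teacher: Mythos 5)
Your overall strategy --- a moment/optional-stopping argument in the spirit of \cite[Thm.~1.1]{hambly_ledger_sojmark_2018} --- is the right one and matches the paper, but the central term $(1-L_t)f(L_t)$ is never actually derived, and the routes you sketch for obtaining it do not work. You correctly get $0\le m_0 - \alpha\int_0^{L_t}f(x)dx$ and then say the missing term comes from ``tracking the contribution of the drift to the particles that survive \dots when $f$ is monotone-controlled,'' or from comparison with a drift frozen at its terminal value; no monotonicity of $f$ is needed and no such comparison is required. Likewise, your fallback of truncating by $\mathbf{1}_{t<\tau}$ and arguing that $\mathbb{E}[B_t\mathbf{1}_{t<\tau}]\le 0$ is both the wrong quantity and the wrong sign heuristic (conditioning on survival biases the Brownian path \emph{upward}, not downward, given the negative drift); you flag this step yourself as ``delicate,'' which is where the proof is incomplete.

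The fix is short and avoids all boundary-flux and local-time bookkeeping. Assuming no blow-up on $[0,t]$, the path $s\mapsto X_s$ is continuous there, so $X_\tau=0$ on $\{\tau\le t\}$ and hence $X_{t\wedge\tau}\ge 0$. Take expectations in $X_{t\wedge\tau}=X_0+B_{t\wedge\tau}-\alpha f(L_{t\wedge\tau})$: the Brownian term vanishes by optional stopping at the bounded stopping time $t\wedge\tau$, and --- this is the step you are missing --- since $s\mapsto L_s$ is precisely the distribution function of $\tau$ with $L_\infty=1$ (which is where $f$ bounded below is used),
\begin{equation*}
\mathbb{E}\bigl[f(L_{t\wedge\tau})\bigr]=\int_0^\infty f(L_{t\wedge s})\,dL_s=\int_0^t f(L_s)\,dL_s+(1-L_t)f(L_t).
\end{equation*}
The term $(1-L_t)f(L_t)$ is simply the contribution of the event $\{\tau>t\}$, on which $L_{t\wedge\tau}=L_t$; it has nothing to do with the drift felt by surviving particles. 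Combining with $\mathbb{E}[X_{t\wedge\tau}]\ge 0$ and your (correct) substitution $\int_0^t f(L_s)\,dL_s=\int_0^{L_t}f(x)dx$, valid because $L$ is continuous and increasing on $[0,t]$, yields $m_0\ge\alpha\bigl(\int_0^{L_t}f(x)dx+(1-L_t)f(L_t)\bigr)$, contradicting the hypothesis.
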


\begin{proof}
For a contradiction, suppose that $X$ is continuous up to and including time $t$. Then, by stopping $X$ at the first time it reaches zero, we get
\[
0 \leq X_{t \wedge \tau} = X_0 + B_{t \wedge \tau} - \alpha f(L_{t \wedge \tau}),
\] 
since $L$ must be continuous up to this time, by our assumption. Taking expectations, and observing that $s\mapsto L_s$ gives the law of the stopping time $\tau$, we thus have
\[
0 \leq m_0 - \alpha \int^{\infty}_0 f(L_{t \wedge s}) dL_s = m_0 - \alpha \Big( \int^{t}_0 f(L_s) dL_s + (1-L_{t})f(L_{t}) \Big).
\]
The proof is now complete by noting that $L$ is of finite variation, since it is increasing, and continuous up to and including time $t$, by assumption, so the integral  on the right-hand side can be re-written as in the statement of the result.  
\end{proof}

By taking $f(x) = x$ and sending $t \to \infty$ in Proposition \ref{BlowUp_Prop_BlowUpForTransformed}, we recover \cite[Thm.~1.1]{hambly_ledger_sojmark_2018}. Notice that, as soon as the integral of $f$ over $(0,1)$ is positive, then for any initial condition we can take $\alpha>0 $ large enough to cause a blow-up. In addition, if $f$ is chosen so that its integral over $(0,1)$ is positively infinite (for example, $f(x) = (1-x)^{-1}$), then a blow-up is guaranteed for any initial condition and any feedback parameter $\alpha > 0$. Note, however, that the case $f(x) = -\log(1-x)$ from \cite{nadtochiy_shkolnikov_2017} has finite integral, so, in this case, Proposition \ref{BlowUp_Prop_BlowUpForTransformed} does not determine whether or not a blow-up must occur for any value of the (strictly positive) feedback parameter.

\subsection{The common noise model with $\rho > 0$}
\label{Sect_BU_Common}

For the idiosyncratic ($\rho=0$) model, the only parameters controlling blow-ups are the feedback strength, $\alpha$, and initial condition, $\nu_0$. In this regard, an important novelty in the common noise model ($\rho>0$) is that there are choices of initial condition for which the realisations of the common noise determine whether a blow-up occurs or not. Furthermore, as the solution $L$ is no longer deterministic, the occurrence of a blow-up is now a random event.

In the case of a Dirac initial condition $\nu_0=\delta_{x_0}$, Theorem \ref{Prop_BU_CommonNoiseBlowUp} says that, if the support $x_0>0$ is far enough from the origin, then we do \emph{not} have a blow-up in the idiosyncratic model, while there is always a non-zero probability of blow-up with common noise ($\rho > 0$).

The proof of this uses an explicit construction to show that a blow-up is forced if the sample path of $B^0$ is sufficiently negative so as to quickly transport the initial mass towards the boundary without loosing too much mass along the way.

With the same amount of work, the arguments can be phrased in a slightly more general way that also applies to a uniform distribution concentrated near a point $x_0$ away from the origin. The latter case is well suited for illustrative purposes, and so we rely on it for Examples  \ref{Example1} and \ref{Example2}  below. Both examples are accompanied by a simplified pictorial account in Figures  \ref{fig:the2ndfigure} and \ref{fig:the3rdfigure}, respectively. In addition to illustrating the interplay between the common noise and blow-ups, these figures also give an intuitive idea of the mechanisms underlying the proofs of Propositions \ref{Thm_avoid_blow-up} and  \ref{prop:Dirac_blow-up}. However, we stress that these figures are only meant as simplified `cartoons' (in particular, the axes are not in the same scale, and the shape and size of the densities are not precise).

\begin{figure}
	\begin{center}
		\includegraphics[width=\textwidth]{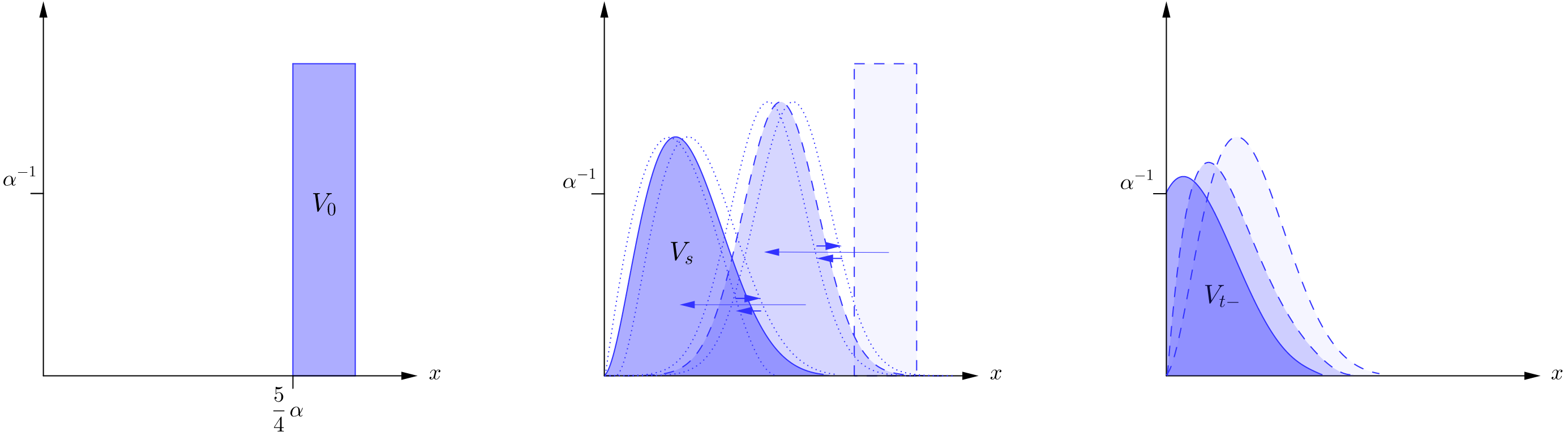}\caption{\label{fig:the2ndfigure} The figure displays the emergence of a blow-up. In the middle picture, the overall effect of the common noise is a prompt transportation of mass towards the origin, moving the system from $V_0$ to $V_s$. Since $V_s$ is concentrated near the origin, an adaptation of \cite[Thm.~1.1]{hambly_ledger_sojmark_2018} shows that there must be a blow-up, provided too much mass does not start escaping in the other direction. In this regard, the rightmost picture illustrates the nonlinear feedback becoming so strong that, after taking a limit as $s\uparrow t$, the resulting left-limit density $V_{t-}$ lies above $\alpha^{-1}$ near zero and hence $\Delta L_t \neq0$ in line with \eqref{eq:BU_PhysicalCondition}.}
	\end{center}
\end{figure} 

\begin{example}[Forcing blow-up]\label{Example1} Let the initial condition $\nu_0$ be a uniform distribution with density $V_0:=\delta^{-1}\mathbf{1}_{(c,c+\delta)}$ for constants $c\geq \frac{5}{4}\alpha$ and $0<\delta<\alpha$, as in the leftmost picture of Figure \ref{fig:the2ndfigure}. For such initial conditions, Proposition \ref{Prop_BU_LinearBounds} guarantees there is no blow-up in the idiosyncratic model. On the other hand, Proposition \ref{prop:Dirac_blow-up} below shows that there is a non-zero probability of blow-up in the common noise model. This happens when the common Brownian motion transports mass quickly towards the origin, thus creating a strong enough concentration of mass near zero so that a blow-up is forced to occur, essentially by comparison with \cite[Thm~1.1]{hambly_ledger_sojmark_2018} for the idiosyncratic model.
\end{example}

The previous example dealt with how the common noise can provoke a blow-up. In the next example, we consider the converse situation, illustrating how the common noise may prevent blow-ups, while starting from an initial condition for which the idiosyncratic model blows up.

\begin{example}[Averting blow-up]\label{Example2} This time, we let the initial condition $\nu_0$ be a uniform distribution with density $V_0:=\delta^{-1}\mathbf{1}_{(c-\delta,c)}$ for positive constants $c< \alpha$ and $\delta < c$ such that $c < \frac{1}{2}(\alpha+\delta)$, as in the leftmost picture of Figure \ref{fig:the3rdfigure}. Since $\int \!xd\nu_0(x)=c-\frac{1}{2}\delta<\frac{1}{2}\alpha$, the idiosyncratic model must have a blow-up for such initial conditions by \cite[Thm.~1.1]{hambly_ledger_sojmark_2018}. In contrast, it follows from Proposition \ref{Thm_avoid_blow-up} below that the common noise model has a strictly positive probability of never blowing up. Briefly, the common noise can keep the mass of the system sufficiently away from the origin, so that the diffusive effect can do its work and eventually rule out a blow-up from ever emerging, since the mass has become too dispersed. This is illustrated in Figure \ref{fig:the3rdfigure}.
\end{example}

\begin{figure}
	\begin{center}
		\includegraphics[width=\textwidth]{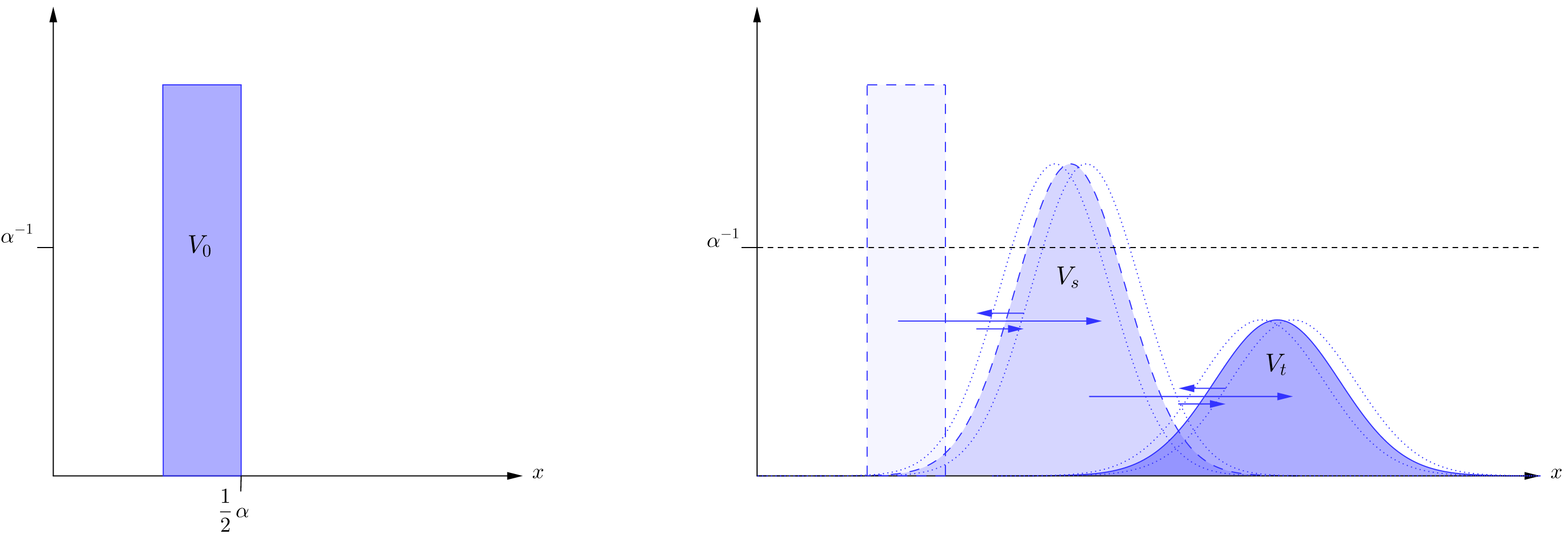}\caption{\label{fig:the3rdfigure} The figure displays an event where a blow-up is avoided, because the common noise keeps the mass away from the origin for an adequate amount of time. Specifically,  the picture on the right illustrates how the common noise can allow sufficient elbow room for the diffusive effect to spread out the mass, so that the density $V_t$ ends up lying everywhere strictly below $\alpha^{-1}$. From here, a blow-up cannot occur at any later times, for this realisation of the common noise, since we can no longer get into a situation like the rightmost picture of Figure \ref{fig:the2ndfigure}.}
	\end{center}
\end{figure}

 In the arguments that follow, we are interested in bounds on the loss of mass that are uniform over a given family of realisations of the common noise. This begins with a simple comparison argument.

\begin{lem}[No-crossing lemma]
	\label{Lem_BU_NoCrossing}
	Let $\nu_0$ be a probability measure supported on $[x_\star, \infty)$, for some $x_\star > 0$, and let $\tilde{f}, \bar{f} : [0,\infty) \to \mathbb{R}$ be two continuous deterministic functions satisfying $\tilde{f}(0), \bar{f}(0) > -x_\star$. Let $\tilde{L}$ and $\bar{L}$ be solutions to 
	\[
	\begin{cases}
	\tilde{X}_t = Z_0 + \sqrt{1-\rho^2} B_t + \tilde{f}(t) - \alpha \tilde{L}_t \\
	\tau = \inf\{ t \geq 0 : \tilde{X}_t \leq 0 \}  \\
	\tilde{L}_t = \mathbb{P}(\tilde{\tau} \leq t ),
	\end{cases}
	\quad
	\begin{cases}
	\bar{X}_t = Z_0 + \sqrt{1-\rho^2} B_t + \bar{f}(t) - \alpha \bar{L}_t \\
	\bar{\tau} = \inf\{ t \geq 0 : \bar{X}_t \leq 0 \}  \\
	\bar{L}_t = \mathbb{P}(\bar{\tau} \leq t),
	\end{cases}
	\]
	respectively, where $Z_0$ is distributed according to $\nu_0$. If $\tilde{f}(t) > \bar{f}(t)$ for all $t\in[0,t_0)$ and $\tilde{L}$ is continuous on $[0,t_0)$, then $\tilde{L}_t \leq \bar{L}_t$ for all $t \in [0,t_0)$.  If $\tilde{L}$ is also continuous at $t_0$, then $\tilde{L}_t \leq \bar{L}_t$ for all $t \in [0,t_0]$.
\end{lem}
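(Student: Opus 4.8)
The plan is a first-passage-time comparison that exploits the fact that the common Brownian motion $B$ and the common initial value $Z_0$ cancel when we subtract the two systems. Since $\tilde X$ and $\bar X$ are built from the \emph{same} $Z_0$ and the \emph{same} $B$, subtracting the defining equations kills these terms and leaves the pathwise identity
\[
\tilde X_t - \bar X_t \;=\; \bigl(\tilde f(t) - \bar f(t)\bigr) \;+\; \alpha\bigl(\bar L_t - \tilde L_t\bigr), \qquad t \ge 0,
\]
whose right-hand side is deterministic. So the whole statement reduces to a deterministic bootstrap on the two loss curves. First I would record that $\tilde L_0 = \bar L_0 = 0$: because $\nu_0$ is supported on $[x_\star,\infty)$ and $\tilde f(0),\bar f(0) > -x_\star$, both $\tilde X_0 = Z_0 + \tilde f(0)$ and $\bar X_0 = Z_0 + \bar f(0)$ are strictly positive almost surely, so $\tilde\tau, \bar\tau > 0$ a.s.\ and in particular $0$ lies in neither crossing set.

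Next, argue by contradiction. Let $C := \{t\in[0,t_0): \tilde L_t > \bar L_t\}$ and suppose $C\neq\emptyset$; set $\sigma := \inf C$, so that $\sigma < t_0$ since any element of $C$ bounds $\sigma$ from above. For $t<\sigma$ we have $t\notin C$, i.e.\ $\tilde L_t\le\bar L_t$; letting $t\uparrow\sigma$ and using continuity of $\tilde L$ on $[0,t_0)$ together with the monotonicity of $\bar L$ (hence existence of left limits) gives $\tilde L_\sigma\le\bar L_{\sigma-}\le\bar L_\sigma$. Thus $\tilde L_t\le\bar L_t$ on all of $[0,\sigma]$, and plugging this together with $\tilde f>\bar f$ on $[0,t_0)$ into the identity above yields $\tilde X_t(\omega) > \bar X_t(\omega)$ for every $t\in[0,\sigma]$ and every $\omega$.

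The heart of the argument is to push this strict ordering slightly past $\sigma$. Fix $h>0$ with $\sigma+h<t_0$. For $s\in[\sigma,\sigma+h]$, monotonicity of $\bar L$ and continuity of $\tilde L$ give
\[
\bar L_s - \tilde L_s \;\ge\; \bar L_\sigma - \tilde L_\sigma - \bigl|\tilde L_s - \tilde L_\sigma\bigr| \;\ge\; -\,\omega(h), \qquad \omega(h) := \sup_{|u-\sigma|\le h}\bigl|\tilde L_u - \tilde L_\sigma\bigr|,
\]
while $\tilde f-\bar f$, being continuous and strictly positive on the compact set $[\sigma,\sigma+h]\subset[0,t_0)$, is bounded below there by some $m>0$. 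Since $\omega(h)\to 0$ as $h\downarrow 0$ (uniform continuity of $\tilde L$ on a compact subinterval of $[0,t_0)$), I may fix $h$ so small that $\alpha\,\omega(h) < m$; then the identity forces $\tilde X_s(\omega) > \bar X_s(\omega)$ for all $s\in[\sigma,\sigma+h]$ and all $\omega$, hence on all of $[0,\sigma+h]$. Consequently, for each $s\le\sigma+h$ the event $\{\tilde\tau\le s\}$ is contained in $\{\bar\tau\le s\}$: if $\tilde X_u(\omega)\le 0$ for some $u\le s$, then $\bar X_u(\omega) < \tilde X_u(\omega)\le 0$, so $\bar\tau(\omega)\le u\le s$. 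Therefore $\tilde L_s = \mathbb{P}(\tilde\tau\le s)\le\mathbb{P}(\bar\tau\le s) = \bar L_s$ for all $s\in[0,\sigma+h]$, contradicting $\sigma = \inf C$. Hence $C=\emptyset$, i.e.\ $\tilde L_t\le\bar L_t$ on $[0,t_0)$; and if $\tilde L$ is continuous at $t_0$ as well, letting $t\uparrow t_0$ and using the left limit of $\bar L$ gives $\tilde L_{t_0}\le\bar L_{t_0}$.

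I expect the delicate point to be precisely this propagation past $\sigma$: one must exclude that $\tilde L$ overtakes $\bar L$ the instant after the two curves touch, and the only buffer there is the strict gap $\tilde f(\sigma)-\bar f(\sigma)>0$ combined with the continuity of $\tilde L$ (which makes $\tilde L$ locally near-constant). It is worth stressing that a jump of $\bar L$ at or near $\sigma$ (a blow-up of the $\bar L$-system) only \emph{helps}, since $\bar L$ is non-decreasing, so the possibly irregular behaviour of $\bar L$ requires no extra care; and that the cancellation identity makes the whole comparison genuinely pathwise in $\omega$, so the stopping-time inclusion $\{\tilde\tau\le s\}\subseteq\{\bar\tau\le s\}$ can be checked $\omega$ by $\omega$ before passing to probabilities.
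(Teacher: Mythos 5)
Your proof is correct and follows essentially the same route as the paper's: a pathwise comparison by contradiction, resting on the cancellation of $Z_0$ and $B$ and the resulting deterministic identity for $\tilde{X}-\bar{X}$, combined with the implication that $\tilde{X}\geq\bar{X}$ up to time $t$ plus continuity of $\tilde{L}$ forces $\tilde{L}_t\leq\bar{L}_t$. The only difference is organizational --- you locate the first failure of the ordering of the loss processes and push past it with a modulus-of-continuity estimate, whereas the paper locates the first crossing of the $X$-processes and reads off an immediate algebraic contradiction there --- but the underlying mechanism is identical.
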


\begin{proof} As in the statement, we have $\tilde{f}(t) > \bar{f}(t)$ for all $t\in[0,t_0)$, where the time $t_0>0$ is given, and we assume $\tilde{L}$ is continuous on $[0,t_0)$. Observe that if $\tilde{L}$ is continuous on $[0,t]$, for some $t>0$, and $\tilde{X}_s \geq \bar{X}_s$ for all $s < t$, then 
\begin{equation}
\label{eq:BU_NoCrossing}
	\tilde{L}_t = \tilde{L}_{t-} = \mathbb{P}( \, \inf_{s < t} \tilde{X}_s < 0 \, ) \leq \mathbb{P}( \, \inf_{s < t} \bar{X}_s < 0 \, ) = \bar{L}_{t-} \leq \bar{L}_{t}.
\end{equation}
	In particular, the claim of the lemma follows if only we can show that $\tilde{X}_t \geq \bar{X}_t$ for all $t \in [0,t_0)$. To this end, we let $t_\star$ be the first time $t$ such that $\tilde{X}_t \leq \bar{X}_t$. By definition of a solution, we have $ \tilde{L}_0=\bar{L}_0=0$, so the assumption $\tilde{f}(0) > \bar{f}(0)$ gives $\tilde{X}_0 > \bar{X}_0$. Combining this with the right-continuity of solutions, it follows that $t_\star > 0$.
	
	Now suppose, for a contradiction, that we have $t_\star < t_0$. Then $\tilde{L}$ is continuous on $[0,t_\star]$, by the assumption that it is continuous on $[0,t_0)$, so we also have continuity of  $\tilde{X}$ on $[0,t_\star]$. Noting that $\bar{X}$ only has downwards jumps ($\bar{L}$ has only upwards jumps), we must therefore have $\tilde{X}_{t_\star} = \bar{X}_{t_\star}$, and this forces
	\[
	\alpha(\bar{L}_{t_\star} - \tilde{L}_{t_\star}) = \bar{f}(t_\star) -  \tilde{f}(t_\star) < 0.
	\]
But this is a contradiction, because the left-hand side is non-negative by (\ref{eq:BU_NoCrossing}), since $L$ is continuous on $[0,t_\star]$ and $\tilde{X}_{t} > \bar{X}_t$ for $t < t_\star$, by construction.  This completes the proof.
\end{proof}

\subsubsection{Averting blow-ups}

Keeping in mind the case of a Dirac mass and the example in Figure \ref{fig:the3rdfigure}, we now turn to a precise construction of events on which blow-ups are excluded for all feedback parameters $\alpha>0$, when starting the system from a general initial condition $\nu_0$ supported away from the origin.

\begin{prop}[Non-certain blow-up]
	\label{Thm_avoid_blow-up} Let the initial condition $\nu_0$ be supported on $(x_\star,\infty)$, for some $x_\star>0$. Then the conditional McKean--Vlasov problem \eqref{MV} with common noise $(\rho>0)$ satisfies
	\[
	\mathbb{P}(L \emph{\textrm{ has a blow-up}}) < 1
	\]
	for all feedback parameters $\alpha > 0$.
\end{prop}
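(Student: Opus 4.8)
The plan is to build a single event $A$, measurable with respect to the common noise over a fixed compact time interval, with $\mathbb{P}(A)>0$ and such that the loss process has no blow-up on $A$; since then $\mathbb{P}(L\text{ has a blow-up})\le 1-\mathbb{P}(A)<1$, this suffices. The first observation is that blow-ups are confined to a deterministic time window. Indeed, rerunning the estimate in the proof of Lemma~\ref{density_process} while retaining the factor $1-\rho^2$ yields the deterministic bound $\Vert V_{t-}\Vert_\infty\le(2\pi(1-\rho^2)t)^{-1/2}$; hence for every $t>T_0:=\alpha^2/(2\pi(1-\rho^2))$ we get $\nu_{t-}([0,\alpha x])\le\alpha x\Vert V_{t-}\Vert_\infty<x$ for all $x>0$, and \eqref{eq:BU_PhysicalCondition} forces $\Delta L_t=0$; moreover $\Delta L_0=0$ because $\nu_{0-}=\nu_0$ charges no subset of $[0,x_\star]$. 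So it is enough to find $A$, depending only on $B^0|_{[0,T_0]}$, on which $\Delta L_t=0$ for all $t\in(0,T_0]$.

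I would take $A$ to be the event that the common noise first climbs and then stays high. Choose, \emph{in this order}, a small $\e\in(0,T_0]$ and a large $M>0$; let $\bar f:[0,\infty)\to\R$ be continuous with $\bar f(0)=-x_\star/2$, $\bar f\ge-x_\star/2$, and $\bar f\equiv M$ on $[\e,\infty)$; and set $A:=\{\rho B^0_t>\bar f(t)\text{ for all }t\in[0,T_0]\}$. Since $\bar f$ is continuous with $\bar f(0)<0=\rho B^0_0$, the full-support property of Brownian motion on $[0,T_0]$ gives $\mathbb{P}(A)>0$. On $A$ there are then two a priori inputs: for $t<\e$, a routine short-time bootstrap --- using $\rho B^0_t>-x_\star/2$, $X_0\ge x_\star$, and $\{\tau\le t\}=\{\inf_{s\le t}X_s\le 0\}$ --- keeps the accumulated loss below $x_\star/(4\alpha)$, so $\alpha L_{t-}\le x_\star/4$ there (this bootstrap is closed using the density estimate below to exclude an early jump of $L$; alternatively it can be phrased through the deterministic comparison of Lemma~\ref{Lem_BU_NoCrossing}, which applies here since $t\mapsto\rho B^0_t$ is deterministic given $B^0$); and for $t\in[\e,T_0]$ one simply uses $\rho B^0_t>M$ together with the trivial $L_{t-}\le1$.

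The final step is to estimate the density of $\nu_{t-}$ near the origin and run a dichotomy over these two regimes. From the proof of Lemma~\ref{density_process},
\[
V_{t-}(y)\le\int_0^\infty\frac{1}{\sqrt{2\pi(1-\rho^2)t}}\exp\!\Bigl(-\frac{\bigl(y-x_0-\rho B^0_t+\alpha L_{t-}\bigr)^2}{2(1-\rho^2)t}\Bigr)\,\nu_0(dx_0),
\]
with $x_0\ge x_\star$ and $\rho B^0_t>\bar f(t)$ on $A$. If $t<\e$, the bracket is at most $y-x_\star/4$, so for $y\in[0,x_\star/8]$ it is $\le-x_\star/8$ and $V_{t-}(y)\le(2\pi(1-\rho^2)t)^{-1/2}\exp(-x_\star^2/(128(1-\rho^2)t))$; this tends to $0$ as $t\downarrow0$, so taking $\e$ small makes it $<\alpha^{-1}$ for all $t\in(0,\e)$. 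If $t\in[\e,T_0]$, the bracket is at most $2\alpha-x_\star-M$, so for $M>2\alpha$ and $y\in[0,\alpha]$ we get $V_{t-}(y)\le(2\pi(1-\rho^2)\e)^{-1/2}\exp(-(M+x_\star-2\alpha)^2/(2(1-\rho^2)T_0))$, which is $<\alpha^{-1}$ once $M$ is large (depending on $\e$). In either case $V_{t-}<\alpha^{-1}$ on a right-neighbourhood of the origin, hence $\nu_{t-}([0,\alpha x])<x$ for all small $x>0$, and \eqref{eq:BU_PhysicalCondition} gives $\Delta L_t=0$. This shows there is no blow-up on $A$, completing the argument.

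I expect the genuinely delicate step to be the short-time regime $t<\e$ when $\alpha$ is large compared with $x_\star$: there the feedback $-\alpha L_t$ could a priori transport mass to the origin almost immediately, so no slack may be wasted, and --- as flagged above --- one must fix $\e$ small (to exploit simultaneously the $e^{-c/t}$ Gaussian decay near $0$ and the smallness of the a priori loss) \emph{before} fixing $M$ large. The only other point needing care is the interlock between the short-time loss bootstrap and the density estimate, together with the minor observation that Lemma~\ref{Lem_BU_NoCrossing}, although stated for deterministic drifts, applies here conditionally on $B^0$ because its proof is entirely pathwise.
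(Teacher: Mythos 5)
Your argument is correct and follows essentially the same route as the paper's proof: the same three-regime decomposition in time (a short initial interval where the noise stays near its barrier and the loss is bootstrapped via Lemma~\ref{Lem_BU_NoCrossing}, an intermediate interval where the noise is forced to be large so the Gaussian density bound from Lemma~\ref{density_process} kills the mass near the origin, and the deterministic regime after $\alpha^2/(2\pi(1-\rho^2))$), with the minimality constraint \eqref{eq:BU_PhysicalCondition} converting the density bounds into $\Delta L_t=0$. The only differences are cosmetic --- a single barrier event in place of the paper's tube-around-a-line event $\mathcal{A}_{t_0,\delta,m}\cap\mathcal{B}_{t_0,t_1,\alpha}$, and pointwise bounds $V_{t-}<\alpha^{-1}$ in place of $\nu_{t-}([0,\alpha x])\leq C_\star x$ --- and the ``interlock'' you flag is exactly the first-failure-time contradiction the paper carries out in detail.
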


\begin{proof}
	Let $x_\star>0$ be as in the statement of the proposition, and fix a $\delta>0$ such that $\delta\leq x_\star/4$. Then define $\bar{L}$ to be a solution to the idiosyncratic problem
	\[
\begin{cases}
\bar{X}_t = (X_0 - \delta) + \sqrt{1-\rho^2} B_t  - \alpha \bar{L}_t \\
\bar{\tau} = \inf\{ t \geq 0 : \bar{X}_t \leq 0 \}  \\
\bar{L}_t = \mathbb{P}(\bar{\tau} \leq t ),
\end{cases}
\]
noting that $X_0-\delta \geq x_\star-\delta >0$. By the right-continuity of $\bar{L}$ as well as $\bar{L}_0=0$, by our notion of solution, we then have $\bar{L}_t \to 0$ as $t \downarrow 0$. In particular, we can take $t_0 > 0$ small enough so that $\bar{L}_{t_0} \leq \tfrac{1}{4}x_\star \alpha^{-1}$. Since $L$ is increasing, we have  $x_\star-\alpha \bar{L}_t \geq \tfrac{3}{4}x_\star$ for $t\in[0,t_0]$, so an adaptation of the argument in Proposition \ref{Prop_BU_LinearBounds} shows that we can take $t_0>0$ small enough such that $\bar{L}$ is continuous on $[0,t_0]$. Indeed, this amounts precisely to the bound \eqref{eq:bound_bo_BU} below for $\rho=0$.

For a general $\rho\in(0,1)$, we can exploit that $B$ is independent of $B^0$ and $L$, with $L$ being $B^0$-measurable, and this way we obtain the bound
	\begin{align}
	\label{eq:BU_CommonNoiseBU_Escape}
	\nu_{t-}( \hspace{0.4pt} [ 0,\alpha x] \hspace{0.4pt} ) 
	&\leq \mathbb{P}(X_{t-} \in [ 0, \alpha x]\mid B^0) \nonumber \\
	&= \mathbb{P}(X_0 + \sqrt{1-\rho^2}B_t + \rho B^0_t - \alpha L_{t-} \in[ 0, \alpha x] \mid B^0 ) \nonumber \\
	&= \int^\infty_0 \int^{\alpha x}_0 \frac{1}{\sqrt{2\pi (1-\rho^2)t }} \exp\Big\{ - \frac{(z - x_0 - \rho B^0_t + \alpha L_{t-})^2}{2(1-\rho^2)t} \Big\}  dz d\nu_0(x_0),
	\end{align}
	for any $t>0$, where we have simply disregarded the absorption at the origin.
	
	For the rest of the proof, we restrict attention to the event $(\rho B^0_t)_{t\in[0,t_0]}\in \mathcal{A}_{t_0,\delta,m}$, where the latter is a family of paths defined by
	\[
	\mathcal{A}_{t_0,\delta,m}:= \bigl\{  f:[0,t_0]\rightarrow \mathbb{R} \;\text{s.t.}\; |f(t)-mt| \leq \delta \; \text{for all}\;t\in[0,t_0]  \bigr\},
	\]
	for a positive constant $m>0$ to be determined later. On this event, for any given $m>0$, we have 
	$-\rho B_t^0 \leq \delta \leq \frac{1}{4}x_\star$ for all $t\in[0,t_0]$. In order to also control the loss of mass, we stop $L$ before it exceeds $\frac{1}{4} x_\star \alpha^{-1}$, which amounts to looking at the stopping time
	\[
	\tau_0:=\inf \{ t\geq 0 : L_{t} >  x_\star \alpha^{-1} /4  \}.
	\]
	By the right-continuity of $L$ with $L_0=0$, we have $\tau_0>0$, and we also stress that $\tau_0$ of course inherits $B^0$-measurability from $L$.
	
	Since $\nu_0$ is supported on $(x_\star,\infty)$, we can deduce that, on the event $(\rho B_t^0)_{t\in[0,t_0]} \in \mathcal{A}_{t_0, m ,\delta}$, the bound \eqref{eq:BU_CommonNoiseBU_Escape} yields
	\begin{align*}
	\nu_{t\land \tau_0-}( \hspace{0.4pt} [ 0,\alpha x] \hspace{0.4pt} )
	& \leq \int^\infty_0 \int^{\alpha x}_0 \frac{1}{\sqrt{2\pi (1-\rho^2)(t\land \tau_0) }} \exp\Big\{ - \frac{(x_0 -3x_\star/4)^2}{2(1-\rho^2)(t\land \tau_0)} \Big\}  dz d\nu_0(x_0) \\
	& \leq \frac{\alpha }{ \sqrt{2\pi (1-\rho^2)(t\land \tau_0) } } \exp\Big\{ - \frac{x_\star^2}{32(1-\rho^2)(t\land \tau_0)} \Big\} \cdot x,
	\end{align*}
	for every $x<\frac{1}{4}x_\star \alpha^{-1}$. By lowering $t_0>0$ if necessary, it follows that there is a constant $C_\star<1$ such that
	\begin{equation}\label{eq:bound_bo_BU}
	\nu_{t\land \tau_0-}( \hspace{0.4pt} [ 0,\alpha x] \hspace{0.4pt} ) \leq C_\star x, \qquad \text{for all }t\leq t_0.
	\end{equation}
	
	Based on the above, we now argue that $L$ cannot have a blow-up on the full interval $[0,t_0]$, when restricting to the event $(\rho B_t^0)_{t\in[0,t_0]} \in \mathcal{A}_{t_0, m ,\delta}$. First of all, restricting to this event, the bound \eqref{eq:bound_bo_BU} and the minimality constraint \eqref{eq:BU_PhysicalCondition} already ensures that $L$ is continuous on $[0,\tau_0\land t_0]$ with $\tau_0\land t_0>0$. Now, if $L$ has a blow-up on $[0,t_0]$, then there must be a realisation of $(\rho B_t^0)_{t\in[0,t_0]} \in \mathcal{A}_{t_0, m ,\delta}$ and some time $s_0\in(0,t_0)$ such that $L$ is continuous on $[0,s_0)$ while failing to be continuous on any right-neighbourhood $[s_0,s)$ with $s>s_0$. Noting that
	\[
	\rho B^0_t \geq mt_0 -\delta > -\delta, \qquad \text{for all } t\in(0,t_0],
	\]
	on the event $(\rho B_t^0)_{t\in[0,t_0]} \in \mathcal{A}_{t_0, m ,\delta}$, so the conditions of the no-crossing lemma are satisfied. Therefore, the continuity of $L$ on $[0,s_0)\subseteq [0,t_0)$ gives
	\begin{equation}\label{eq:L-s0-bound}
	L_{s_0-}\leq \bar{L}_{t_0}\leq \frac{1}{4}x_\star \alpha^{-1}
	\end{equation}
	precisely by the no-crossing lemma (Lemma \ref{Lem_BU_NoCrossing}), and thus the bound \eqref{eq:bound_bo_BU} now reads as
	\[
	\nu_{s_0-}( \hspace{0.4pt} [ 0,\alpha x] \hspace{0.4pt} ) \leq C_\star x \qquad \text{for all } x \text{ sufficiently close to zero}.
	\]
	This, in turn, implies $\Delta L_{s_0}=0$, by the minimality constraint \eqref{eq:BU_PhysicalCondition}, and, in particular, we then get $L_{s_0}=L_{s_0-}\leq \frac{1}{4}x_\star \alpha^{-1}$ from \eqref{eq:L-s0-bound}. By right-continuity, it follows from here that, for any $\varepsilon>0$, we have  $L_{s}\leq \frac{1}{4}x_\star \alpha^{-1}+\varepsilon$ for $s>s_0$ close enough to $s_0$. Thus, we can conclude that there is a bound of the form $\nu_{s_0-}( \hspace{0.4pt} [ 0,\alpha x] \hspace{0.4pt} ) \leq C x$ with $C<1$, which continues to hold (for $x$ sufficiently close to zero) on a small enough right-neighbourhood of $s_0$, only we may need to take $C>C_\star$ (but of course still strictly less than one). This means that $L$ must remain continuous on $[s_0,s)$ for an $s>s_0$ close enough to $s_0$, and so it is indeed the case that, on the event $(\rho B_t^0)_{t\in[0,t_0]} \in \mathcal{A}_{t_0, m ,\delta}$, the loss $L$ does not have a blow-up on $[0,t_0]$.

	Given the above, we fix a later time $t_1>t_0$ by setting \[
	t_1:=\alpha^2/2\pi(1-\rho^2) + t_0.
	\]
	Since $t_0>0$, it follows from the bound in the proof of Lemma \ref{density_process} that
	\[
\nu_{t}([0,\alpha x])\leq \alpha(2\pi (1-\rho^2)t)^{-1/2} x < x, \qquad \text{for all } t\geq t_1,
	\]
for every $x>0$. Therefore, the minimality constraint \eqref{eq:BU_PhysicalCondition} gives us that $L$ cannot have a blow-up after time $t_\star$. It remains to ensure that there is no blow-up on the intermediate time interval $(t_0,t_\star)$. To this end, we can consider events where the realisation of the common Brownian motion belong to a family of paths of the form
		\[
	\mathcal{B}_{t_0,t_\star,\epsilon} :=\{ f:[t_0,t_\star] \to \mathbb{R} \;\;\textrm{s.t.}\;\; |f(t) - f(t_0)| \leq \epsilon \textrm{ for all } t \in [t_0, t_\star]  \}.
	\]
	The work so far has been independent of $m>0$, so we can certainly take it to be given by $m = t_0^{-1} K$, where we are free to choose the constant $K>0$. This choice of $m$ gives us that
	\[
	\rho B^0_t \geq K - x_\star - \delta -\alpha, \qquad \text{for all } t\in[t_0,t_1],
	\]
	On the event $(\rho B^0_t)_{t\in[0,t_1]} \in \mathcal{A}_{t_0, \delta, m} \cap \mathcal{B}_{t_0, t_1, \alpha}$. Insisting also on $K+x_\star>2(\alpha + \delta) $, we can therefore deduce from \eqref{eq:BU_CommonNoiseBU_Escape} that 
	\[
	\nu_{t-}( \hspace{0.4pt} [0, \alpha x] \hspace{0.4pt} ) \leq \frac{\alpha}{ \sqrt{2\pi (1-\rho^2) t_0}} \exp\Big\{ -\frac{(K+x_\star-2\alpha - 2\delta)^2}{2(1-\rho^2)t_1} \Big\} \cdot x,
	\]
	for every $x\in(0,\delta)$, for all $t\in[t_0,t_1]$, on the event $(\rho B^0_t)_{t\in[0,t_1]} \in \mathcal{A}_{t_0, \delta, m} \cap \mathcal{B}_{t_0, t_1, \alpha}$. Taking $K>0$ large enough, we can in turn ensure that $\nu_{t-}(\hspace{0.4pt}[0, \alpha x]\hspace{0.4pt}) < x$, for any $x>0$ near the origin, for all $t\in[t_0, t_1]$. By the minimal jump constraint \eqref{eq:BU_PhysicalCondition}, this means that $L$ cannot have a blow-up on $[t_0,t_1]$, and hence we conclude that
	\[
	0 < \mathbb{P}\bigl( (\rho B^0_s)_{s\in [0,t_1]} \in \mathcal{A}_{t_0,\delta,m } \cap \mathcal{B}_{ t_0, t_1, \alpha}  \bigr) \leq \mathbb{P}(L \textrm{ does not have a blow-up}),
	\]
	as desired. This completes the proof.
\end{proof}

\subsubsection{Provoking blow-ups}

The next result establishes the most interesting direction of part (ii) in  Theorem \ref{Prop_BU_CommonNoiseBlowUp}. Namely that, if \eqref{MV} is started from a fixed start point $X_0=x_0>0$, then there is a strictly positive probability of blow-up for all $\alpha>0$.
 While the former concerns Dirac initial conditions, the statement below is phrased slightly more generally in order to also include the initial conditions considered in Examples \ref{Example1} and  \ref{Example2} above.

\begin{prop}[Non-trivial  blow-up]\label{prop:Dirac_blow-up}
	\label{Thm_enforce_blow-up}  Let the initial condition $\nu_0$ be supported on $(x_\star,\infty)$, for some $x_\star>0$, and suppose we have $\int_{x_\star}^\infty (x-x_\star)\nu_{0}(dx) < \frac{1}{2}\alpha $.  In this case, the conditional McKean--Vlasov problem \eqref{MV} with common noise $(\rho>0)$ satisfies
	\[
	0<\mathbb{P}(L\;\mathrm{\emph{has a blow-up}})<1,
	\]
	for the given feedback parameter $\alpha>0$.
\end{prop}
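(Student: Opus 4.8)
The plan is to establish the two inequalities $\mathbb{P}(L \text{ has a blow-up}) < 1$ and $\mathbb{P}(L \text{ has a blow-up}) > 0$ separately. The upper bound is immediate: since $\nu_0$ is supported on $(x_\star,\infty)$ with $x_\star>0$, Proposition \ref{Thm_avoid_blow-up} already gives $\mathbb{P}(L \text{ has a blow-up}) < 1$ for every $\alpha>0$. So the entire substance of the proof lies in showing $\mathbb{P}(L \text{ has a blow-up}) > 0$, and here the hypothesis $\int_{x_\star}^\infty (x-x_\star)\,\nu_0(dx) < \tfrac{1}{2}\alpha$ is precisely what is needed to invoke the moment method of \cite[Thm.~1.1]{hambly_ledger_sojmark_2018} after the common noise has pushed the mass down by roughly $x_\star$.

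The core construction is to build an explicit event of positive probability on which the realisation of $B^0$ transports the bulk of the mass toward the origin quickly, without letting too much mass escape upward or be lost to absorption prematurely, and then apply a comparison/moment argument. Concretely, I would fix a small time $s>0$ and a large constant $K>0$, and work on the event $\mathcal{E} := \{ (\rho B^0_t)_{t\in[0,s]} \in \mathcal{A}'\}$ where $\mathcal{A}'$ forces $\rho B^0_t$ to be close to a steep negative linear path $-(x_\star/s)\,t$ (up to a small tolerance $\delta$), so that at time $s$ the shift $\rho B^0_s \approx -x_\star$. On $\mathcal{E}$, up to the stopping time $\tau_0 := \inf\{t : L_t > \eta\}$ for a small threshold $\eta$, the effective drift keeps the centre of mass near $x_0 - x_\star$ (or, for general $\nu_0$, shifts the support so the first moment above the origin is near $\int (x-x_\star)\nu_0(dx) < \tfrac12\alpha$), while the diffusive spreading over the short time $s$ and the loss $\alpha L_t \le \alpha\eta$ are both controlled. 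The key point is that the probability $\mathbb{P}(\mathcal{E})$ is strictly positive (small-ball / large-deviation type estimate for Brownian motion near a linear path), and $\mathcal{E}$ is $B^0$-measurable, hence compatible with the conditional structure of \eqref{MV}.

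The decisive step is then the following. Conditioned on a realisation $f \in \mathcal{A}'$, the process $X$ solves an idiosyncratic-type problem with deterministic drift $f$, and I want to run the moment argument of Proposition \ref{BlowUp_Prop_BlowUpForTransformed} / \cite[Thm.~1.1]{hambly_ledger_sojmark_2018} \emph{restarted from time $s$}: if I can show that the conditional sub-probability measure $\nu_{s}$ (on the event $\mathcal{E}$, or rather $\nu_{s\wedge\tau_0}$) has first moment strictly less than $\tfrac12\alpha$ times the remaining mass and is supported in $(0,\infty)$, then the usual supermartingale-at-the-boundary computation $0 \le \mathbb{E}[X_{t\wedge\tau} \mid B^0]$ forces a contradiction with $X$ being continuous for all time, hence a blow-up must occur on this event. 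To make the restart legitimate I would either (a) use the no-crossing lemma (Lemma \ref{Lem_BU_NoCrossing}) to compare with an idiosyncratic problem started from the shifted law, or (b) argue directly with conditional expectations given $B^0$, exploiting the $B^0$-measurability of $L$ and the independence of $B$. Either way, one has to be careful that the mass lost before time $s$ (bounded by $\eta$ via the choice of $\tau_0$ and the minimality constraint \eqref{eq:BU_PhysicalCondition}, just as in the proof of Proposition \ref{Thm_avoid_blow-up}) does not destroy the strict inequality $\int x\,\nu_{s\wedge\tau_0}(dx) < \tfrac12\alpha\,\nu_{s\wedge\tau_0}(0,\infty)$ — this is where choosing $\delta, \eta, s$ small enough relative to the slack $\tfrac12\alpha - \int(x-x_\star)\nu_0(dx) > 0$ comes in.

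The main obstacle I expect is \emph{quantitative bookkeeping of the mass budget}: one needs simultaneously that (i) the common-noise path pushes the support down by nearly $x_\star$ so the post-shift first moment is below $\tfrac12\alpha$, (ii) the short time $s$ is small enough that Brownian spreading of $B$ has not smeared mass across the origin (which would both lose mass and could even inadvertently satisfy the minimality constraint's no-blow-up criterion), yet (iii) $s$ is not so small that the linear-path event has vanishing probability — it never does, but the constants degrade — and (iv) the feedback loss $\alpha L_{t\wedge\tau_0}$ stays sub-$\eta$. Balancing (i)–(iv) is the crux; once the parameters are pinned down, the moment inequality and the strict positivity of $\mathbb{P}(\mathcal{E})$ combine routinely to give $\mathbb{P}(L \text{ has a blow-up}) \ge \mathbb{P}(\mathcal{E}) > 0$, and together with Proposition \ref{Thm_avoid_blow-up} this yields $0 < \mathbb{P}(L \text{ has a blow-up}) < 1$ as claimed.
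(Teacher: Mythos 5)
Your decomposition is exactly the paper's: the upper bound is quoted from Proposition \ref{Thm_avoid_blow-up}, and the lower bound is obtained by restricting to an event on which $\rho B^0$ tracks a steep negative linear ramp down to (roughly) $-x_\star$ over a short window $[0,s]$, controlling the mass lost during that window, and then running the moment method of \cite[Thm.~1.1]{hambly_ledger_sojmark_2018} conditionally on $B^0$, restarted at time $s$. The paper controls the early mass loss by the no-crossing comparison with an auxiliary idiosyncratic problem $\bar L$ rather than by your stopping time $\tau_0$, but these play the same role and either works.

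There is, however, one concrete ingredient missing from your event. Your $\mathcal{E}$ constrains $B^0$ only on $[0,s]$, whereas the restarted moment inequality reads
\[
\tfrac{1}{2}\alpha(L_t-\delta_0)(1-\delta_0) \;\leq\; \mathbb{E}[X_{s\wedge\tau}\,|\,B^0] \;+\; \rho\,\mathbb{E}[\Delta_{s,t}B^0_{\cdot\wedge\tau}\,|\,B^0],
\]
and the second term on the right is not controlled by anything in $\mathcal{E}$: if $B^0$ drifts back upward after time $s$, this term can be arbitrarily large and no contradiction arises (indeed it must not arise on every realisation, since the blow-up probability is strictly less than one). The paper therefore intersects with a second path event $\mathcal{B}_{t_0,t_1,\delta_0}$ forcing $|B^0_t-B^0_{t_0}|\leq\delta_0$ on $[t_0,t_1]$. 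Relatedly, because the contradiction must be reached by the finite horizon $t_1$ (not by sending $t\to\infty$ as in the idiosyncratic case), you also need a \emph{lower} bound showing $L_t$ grows enough on $[s,t_1]$; the paper gets this from another no-crossing comparison with an idiosyncratic problem whose loss tends to one. Both fixes live entirely inside your framework, so the gap is localized rather than structural, but as written the "routine" final step does not close.
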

\begin{proof} The first claim, that the probability of a blow-up is strictly less than one, is a consequence of Proposition \ref{Thm_avoid_blow-up}, so we only need to consider the second claim. In order to construct an event on which $L$ has a blow-up, we fix $x_\star>0$ as in the statement, and we then fix a small $\varepsilon_0>0$ such that $\int_{x_\star}^\infty (x-x_\star)\nu_{0}(dx) \leq \frac{1}{2}\alpha - \varepsilon_0 $. As in the proof Proposition \ref{Thm_avoid_blow-up}, we write
	\[
	\mathcal{A}_{t_0,\delta_0,m}= \bigl\{  f:[0,t_0]\rightarrow \mathbb{R} \;\text{s.t.}\; |f(t)-mt| \leq \delta \; \text{for all}\;t\in[0,t_0]  \bigr\},
	\]
	but this time we take
	\[
	m= -\frac{x_\star - \varepsilon_0/2}{t_0}\qquad \text{and}\qquad \delta_0 < \frac{\varepsilon_0}{8},
	\] 
	where $t_0>0$ and $\delta_0>0$ are to be determined. Furthermore, we let $\bar{L}$ be a solution to the idiosyncratic problem
	\[
	\begin{cases}
	\bar{X}_t = ( X_0+3\varepsilon_0/8 - x_\star) + \sqrt{1-\rho^2} B_t  - \alpha \bar{L}_t \\
	\bar{\tau} = \inf\{ t \geq 0 : \bar{X}_t \leq 0 \}  \\
	\bar{L}_t = \mathbb{P}(\bar{\tau} \leq t )
	\end{cases}
	\]
	where $X_0 $ is distributed according to $\nu_0$, which we recall is supported in $(x_\star,\infty)$ by assumption. Notice that, on the event $(\rho B^0_t)_{t\in[0,t_0]} \in \mathcal{A}_{t_0,m,\delta_0}$, we have
	\[
	\rho B^0_t \geq mt - \delta_0 >  3\varepsilon_0/8 -x_\star, \qquad \text{for all } t\in[0,t_0).
	\]
	Consequently, it follows from the no-crossing result of Lemma \ref{Lem_BU_NoCrossing} that either $L$ has a blow-up on $[0,t_0]$ or else $L$ is continuous on this interval and then $L \leq \bar{L}$ on $[0,t_0]$.
	
	In the former case, the claim is immediate, so for the rest of the proof we restrict to the latter case, where we then have
	\[
	L \leq \bar{L} \qquad \text{on } [0,t_0].
	\]
	Note that we are free to tune $t_0$ in the above. Since $L$ is right-continuous with $\bar{L}_0=0$, we have $\bar{L}_t \rightarrow 0$ as $t\downarrow 0$, so we can take $t_0$ small enough such that $\bar{L}_{t_0} \leq (1 \land \alpha^{-1})\delta_0$. On the event $(\rho B^0_t)_{t\in[0,t_0]} \in \mathcal{A}_{t_0, \delta_0,m}$, we therefore have
	\begin{equation}\label{eq:L-bound}
    L_{t}\leq (1 \land \alpha^{-1})\delta_0 \qquad \text{for all } t\in[0,t_0],
	\end{equation} by the above comparison with $\bar{L}$. Moreover, as $\bar{L}$ is increasing, we can always decrease $t_0>0$, so it is no loss of generality to insist on $t_0 \leq \delta_0^2$ for the remainder of the proof.
	
	In the remaining part of the proof, we modify the moment argument from \cite[Thm 1.1]{hambly_ledger_sojmark_2018} to show that a blow-up must occur after time $t_0$, provided the common Brownian motion stays close to its value at $t_0$. First of all, we can observe that, if $L$ is continuous up to and including some time $t_1>0$, then $X$ is also continuous there, so stopping $X$ at its first hitting time of the origin, namely $\tau$, we get
	\begin{equation}\label{eq:stop_X}
	0 \leq X_{t \wedge \tau} = X_{t_0 \wedge \tau} +  \sqrt{1-\rho^2} \Delta_{t_0, t} B_{\cdot \wedge \tau} + \rho  \Delta_{t_0, t} B^0_{ \cdot \wedge \tau} - \alpha \Delta_{t_0, t} L_{\cdot \wedge \tau},
	\end{equation}
	for $t\in[0,t_1]$, where we have used the notation $\Delta_{u,v}f := f(v) - f(u)$ for increments. Noting that $L$ is of finite variation and that it is precisely the distribution function for the hitting time $\tau$ conditional on $B^0$, we obtain the explicit expression
	\begin{align*}
	\mathbb{E}[\Delta_{t_0, t} L_{\cdot \wedge \tau} \,| \, B^0 ] &= \int_{t_0}^\infty  (L_{t \wedge s}  - L_{t_0})dL_s  \\
&= \frac{1}{2}(L_t - L_{t_0})^2 + (L_t - L_{t_0}) (1- L_t) \\
	& = \frac{1}{2}(L_t-L_{t_0})(2-L_t-L_{t_0}),
	\end{align*}
provided  $L$ is continuous on $[0,t]$, where we have also used that $L_s\rightarrow 1$ as $s\rightarrow \infty$, by comparison with Brownian motion. On the event $(\rho B^0_t)_{t\in[0,t_0]} \in \mathcal{A}_{t_0, \delta_0,m}$ we certainly have $L_{t_0} \leq \delta_0 $ by \eqref{eq:L-bound}, and of course $L \leq 1$, so taking conditional expectations in \eqref{eq:stop_X} and rearranging yields the inequality
	\begin{equation}
	\label{eq:BU_CommonNoiseBU_ModifiedMomentMethod}
	\frac{1}{2} \alpha (L_t-\delta_0)(1-\delta_0) \leq \mathbb{E} [X_{t_0 \wedge \tau} \,| \, B^0 ] + \rho \mathbb{E}[\Delta_{t_0, t} B^0_{ \cdot \wedge \tau} \,| \,B^0 ],
	\end{equation}
	for $t>t_0$, on the aforementioned event, provided $L$ is continuous there.
	
	Now fix a time $t_1>t_0$, to be determined, and consider the event  $(\rho B^0_t)_{t\in[0,t_1]} \in \mathcal{A}_{t_0, \delta_0,m} \cap \mathcal{B}_{t_0, t_1, \delta_0}$, where we recall the definition
	\[
	\mathcal{B}_{t_0, t_1, \delta} = \{ \, f:[0,t_1] \to \mathbb{R} \textrm{ s.t. } |f(t) - f(t_0)| \leq \delta \textrm{ for all } t \in [t_0, t_1]  \, \}.
	\]
	We will show that $L$ cannot be continuous on $[0,t_1]$ for any realisation of $B^0$ on the event $(\rho B^0_t)_{t\in[0,t_1]} \in \mathcal{A}_{t_0, \delta_0,m} \cap \mathcal{B}_{t_0, t_1, \delta_0}$, as this would lead to a contradiction based on \eqref{eq:BU_CommonNoiseBU_ModifiedMomentMethod}. To estimate the first term on the right-hand side of \eqref{eq:BU_CommonNoiseBU_ModifiedMomentMethod}, notice that continuity of $X$ implies
	\[
	X_{t_0 \wedge \tau}
	\leq |X_{t_0}|
	\leq |X_{0} + \rho B^0_{t_0} | + \sqrt{1-\rho^2} \, |B_{t_0}| + \alpha L_{t_0}.
	\]
	Furthermore, we have $\alpha L_{t_0} \leq \delta_0$ on the event $(\rho B^0_t)_{t\in[0,t_1]} \in \mathcal{A}_{t_0, \delta_0,m}$ by \eqref{eq:L-bound}, and a simple computation gives $\mathbb{E}[|B_{t_0}| \,| \, B^0]=\sqrt{2t_0/\pi}$ as $B$ is independent of $B^0$. Therefore,
	\[
	\mathbb{E}[X_{t_0 \wedge \tau} \,| \, B^0 ]
	\leq \mathbb{E}[|X_{0} + \rho B^0_{t_0} |  \,| \, B^0 ] + 2\delta_0,
	\]
	since we fixed $t_0 \leq \delta_0^2$. Next, we can observe that, on the event  $(\rho B^0_t)_{t\in[0,t_0]} \in \mathcal{A}_{t_0, \delta_0, m}$, we have $X_0 + \rho B^0_{t_0} > 0$ and $\rho B^0_{t_0} \leq -x_\star + \varepsilon_0/2 + \delta_0$, so we get
	\[
	\mathbb{E}[|X_{0} + \rho B^0_{t_0} |  \,| \, B^0 ] \leq \int_{x_\star}^\infty(x-x_\star)\nu_0(dx) + \varepsilon_0/2 + \delta_0 \leq \frac{1}{2}\alpha -  \varepsilon_0/2 + \delta_0.
	\]
	and hence
	\[
	\mathbb{E}[X_{t_0 \wedge \tau} \,| \, B^0 ]
	\leq  \frac{1}{2}\alpha -  \varepsilon_0/2 + 3\delta_0.
	\]
	Concerning the second term on the right-hand side of \eqref{eq:BU_CommonNoiseBU_ModifiedMomentMethod}, we of course have
	\[
	\rho \mathbb{E}[\Delta_{t_0, t} B^0_{ \cdot \wedge \tau} \,| \,B^0 ] \leq \delta_0
	\]
	on the event $(\rho B^0_t)_{t\in[t_0,t_1]} \in \mathcal{B}_{t_0, t_1, \delta_0}$, and thus the continuity of $L$ on $[0,t_1]$ would imply that the inequality (\ref{eq:BU_CommonNoiseBU_ModifiedMomentMethod}) becomes
	\begin{equation}
	\label{eq:BU_CommonNoiseBU_Contradiction}
	\frac{1}{2} \alpha (L_t-\delta_0)(1-\delta_0) 
	\leq \frac{1}{2}\alpha -  \varepsilon_0/2 + 4\delta_0 ,
	\end{equation}
	for $t\in[0,t_1]$, on the event $(\rho B^0_t)_{t\in[0,t_1]} \in \mathcal{A}_{t_0, \delta_0,m} \cap \mathcal{B}_{t_0, t_1, \delta_0}$.
	
	It remains to utilize that we are free to tune the variable $\delta_0\in (0, \varepsilon_0/8)$. In view of Lemma \ref{Lem_BU_NoCrossing}, we can bound $L$ from below on $[0,t_1]$, on the given event, by an idiosyncratic problem whose loss of mass goes to one when time goes to infinity, since Brownian motion hits every level almost surely. Therefore, we can certainly take $\delta_0>0$ small enough and $t_1>t_0$ large enough in order to get a contradiction in  \eqref{eq:BU_CommonNoiseBU_Contradiction}, as soon as $L$ is continuous on $[0,t_1]$ for any given realisation of the common noise contained in the event $(\rho B^0_t)_{t\in[0,t_1]} \in \mathcal{A}_{t_0, \delta_0,m} \cap \mathcal{B}_{t_0, t_1, \delta_0}$. In other words, we have 
	\[
	0 < \mathbb{P}( (\rho B^0)_{t\in[0,t_1]} \in \mathcal{A}_{t_0, \delta_0,m} \cap \mathcal{B}_{t_0, t_1, \delta_0}) \leq \mathbb{P}(L \textrm{ has a blow-up} ),
	\]
	as desired, and thus the proof is complete.
\end{proof}

We end this section by concluding the proof of Theorem \ref{Prop_BU_CommonNoiseBlowUp}. First, we can recall that part (i) of the theorem is an immediate consequence of Proposition \ref{Prop_BU_LinearBounds} from the previous subsection. Next, we can note that, for any given $\alpha>0$, every Dirac initial condition $\nu_0=\delta_{x_0}$ with $x_0>0$ satisfies the conditions of the above Proposition \ref{prop:Dirac_blow-up}. Indeed, we can simply take $x_\star:=x_0-\varepsilon$, for a small $\varepsilon>0$, and then we have $\int_{x_\star}^\infty (x-x_\star)\nu_0(dx)=\varepsilon <\frac{1}{2}\alpha$, so part (ii) of Theorem \ref{Prop_BU_CommonNoiseBlowUp} follows from Proposition \ref{prop:Dirac_blow-up}.


\section{From particles to the mean-field} 
\label{Sect_Particle}
In what follows we will show how solutions to a `relaxed' version of (\ref{MV}) emerge as the limit points of a finite-dimensional particle system with positive feedback. It is this particle system that motivates the applications described in the introduction, and hence it is a salient point that `relaxed' solutions to (\ref{MV}) indeed describe the large population mean-field limit. 

\subsection{The finite particle system}

For the rest of Section \ref{Sect_Particle} we shall be interested in characterizing the large population limit of the particle system
\begin{equation}
\label{eq:Particle_System}
\begin{cases}
dX_{t}^{i,N}=b(t,X_{t}^{i,N})dt+\sigma(t)\bigl(\sqrt{1-\rho(t)^{2}}dB_{t}^{i}+\rho(t)dB_{t}^{0}\bigr)-\alpha dL_{t}^{N} \\[3pt]
L_{t}^{N}=\displaystyle{\frac{1}{N} \sum_{i=1}^{N}}\mathbf{1}_{t\geq\tau^{i,N}}\;\;\;\text{with}\;\;\;\tau^{i,N}=\inf\{t>0:X_{t}^{i,N}\leq0\},
\end{cases}
\end{equation}
for $i=1,\ldots,N$, where $B^{0},\ldots,B^{N}$ is a family of independent Brownian motions, and $X_0^{1},\ldots,X_0^{N}$ are i.i.d.~copies of a given initial condition $X_0$ (independent of the Brownian motions). Here the coefficients for the drift, volatility, and correlation of the diffusive dynamics are given by deterministic functions $(t,x)\mapsto b(t,x)$, $t\mapsto \sigma(t)$, and $t\mapsto \rho(t)$, respectively, with the basic formulation \eqref{MV} corresponding to constant coefficients $b= 0$, $\sigma = 1$, and $\rho\in [0,1)$.

Effectively, one should think of the particles in \eqref{eq:Particle_System} as being absorbed when they first reach (or jump below) the origin, at which point they contribute to the current value of the loss process $L^N$. However, to avoid clouding the notation unnecessarily, we define the particles globally by the dynamics in \eqref{eq:Particle_System} for all times $t\geq 0$.

Some care has to be taken in specifying what is meant by $L^N$ in (\ref{eq:Particle_System}). Clearly, it only jumps when $X^{j,N}_{t-}=0$ for some particle $j$ that has not already crossed the origin at an earlier time (in other words, the particle $j$  was previously unabsorbed). However, if taking $\Delta L_t^N=1/N$ means that $X_{t-}^{i,N}-\alpha\Delta L_t^N \leq0$ for  more previously unabsorbed particles $i\neq j$, then these particles also jump to or below the origin at time $t$, hence contributing further to $L^N$, and so forth. That is, a cascade is initiated, which is resolved by demanding that $\Delta L_t^N$ equals the smallest $k/N$ such that a downward shift of the particles by $\alpha k/N$  results in no more than $k$ previously unabsorbed particles reaching (or crossing) the origin at time $t$, based on the positions of the particles at `time $t-$'.

The above can be formulated more succinctly as saying that we require $L^N$ to be the unique piecewise constant c\`adl\`ag process satisfying
\begin{equation}\label{eq:discrete-jump-condition}
\Delta L^N_t = \inf \bigl\{ \tfrac{k}{N} \geq 0 : \boldsymbol{\nu}^N_{t-}\bigl(\hspace{0.4pt}[0,\alpha \tfrac{k}{N}]\hspace{0.4pt}\bigr) \leq \tfrac{k}{N} \bigr\}, \quad \text{for all } t\geq 0,
\end{equation}
where we have introduced the finite-dimensional flows of sub-probability measures
\begin{equation}\label{eq:finite_flow}
 \boldsymbol{\nu}_{t}^{N}:=\frac{1}{N}\sum_{i=1}^{N}\mathbf{1}_{t<\tau^{i,N}}\delta_{X_{t}^{i,N}},
\end{equation} for each $N\geq 1$, which have left-limits
\[
\boldsymbol{\nu}_{t-}^{N}:=\frac{1}{N}\sum_{i=1}^{N}\mathbf{1}_{t\leq\tau^{i,N}}\delta_{X_{t-}^{i,N}}.
\]
The condition \eqref{eq:discrete-jump-condition},  of course, is nothing but the discrete analogue of the physical jump condition presented in (\ref{eq:BU_PhysicalCondition}).

From here, the existence and uniqueness of the finite particle system (\ref{eq:Particle_System}) becomes immediate, under  Assumption \ref{Assumptions}, as a unique strong solution can be constructed recursively on the intervals of constancy for $L^N$ with \eqref{eq:discrete-jump-condition}-\eqref{eq:finite_flow} dictating when and by how much it jumps.

We note that we have included a spatial drift $(t,x)\mapsto b(t,x)$ with up to linear growth in space to e.g.~allow for Ornstein--Uhlenbeck type dynamics, as this is of interest for the motivating financial \cite{hambly_sojmark_2017} and neuroscience \cite{brunel2000} applications. Our arguments for such a drift extend immediately to a drift also depending on the empirical mean in a Lipschitz way, but we leave this out for simplicity of presentation.

Throughout what follows, we let $D_{\mathbb{R}}$ be the space of real-valued c\`adl\`ag paths, and we let $\hat{\eta}$ denote the canonical process on $D_{\mathbb{R}}$ with $\hat{\tau}$ denoting its first hitting time of zero, that is, $\hat{\tau}=\inf \{t\geq 0:\hat{\eta}_t\leq 0 \}$. In particular, we can then write
\[
L_t^N=\mathbf{P}^N( \hat{\tau} \leq t) \qquad \text{and} \qquad \boldsymbol{\nu}_t^N=\mathbf{P}^N(\hat{\eta} \in \hspace{-1pt} \cdot \hspace{1pt}, \hspace{1pt} t<\hat{\tau}),
\]
where
\begin{equation}\label{eq:empirical_measure}
\quad \mathbf{P}^N:= \frac{1}{N} \sum_{i=1}^N \delta_{X^{i,N}_\cdot}, \qquad \text{for } N\geq1,
\end{equation}
noting that the empirical measures, $\mathbf{P}^N$, should be understood as random probability measures on $D_{\mathbb{R}}$. In terms of notation, we will consistently use the boldface font (e.g.~$\mathbf{P}$ and $\boldsymbol{\nu}$) for \emph{random} measures, whereas the blackboard font (e.g.~$\mathbb{P}$ and its expectation operator $\mathbb{E}$) is reserved for fixed background spaces.

\subsection{Limit points of the particle system}

Heuristically, one could hope that, along a subsequence and in a suitable topology, the pair $(\mathbf{P}^N,B^0)$ will converge in law to some limit $(\mathbf{P}^*,B^0)$, where $B^0$ is again a Brownian motion and $\mathbf{P}^*=\text{Law}(X\,|\, B^0)$ is the conditional law of $X$ given $B^0$, for a c\`adl\`ag process $X$ satisfying (\ref{MV}) with $L_t=\mathbf{P}^*(\hat{\tau} \leq t)$. However, the idea that $\mathbf{P}^*$, and hence also $L$, should automatically turn out to be $B^0$-measurable is asking too much of the weak limit point $(\mathbf{P}^*,B^0)$. Anyhow, this property can be relaxed vis-\`a-vis the original formulation of (\ref{MV}) without altering the essential features of the problem. We define this relaxation next, noting that it is an essential part of our endeavours to recover (\ref{MV}) as a large population limit of the finite system (\ref{eq:Particle_System}). 

\begin{defn}[Relaxed solutions]\label{def:rMV}
Let the coefficient functions $b(t,x)$, $\sigma(t)$, and $ \rho(t)$ be given along with the initial condition $X_0 \sim \nu_0$ and feedback parameter $\alpha >0$. Then we define a relaxed solution to (\ref{MV}) as a family $(X,B,B^0,\mathbf{P})$ on a probability space $(\Omega, \mathbb{P},\mathcal{F})$ of choice such that
\begin{equation}
\begin{cases}
\label{eq:Relaxed McKean-Vlasov} \tag{rMV}
dX_{t}=b(t,X_{t})dt+\sigma(t)\bigl(\sqrt{1-\rho(t)^{2}}dB_{t}+\rho(t)dB_{t}^{0}\bigr)-\alpha dL_{t}\\[2pt]
L_t=\mathbb{P}(\tau\leq t \, | \, B^0 , \mathbf{P}) \;\; \text{and} \;\; \mathbf{P}=\text{Law}(X\,| \, B^{0},\mathbf{P}), \\[2pt]
\tau = \inf \{ t>0 : X_t \leq 0 \} \;\; \text{and} \;\; (B^{0},\mathbf{P})\perp B,
\end{cases}
\end{equation}
with $L_0=0$, $X_0 \sim \nu_0 $ and $X_0 \perp (B,B^0, \mathbf{P})$, where $(B, B^0)$ is a $2d$ Brownian motion, $X$ is a c\`adl\`ag process, and $\mathbf{P}$ is a random probability measure on the space of c\`adl\`ag paths $D_\mathbb{R}$. By the notation $\emph{\text{Law}}(X\,| \, B^{0},\mathbf{P})$, we mean the conditional law of $X$ given $(B^0,\mathbf{P})$ which indeed defines a random $(B^0,\mathbf{P})$-measurable probability measure on $D_\mathbb{R}$.
\end{defn}

We stress that the fixed point type constraint $\mathbf{P}=\text{Law}(X\,| \, B^0, \mathbf{P})$ is a crucial part of our relaxed notion of solution, which characterizes the extra information in the conditioning and  encodes the fact that $L_t=\mathbf{P}(\hat{\tau} \leq t)$, as should be required for a limit point $\mathbf{P}$ of $\mathbf{P}^N$ with $L^N=\mathbf{P}^N(\hat{\tau} \leq t)$. Observe also that, if the random measure $\mathbf{P}$ is $B^0$-measurable, then the above simplifies to the exact form of (\ref{MV}). In the absence of $B^0$-measurability, however, the independence of $(\mathbf{P},B^0)$ and $B$ constitutes the most important point, which allows \eqref{eq:Relaxed McKean-Vlasov} to retain all the essential aspects of (\ref{MV}). In particular, this independence ensures that the results from Section 2 are equally valid for \eqref{eq:Relaxed McKean-Vlasov}, by conditioning on the pair $(\mathbf{P},B^0)$ whenever we conditioned on $B^0$. The nature of relaxed solutions and their agreement with (\ref{MV}) is explored further in  Section \ref{subsec:SPDE}. 
 
We now present the main result of this section, showing that solutions to \eqref{eq:Relaxed McKean-Vlasov} arise as limit points of the finite particle system \eqref{eq:Particle_System}.

\begin{thm}[Existence and convergence] \label{Thm:Existence}
Suppose Assumption \ref{Assumptions} below is satisfied, and let $\mathbf{P}^N$ be the empirical measure \eqref{eq:empirical_measure} for the finite particle system \eqref{eq:Particle_System}. Then any subsequence of the pair $(\mathbf{P}^N,B^0)$ has a further subsequence, which converges in law to some $(\mathbf{P},B^0)$, where $B^0$ is a Brownian motion and $\mathbf{P}$ is a random probability measure $\mathbf{P}:\Omega \rightarrow \mathcal{P}(D_{\mathbb{R}})$. Given this limit $(\mathbf{P},B^0)$, there is a background space $(\Omega,\mathcal{F},\P)$, which carries another Brownian motion $B$ and a stochastic process $X$ such that $(X,B,B^0,\mathbf{P})$ is a solution to \eqref{eq:Relaxed McKean-Vlasov} according to Definition \ref{def:rMV}. Moreover, if we define
\begin{equation}\label{eq:fat_nu}
	\boldsymbol{\nu}_t:=\mathbf{P}(\hat{\eta}_{t} \in \cdot\, , \, t<\hat{\tau} )=\mathbb{P}(X_{t}\in \cdot\, , \, t<\tau \,| \,B^{0},\mathbf{P}),
\end{equation}
then we have the minimality constraint
\begin{equation}\label{thm_PJC}
\Delta L_{t}=\inf\{ x\geq0:\boldsymbol{\nu}_{t-}(\hspace{0.4pt}[0,\alpha x]\hspace{0.4pt})<x\},
\end{equation}
a.s., for all $t\geq 0$, which determines the jump sizes of $L$.
\end{thm}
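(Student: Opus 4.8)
The plan is to establish Theorem \ref{Thm:Existence} in three stages: tightness and extraction of a limit, identification of the limit as a solution to \eqref{eq:Relaxed McKean-Vlasov}, and finally verification of the minimality constraint \eqref{thm_PJC}.

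\textbf{Stage 1: Tightness and extraction.} First I would establish tightness of the laws of $(\mathbf{P}^N, B^0)$ on $\mathcal{P}(D_\mathbb{R}) \times C(\mathbb{R}_{\geq 0})$. Since $B^0$ has a fixed law, only $\mathbf{P}^N$ requires work. The standard route (following the ideas in \cite{DIRT_SPA} referred to in the introduction) is to show tightness of the second-moment measures $\mathbb{E}[\mathbf{P}^N]$ on $D_\mathbb{R}$ via an Aldous-type criterion: each particle $X^{i,N}$ is a Brownian motion with bounded-growth drift minus the nondecreasing process $\alpha L^N$, and since $L^N$ is uniformly bounded by $1$ and nondecreasing, one gets uniform modulus-of-continuity control away from jump times, while the jumps of $L^N$ are controlled by the total absorbed mass. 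Combined with a compact-containment estimate coming from the up-to-linear growth of $b$ and boundedness of $\sigma$, this gives tightness of $(\mathbf{P}^N)$ as a sequence of random measures (via tightness of the mean measures plus an exchangeability/propagation argument, or directly since $\mathcal{P}(D_\mathbb{R})$ tightness reduces to tightness of intensities here). Extract a subsequence along which $(\mathbf{P}^N, B^0) \Rightarrow (\mathbf{P}, B^0)$ in law, and invoke Skorokhod representation to work with almost sure convergence on a common space.

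\textbf{Stage 2: Identification.} On the Skorokhod space, I would build the background space of Definition \ref{def:rMV} by taking $X$ to be the canonical process $\hat\eta$ sampled from $\mathbf{P}$, together with an independent Brownian motion $B$, and set $L_t = \mathbf{P}(\hat\tau \leq t)$. The fixed-point identity $\mathbf{P} = \mathrm{Law}(X \mid B^0, \mathbf{P})$ holds by construction of $X$; the independence $(B^0,\mathbf{P}) \perp B$ is imposed; and $X_0 \perp (B,B^0,\mathbf{P})$ follows from the i.i.d. initial conditions in \eqref{eq:Particle_System}. The substantive point is that the limiting dynamics hold, i.e. that $\mathbf{P}$-almost every path satisfies the SDE in \eqref{eq:Relaxed McKean-Vlasov} with the \emph{same} $L$. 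This is where the martingale-problem machinery enters: one shows that under $\mathbf{P}$, the canonical process solves the martingale problem associated with the operator $\tfrac12 \sigma(t)^2 \partial_{xx} + b(t,x)\partial_x$ for the process killed at $\hat\tau$ and driven additively by $\rho(t)\sigma(t)\,dB^0_t - \alpha\,dL_t$, passing to the limit in the prelimit martingale problem for $\mathbf{P}^N$. The delicate ingredient is continuity of the map $\mathbf{P} \mapsto L = \mathbf{P}(\hat\tau \leq \cdot)$, since $\hat\tau$ is \emph{not} a continuous functional on $D_\mathbb{R}$ — here one uses that crossing times of a nondegenerate diffusion are a.s. continuity points, plus the usual argument that the limit puts no mass on paths touching zero without crossing. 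I would also need to check that $L$ and $\boldsymbol\nu$ as defined in \eqref{eq:fat_nu} are genuinely $(B^0,\mathbf{P})$-measurable, which is immediate since they are deterministic functionals of $\mathbf{P}$.

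\textbf{Stage 3: The minimality constraint.} Finally, for \eqref{thm_PJC}, I would pass to the limit in the discrete physical jump condition $\Delta L^N_t = \inf\{k/N \geq 0 : \boldsymbol\nu^N_{t-}([0,\alpha k/N]) \leq k/N\}$. One direction — that any limiting jump must satisfy $\Delta L_t \leq \inf\{x>0 : \boldsymbol\nu_{t-}([0,\alpha x]) < x\}$, i.e. the jump is no larger than the minimal admissible one — should follow because if $\boldsymbol\nu_{t-}([0,\alpha x]) < x$ strictly, then the discrete condition would already have stopped the cascade near level $x$ in the prelimit, using convergence of $\boldsymbol\nu^N_{t-} \to \boldsymbol\nu_{t-}$ at continuity points. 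The other direction — that $\Delta L_t$ is at least the minimal admissible jump, equivalently that $\boldsymbol\nu_{t-}([0,\alpha \Delta L_t]) \geq \Delta L_t$, which is essentially \eqref{eq:jump_size} — comes from the fact that the prelimit cascade does not stop before the fixed point is reached, so mass genuinely equal to the jump is absorbed. \textbf{I expect Stage 3, together with the non-continuity of $\hat\tau$ in Stage 2, to be the main obstacle}: controlling the left-limits $\boldsymbol\nu^N_{t-}$ uniformly and ruling out pathological behaviour of the prelimit cascades in the limit (e.g. the limiting jump being strictly between the minimal one and something larger, or mass being lost at the boundary without being counted) requires care, and is precisely the point where the minimality selection principle \eqref{eq:BU_PhysicalCondition} has to be shown to be inherited rather than merely \eqref{eq:jump_size}. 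The density bound from Lemma \ref{density_process} applied to $\boldsymbol\nu$ (valid for relaxed solutions by conditioning on $(\mathbf{P},B^0)$) should help tame the left-limits at positive times.
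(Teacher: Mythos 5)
Your overall architecture (tightness, identification via a martingale problem, then the jump condition) matches the paper's, but two of your steps would not go through as written.

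First, in Stage 2 you take ``$X$ sampled from $\mathbf{P}$ together with an independent Brownian motion $B$'' and assert that the independence $(B^0,\mathbf{P})\perp B$ ``is imposed''. This is backwards: $B$ cannot be adjoined, because it is the idiosyncratic driver of $X$ and is therefore recovered, after a deterministic time change, from the martingale $\mathcal{M}$ of Proposition \ref{prop:martingales} once the stochastic integral against $B^0$ is subtracted. Its independence from $(\mathbf{P},B^0)$, and the independence of $X_0$ from $(B,B^0,\mathbf{P})$, are theorems rather than choices; they encode the averaging-out of the idiosyncratic noises in the limit and are exactly what makes $L_t=\mathbb{P}(\tau\leq t\,|\,B^0,\mathbf{P})$ meaningful. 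In the paper this is Lemma \ref{lem:independence}: a second-moment law-of-large-numbers computation over the functionals $\Lambda$ and $\Gamma$ shows that, for $\mathbb{P}^*$-a.e.\ realisation $\mu=\mathbf{P}^*(\omega)$, the process $Y=\mathcal{M}-\int_0^\cdot\sigma(s)\rho(s)dB^0_s$ is a time-changed Brownian motion under $\mu$ itself, conditionally on $\eta_0$. Without this ingredient your construction does not produce a solution of \eqref{eq:Relaxed McKean-Vlasov}.

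Second, Stage 1 is missing the two points where the real work lies. The topology must be Skorokhod's M1 (the paper even extends the paths to $[0,\bar T]$ to use it): your ``Aldous-type'' modulus control ``away from jump times'' is not available, because the jump times of $L^N$ --- the cascades --- are precisely what is unknown, and it is the monotonicity of $L^N$ together with the M1 oscillation function $H_{\mathbb{R}}$ that renders the loss term harmless for tightness; the same choice of topology is what later makes $\mu\mapsto\mu(t\geq\hat\tau)$ continuous (Lemma \ref{prop:loss_continuity}) via the crossing property of Lemma \ref{prop:Crossing Prop}. Moreover, the genuinely delicate estimate is the control of $L^N$ near $t=0$ (Proposition \ref{prop: Loss process nice at 0}), which is where the hypothesis $\nu_0([0,x])\leq\tfrac{1-\epsilon}{2}\alpha^{-1}x$ of Assumption \ref{Assumptions} enters and which requires separating the common noise from the idiosyncratic ones; nothing in your sketch produces this. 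Your Stage 3 is broadly sound: the upper bound on $\Delta L_t$ is indeed obtained from the prelimit as you describe (Lemma \ref{Minimal_jumps_particle}), but for the lower bound the paper avoids passing the prelimit cascade to the limit and instead proves abstractly that \emph{every} c\`adl\`ag solution of \eqref{eq:Relaxed McKean-Vlasov} satisfies $\Delta L_t\geq\inf\{x>0:\boldsymbol{\nu}_{t-}([0,\alpha x])<x\}$ (Proposition \ref{Existence_Prop_MinimalityForCommonNoise}), which is cleaner than trying to transfer \eqref{eq:jump_size} through the weak limit.
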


As in Section 2, it is understood that $\boldsymbol{\nu}_{0-}=\nu_0$ and $L_{0-}=0$. The full list of structural conditions that we impose on \eqref{eq:Particle_System} and \eqref{eq:Relaxed McKean-Vlasov} are collected in Assumption \ref{Assumptions} immediately below. However, the proof of the theorem is postponed, as it requires several auxiliary results which form the subject of Sections \ref{subsec:estimates} and \ref{Sec:Continuity}. For now, we pause instead to explore the contents of the theorem a little further.

Note, in particular, that the requirement of right-continuity and $L_0=0$ in Definition \ref{def:rMV} corresponds to not having a jump at time $t=0$, so the minimality constraint \eqref{thm_PJC} says that the initial condition $\nu_0$ should satisfy $\inf\{ x\geq0:\nu_0(\hspace{0.4pt}[0,\alpha x]\hspace{0.4pt})<x\}=0$.

\begin{rem}[Schauder fixed point approach] While this paper focuses on convergence of the particle system, one can of course also frame \eqref{MV} as a fixed point problem. Such an approach has now been implemented in Remark 2.5 of \cite{nadtochiy_shkolnikov_2018}, based on a Schauder fixed point argument generalised to Skorokhod's M1 topology on $D_\mathbb{R}$. This yields a direct proof of existence for solutions to \eqref{MV} rather than \eqref{eq:Relaxed McKean-Vlasov}. However, the results in \cite{nadtochiy_shkolnikov_2018} do not address a condition for the jump sizes such as the minimality constraint \eqref{thm_PJC}, which is an integral part of Theorem \ref{Thm:Existence} (see also Proposition \ref{Existence_Prop_MinimalityForCommonNoise} below). Finally, we note in passing that the M1 topology is also key to our arguments in the subsections that follow (as in \cite{DIRT_SPA} for $\rho=0$).
\end{rem}

We do not address uniqueness in this paper, but we reiterate here, as in the introduction, that there has been some important new developments on this front (at least in the constant coefficient case). If the law of $X_0$ has a density $V_0\in L^\infty(0,\infty)$, then global uniqueness has been shown for $\rho \in [0,1)$ under the smallness condition $\Vert V_0 \Vert<\alpha^{-1}$ \cite{LS_unique} and for $\rho =0$ under the general condition that $V_0$ does not change monotonicity infinitely often on compacts \cite{delarue_sergey_shkolni}.
In these situations, we have full convergence in law of the particle system, and the relaxed limiting solutions \eqref{eq:Relaxed McKean-Vlasov} for $\rho>0$ simplify to the form \eqref{MV}, as explained in \cite[Thm. 2.3]{LS_unique}.

For general initial conditions, uniqueness remains an open problem when $\rho>0$, and we note that Theorem \ref{Thm:Existence} does not yield full propagation of chaos in the absence of such uniqueness.
 
 \begin{assump}[Structural conditions]
 	\label{Assumptions}
 	The drift, $b(t,x)$, is assumed to be Lipschitz in space with linear growth bound $\left|b(t,x)\right|\leq C(1+\left|x\right|)$. The volatility, $\sigma(t)$, and the correlation, $\rho(t)$, are taken to be deterministic functions in  $\mathcal{C}^{\kappa}(0,T)$, for some $\kappa>1/2$, satisfying the non-degeneracy conditions $0\leq\rho(t)\leq1-\epsilon$
 	and $\epsilon\leq\sigma(t)\leq \epsilon^{-1}$, for some $\epsilon \in (0,1)$. Furthermore, we assume that $\int\!x^8d\nu_0<\infty$, and that $\nu_0(\hspace{0.4pt}[0,x]\hspace{0.4pt} )\leq\frac{1-\epsilon}{2}\alpha^{-1}x$ for all $x > 0$ sufficiently small, for some $\epsilon\in(0,1)$, where $\nu_0$ is the distribution of the initial condition $X_0$ which takes values in $(0,\infty)$.
 \end{assump}
 Notice that, following a standard Gr\"onwall argument, the moment assumption on the initial law $\nu_0$ guarantees that $ \E [\sup_{t\leq T}|X^{1,N}_t|^8]$ is bounded uniformly in $N\geq1$, for any given $T<\infty$. When making use of this observation in later subsections, we will simply refer to Assumption \ref{Assumptions} without writing out the details.

Our next result touches on an essential aspect of Theorem \ref{Thm:Existence} that was in fact  anticipated already in Section 2, when we motivated the minimality constraint \eqref{eq:BU_PhysicalCondition}. Indeed, we can observe that \eqref{eq:BU_PhysicalCondition} is precisely the property \eqref{thm_PJC} enjoyed by the limiting solutions obtained in Theorem \ref{Thm:Existence}.
As we pointed out in Section 2, this property amounts to picking out the smallest possible jump sizes whilst also insisting on the system being c\`adl\`ag. This claim is verified by the next result, based on an adjustment to the argument in \cite[Prop.~1.2]{hambly_ledger_sojmark_2018} which established the corresponding result for the idiosyncratic model. As usual, we employ the conventions $\boldsymbol{\nu}_{0-}=\nu_0$ and $L_{0-}=0$.

\begin{prop}[Minimality of jumps]
	\label{Existence_Prop_MinimalityForCommonNoise}
	Consider any solution to \eqref{eq:Relaxed McKean-Vlasov} given by Definition \ref{def:rMV}, where the coefficient functions $b(t,x)$, $\sigma(t)$, and $\rho(t)$ are as in Assumption \ref{Assumptions}. In order for the solution to be c\`adl\`ag, as required by Definition \ref{def:rMV}, the loss process $L$ must satisfy
	\begin{equation}\label{eq:jump_size_ineq}
	\Delta {L}_t \geq \inf\{ x > 0 : \boldsymbol{\nu}_{t-}(\hspace{0.4pt} [ 0,\alpha x] \hspace{0.4pt}) < x \}
	\end{equation}
	with probability one, for all $t\geq 0$.\end{prop}

\begin{proof}
	We will proceed by contradiction, so suppose $L$ is a c\`adl\`ag solution for which the inequality \eqref{eq:jump_size_ineq} is violated at some time $t \geq 0$.

	By following the strategy in the proof of \cite[Prop.~1.2]{hambly_ledger_sojmark_2018} and \cite[Remark 2.6]{hambly_ledger_sojmark_2018}, we can always consider the restarted system, so it is no loss of generality to take $t=0$ with $L_0=0$ and $\Delta L_0=0$. Thus, assuming for a contradiction that \eqref{eq:jump_size_ineq} is violated at $t=0$, we can find $x_0 > 0$ such that 
	\begin{equation}\label{eq:x0_contradiction}
		\nu_0(\hspace{0.4pt}[0,x]\hspace{0.4pt}) \geq \alpha^{-1} x \qquad \text{for all } x < \alpha x_0. 
	\end{equation}
	We will show that this leads to a violation of the right-continuity of $L$ at $t=0$, by achieving a bound on $L_h$ from below as $h\downarrow 0$. 
	
	Compared to \cite[Prop.~1.2]{hambly_ledger_sojmark_2018}, we must account for the common noise and the drift term in the dynamics of $X$. To this end, we introduce the notation
	\[
	Y^0_t := \int^t_0 \sigma(s) \rho(s) dB^0_s,
	\qquad
	Y_t := \int^t_0 \sigma(s) \sqrt{ 1 - \rho(s)^2 } dB_s.
	\]
	Using the linear growth bound on the drift $b(t,x)$ from Assumption \ref{Assumptions}, we then have
	\[
	X_t \leq X_0 + Ct + C\int^t_0 X_s ds + Y^0_t + Y_t - \alpha L_t,
	\]
	for all $t \leq \tau$. In turn, applying Gr\"onwall's lemma gives
	\[
	X_t \leq e^{Ct} X_0 + Ce^{Ct} t + \bar{Y}^0_t + \bar{Y}_t - \alpha \bar{L}_t
	\leq e^{Ct} X_0 + Ce^{Ct} t + \bar{Y}^0_t + \bar{Y}_t - \alpha L_t
	\]
	for $t \leq \tau$, where we have used the short-hand notation
	\[
	\bar{f}_t := \int^t_0 e^{C(t-s)}df_s,
	\] and we have furthermore used that, with this definition, we have $\bar{L} \geq L$. It follows, by contradiction, that the event $X_0 + Ch + \bar{Y}^0_h + \bar{Y}_h - \alpha L_h \leq 0$ is a subset of the event $\inf_{s\leq h}X_s \leq0 $, for any $h>0$.
	
	Since $e^{Ch} = 1 + O(h)$ as $h \downarrow 0$, we can take a $B^0$-random subsequence of $h \downarrow 0$ for which $\bar{Y}^0_h \leq -h^{1/2}$ (recall that $\bar{Y}^0$ is a deterministic time-change of the Brownian motion $B^0$). In turn, we have
	\[
	 X_0 + Ch + \bar{Y}^0_h + \bar{Y}_h - \alpha L_h \leq X_0 + \bar{Y}_h - \alpha L_h,
	\]
	along that subsequence for $h > 0$ sufficiently small. For the rest of the proof, we restrict to this ($B^0$-random) subsequence. It then follows that

\begin{equation}
\label{eq:MinimalityForCommonNoise_I}
L_h = \mathbb{P}(\tau \leq h \,| \, B^0, \mathbf{P} ) \geq 
\int_{0}^\infty \mathbb{P}( x + \bar{Y}_h - \alpha L_h \leq 0 \,| \, B^0, \mathbf{P}) \nu_0(dx).
\end{equation}

Clearly, the integrand on the right-hand side of \eqref{eq:MinimalityForCommonNoise_I} is differentiable and decreasing as a function of $x\in(0,\infty)$. Thus, returning to \eqref{eq:x0_contradiction}, we can repeat the argument from the start of the proof of
\cite[Prop.~1.2]{hambly_ledger_sojmark_2018} to deduce that 
	\[
	L_h \geq \alpha^{-1} \int^{\alpha x_0}_0 \mathbb{P} (\bar{Y}_h \leq -x + \alpha L_h  \,| \, B^0, \mathbf{P}) dx,
	\]
	where $x_0>0$ was fixed as part of \eqref{eq:x0_contradiction}.

Next, we can observe that $\bar{Y}_{h}$ is independent of $(B^0, \mathbf{P})$, and that, conditionally on $(B^0, \mathbf{P})$, it is thus  normally distributed with mean $0$ and variance  $v(h)$,  where we have defined
	\[
	v(h):=\int^h_0 \sigma(t)(1 - \rho(t)^2)e^{C(h-t)}dt \asymp h.
	\] 
	Therefore, after a change of variables, we get
	\[
	\alpha v(h)^{-\tfrac{1}{2}} L_h \geq \int^{\alpha v(h)^{-1/2} x_0 - \alpha v(h)^{-1/2} L_h }_{ - \alpha v(h)^{-1/2} L_h } \Phi(-y)dy,
	\]
	for $h>0$ along the aforementioned subsequence,
	with $\Phi$ denoting the standard normal cdf. At this point the remainder of the proof is identical to that of \cite[Prop.~1.2]{hambly_ledger_sojmark_2018} (for each realization of the $B^0$-dependent sequence $h\downarrow 0$), which gives us the desired contradiction.
\end{proof}

By definition of $\boldsymbol{\nu}$, we can write $L_t=1-\boldsymbol{\nu}_t([0,\infty))$, for all $t\geq0$, with the jump sizes $\Delta L_t$ fully determined by the left limit $\boldsymbol{\nu}_{t-}$ via the minimality constraint \eqref{thm_PJC}. Hence it is natural to consider  $(X,B,B^0,\boldsymbol{\nu})$ as the leanest way of writing the solution, where $\boldsymbol{\nu}=(\boldsymbol{\nu}_t)_{t\in[0,T]}$ is viewed as a measure-valued stochastic process. In line with this point of view, we have the following result, whose proof is postponed to Section \ref{subsec:filtration}.

\begin{prop}[Filtration]\label{filtration}
	Define $\boldsymbol{\nu}:=(\boldsymbol{\nu}_t)_{t\in[0,T]}$ as the stochastic process whose marginals are the random measures $\boldsymbol{\nu}_t:\Omega \rightarrow \text{\emph{M}}(\mathbb{R})$ given by \eqref{eq:fat_nu} in Theorem  \ref{Thm:Existence}, where $\text{\emph{M}}(\mathbb{R})$ is the space of measures on $\mathbb{R}$ equipped with its Borel $\sigma$-algebra under the total variation norm. Then there is a filtration $(\mathcal{F}_t)_{t\in[0,T]}$, forming a filtered space $(\Omega,\mathcal{F},\mathcal{F}_t,\mathbb{P})$, for which $(X,B,B^0,\boldsymbol{\nu})$ is adapted and $(B, B^0)$ is a 2d Brownian motion.
\end{prop}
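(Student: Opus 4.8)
The plan is to construct the filtration as the usual augmentation of the natural filtration generated by the collection of processes, and then to verify that $(B,B^0)$ retains its Brownian character with respect to this enlarged filtration. Concretely, I would first set $\mathcal{G}_t := \sigma\bigl( X_s, B_s, B^0_s, \boldsymbol{\nu}_s : s \leq t\bigr)$, where the measure-valued process $\boldsymbol{\nu}$ is regarded as taking values in $\text{M}(\mathbb{R})$ with its total-variation Borel structure, so that each $\boldsymbol{\nu}_s$ is a genuine random element and the $\sigma$-algebra it generates is well defined; one should check along the way that $t \mapsto \boldsymbol{\nu}_t$ is measurable as a map into $\text{M}(\mathbb{R})$, which follows from the c\`adl\`ag-in-total-variation regularity inherited from the density bound in Lemma \ref{density_process} together with the defining formula \eqref{eq:fat_nu}. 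Then define $\mathcal{F}_t := \bigcap_{s > t} \sigma\bigl(\mathcal{G}_s \cup \mathcal{N}\bigr)$, where $\mathcal{N}$ is the collection of $\mathbb{P}$-null sets; by construction $(\mathcal{F}_t)$ is complete and right-continuous, and $(X,B,B^0,\boldsymbol{\nu})$ is adapted essentially by definition.

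The substantive point is that $(B,B^0)$ must remain a $2d$ Brownian motion with respect to $(\mathcal{F}_t)$, i.e.\ we need the independence of the future increments of $(B,B^0)$ from $\mathcal{F}_t$. The key structural input is the independence relation $(B^0,\mathbf{P}) \perp B$ from \eqref{eq:Relaxed McKean-Vlasov}, together with the fact that $\boldsymbol{\nu}$ is a measurable functional of $(B^0,\mathbf{P})$ — indeed $\boldsymbol{\nu}_t = \mathbf{P}(\hat\eta_t \in \cdot\,,\, t < \hat\tau)$ depends only on $\mathbf{P}$. I would argue as follows. Write $\mathcal{F}^{B^0,\boldsymbol{\nu}}_t$ for the (augmented, right-continuous) filtration generated by $(B^0,\boldsymbol{\nu})$ alone and $\mathcal{F}^B_t$ for that generated by $B$ alone; since $(B^0,\boldsymbol{\nu})$ is $\sigma(B^0,\mathbf{P})$-measurable and $B$ is independent of $\sigma(B^0,\mathbf{P})$, the two filtrations $\mathcal{F}^{B^0,\boldsymbol{\nu}}_\infty$ and $\mathcal{F}^B_\infty$ are independent. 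For the increment property of $B^0$: $B^0$ is a Brownian motion in its own filtration, and enlarging by the independent $B$ does not affect this (independence of a Brownian motion from an independent $\sigma$-algebra preserves the martingale property of $B^0$, by a standard argument conditioning on that $\sigma$-algebra); adjoining $\boldsymbol{\nu}$ is harmless since $\boldsymbol{\nu}$ adds nothing beyond $\sigma(B^0,\mathbf{P})$-measurable information, and $B^0$ is already a Brownian motion relative to the filtration generated by $(B^0,\mathbf{P})$ — this last fact being part of the setup of \eqref{eq:Relaxed McKean-Vlasov}, where the SDE for $X$ is driven by $B^0$ and hence $B^0$ must be a Brownian motion in a filtration making $(B^0,\mathbf{P})$ adapted (if this is not literally part of Definition \ref{def:rMV}, it holds because $B^0$ is a Brownian motion independent of $\mathbf{P}$, so it is Brownian in the filtration generated jointly by $B^0$ and $\mathbf{P}$). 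For the increment property of $B$: the increments $B_u - B_t$, $u > t$, are independent of $\mathcal{F}^B_t$ and, being part of $B$, are independent of $\sigma(B^0,\mathbf{P}) \supseteq \sigma(\mathcal{F}^{B^0,\boldsymbol{\nu}}_\infty)$ and of $X_0$; since $X$ up to time $t$ is a functional of $X_0$, $B\!\restriction_{[0,t]}$, $B^0\!\restriction_{[0,t]}$ and $\mathbf{P}$ (via $L$, which is $(B^0,\mathbf{P})$-measurable), the increment $B_u - B_t$ is independent of the whole of $\mathcal{G}_t$, hence of $\mathcal{F}_t$ after the null-set augmentation (which does not disturb independence). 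The same functional-representation observation shows the increments of $(B^0)$ after $t$ are independent of $\mathcal{G}_t$: $X\!\restriction_{[0,t]}$ and $\boldsymbol{\nu}\!\restriction_{[0,t]}$ and $B\!\restriction_{[0,t]}$ and $B^0\!\restriction_{[0,t]}$ are all measurable with respect to $\sigma\bigl(X_0, B\!\restriction_{[0,t]}\bigr) \vee \mathcal{F}^{B^0,\mathbf{P}}_t$, and $B^0_u - B^0_t$ is independent of this by the Brownian-in-its-own-filtration property combined with independence from the $B$-block. Finally, joint Gaussianity of the increment vector $(B_u - B_t, B^0_u - B^0_t)$ together with its independence of $\mathcal{F}_t$ and the fact that $B \perp B^0$ gives that $(B,B^0)$ is a $2d$ Brownian motion for $(\mathcal{F}_t)$.

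The main obstacle, and the only place requiring real care, is precisely this verification that adjoining the auxiliary process $\boldsymbol{\nu}$ (equivalently, the information in $\mathbf{P}$) to the natural filtration of $(X,B,B^0)$ does not destroy the Brownian property of $(B,B^0)$ — in particular that $B$ does not become a semimartingale with a drift once $\mathbf{P}$ is visible. This is handled by the independence $(B^0,\mathbf{P}) \perp B$ baked into Definition \ref{def:rMV}: the "extra" randomness in $\boldsymbol{\nu}$ lives inside $\sigma(B^0,\mathbf{P})$, which is independent of $B$, so it cannot create a drift for $B$; and it lives in the natural filtration of $B^0$ together with an independent block, so it cannot create a drift for $B^0$ either. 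A secondary technical point, worth a sentence, is measurability of $\boldsymbol{\nu}$ as an $\text{M}(\mathbb{R})$-valued process, which I would dispatch by noting that, for $t > 0$, $\boldsymbol{\nu}_t$ has a density in $L^\infty$ by Lemma \ref{density_process} and depends measurably on $(B^0,\mathbf{P})$ through \eqref{eq:fat_nu}, while at $t = 0$ it equals the deterministic $\nu_0$; right-continuity in total variation on $(0,T]$ follows from the continuity of the density flow, and this suffices for the joint measurability needed to define $\mathcal{G}_t$.
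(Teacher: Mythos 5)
Your overall architecture (take the filtration generated by $X$, $B$, $B^0$ and the marginals $\boldsymbol{\nu}_s$, $s\leq t$, then check that future increments of $(B,B^0)$ are independent of it) matches the paper, and your treatment of the $B$-increments is sound: it follows from the mutual independence of $X_0$, $B$ and $(B^0,\mathbf{P})$ guaranteed by Definition \ref{def:rMV}, since everything in $\mathcal{G}_t$ is a functional of these three blocks with $B$ entering only through $B\vert_{[0,t]}$.

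The gap is in the $B^0$-increments. You assert that ``$B^0$ is a Brownian motion independent of $\mathbf{P}$, so it is Brownian in the filtration generated jointly by $B^0$ and $\mathbf{P}$.'' This premise is false: Definition \ref{def:rMV} gives $(B^0,\mathbf{P})\perp B$, not $B^0\perp\mathbf{P}$; indeed $\mathbf{P}$ is the conditional law of $X$, which is driven by $B^0$, so $\mathbf{P}$ is strongly correlated with the whole path of $B^0$ and revealing $\sigma(\mathbf{P})$ at time $0$ would in general destroy the martingale property of $B^0$. The statement you actually need --- that the marginal information $(\boldsymbol{\nu}_s)_{s\leq t}$ does not anticipate $B^0_{t+\cdot}-B^0_t$ --- is a genuine compatibility property that is \emph{not} encoded in the relaxed formulation \eqref{eq:Relaxed McKean-Vlasov}, and your fallback (``$B^0$ is already Brownian relative to the filtration generated by $(B^0,\boldsymbol{\nu})$'') is precisely the claim to be proved, so the argument is circular. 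The paper closes this gap by going back to the particle system: for finite $N$ the system has a unique strong solution, so $(\boldsymbol{\nu}^N_s)_{s\leq t}$ is a functional of $(B^0,B^1,\dots,B^N)\vert_{[0,t]}$ and the initial data and is therefore independent of $(B^0_{t+s}-B^0_t)_{s\geq0}$; this independence is then transferred to the limit via the finite-dimensional weak convergence of the absorbing marginal flow (Proposition \ref{marginal_flow_weak_conv}, which itself rests on the M1-continuity results of Section 3). That approximation step is the substance of the paper's proof and is absent from your proposal; without it, or some substitute adaptedness hypothesis on $\mathbf{P}$, the claim for $B^0$ does not follow.
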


In the formulation of Definition \ref{def:rMV} and Theorem \ref{Thm:Existence} it is not necessary to refer to a filtration, but the previous proposition emphasises that one can indeed work with a filtration to which $\boldsymbol{\nu}$ and $L$ are adapted without affecting the Brownian dynamics. This observation leads us naturally to the next subsection, where we derive a stochastic evolution equation for the flow of the adapted stochastic process $\boldsymbol{\nu}$.

\subsection{Stochastic evolution equation}\label{subsec:SPDE}

Following on from the above, we can think of the McKean--Vlasov system \eqref{eq:Relaxed McKean-Vlasov} in terms of the adapted stochastic process $\boldsymbol{\nu}$. However, instead of working with the full filtration $\mathcal{F}_t$ from Proposition \ref{filtration}, we now wish to consider the subfiltration $\mathcal{F}^{B^0\!\!,\,\boldsymbol{\nu}}_t$ generated only by the common noise $B^0$ and the stochastic process $\boldsymbol{\nu}$ itself. The explicit construction of this is given in Section \ref{subsec:filtration} (along with the proof of Proposition \ref{filtration}), verifying that $B^0$ is indeed a standard Brownian motion in $\mathcal{F}^{B^0\!\!,\,\boldsymbol{\nu}}_t$ and that this subfiltration is independent of $B$.

The next result shows that $ \boldsymbol{\nu}$ satisfies a generalized stochastic evolution equation driven by the common noise $B^0$. While we work with the relaxed formulation \eqref{eq:Relaxed McKean-Vlasov}, we stress that identical arguments reveal that \eqref{MV} leads to the very same evolution equation. The only difference is whether or not the solution is automatically adapted to the driving noise $B^0$, which provides another point of view on the distinction between \eqref{eq:Relaxed McKean-Vlasov} and \eqref{MV}. Aside from this, our main aim here is to make more clear the connection between the McKean--Vlasov formalism of the present paper and the focus on Fokker--Planck equations in the related PDE and mathematical neuroscience literature, as further discussed in Remark \ref{rem:neuro_pde} below.

\begin{prop}[Stochastic evolution equation]\label{prop:SPDE_weak_form}Let $\boldsymbol{\nu}$ be defined by \eqref{eq:fat_nu}  for a given solution to \eqref{eq:Relaxed McKean-Vlasov} under Assumption \ref{Assumptions}. Then each marginal  $\boldsymbol{\nu}_t$ is supported on the positive half-line $(0,\infty)$, for all $t\geq0$, and the flow $t\mapsto \boldsymbol{\nu}_t$ satisfies the nonlinear stochastic evolution equation
\begin{equation}\label{eq:measure_SPDE}
\left\{
\begin{aligned}
	d\langle \boldsymbol{\nu}_t , \phi \rangle &=  \bigl\langle \boldsymbol{\nu}_t, \tfrac{1}{2}\sigma(t)^2\phi^{\prime\prime} + b(t,x)\phi^\prime \bigr\rangle dt+\bigl\langle \boldsymbol{\nu}_t,\rho(t)\sigma(t)\phi^\prime \bigr\rangle dB_{t}^{0} \\[2pt]
	&\quad\; -\langle \boldsymbol{\nu}_t,\alpha\phi^\prime \rangle d{L}^{c}_{t} +  \int_{\mathbb{R}} \langle \boldsymbol{\nu}_{t-}, \phi(\cdot - \alpha x)- \phi \rangle J_L(dt,dx),
	 \\[6pt]
	 L^c_t &= L_t - \sum_{0<s\leq t}\Delta L_s  \quad \text{and} \quad J_L=\sum_{0<s<\infty} \delta_{(s,\Delta L_s)}, \\[6pt]
	 	L_{t}&=1-\boldsymbol{\nu}_t([0,\infty)) \quad \text{and} \quad \Delta L_{t}=\inf\{ x\geq0:\boldsymbol{\nu}_{t-}(\hspace{0.4pt}[0,\alpha x]\hspace{0.4pt})<x\},
	\end{aligned}
	\right.
\end{equation}
	for test functions $\phi \in \mathcal{D}_0$, where $\mathcal{D}_0:=\{ f\in \mathcal{C}^2_b(\mathbb{R}) : f\vert_{(-\infty, 0]}=0 \}$.
\end{prop}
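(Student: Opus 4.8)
The plan is to derive the evolution equation by applying It\^o's formula to $\phi(X_t)$ for a test function $\phi \in \mathcal{D}_0$, taking conditional expectations given $(B^0,\mathbf{P})$, and carefully tracking the absorption at the origin. The key observation that makes the boundary terms vanish is the choice of test function: since $\phi$ and all its derivatives vanish on $(-\infty,0]$, the contribution of particles absorbed at the boundary disappears, so $\langle \boldsymbol{\nu}_t, \phi\rangle = \mathbb{E}[\phi(X_t)\mathbf{1}_{t<\tau}\,|\,B^0,\mathbf{P}] = \mathbb{E}[\phi(X_{t\wedge\tau})\,|\,B^0,\mathbf{P}]$, because $\phi(X_{t\wedge\tau}) = 0$ on $\{\tau \leq t\}$ (here I would first note $X_\tau \leq 0$, using that $X$ jumps only downward, so indeed $\phi(X_{t\wedge\tau})$ is supported on $\{t<\tau\}$). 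The support statement $\boldsymbol{\nu}_t(0,\infty) = \boldsymbol{\nu}_t(\mathbb{R})$ follows by the same reasoning, or directly from $\tau = \inf\{t : X_t \leq 0\}$ together with the density estimate of Lemma~\ref{density_process}.

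Next I would write $X_{t\wedge\tau}$ as a semimartingale. From \eqref{eq:Relaxed McKean-Vlasov}, decompose $L_t = L^c_t + \sum_{0<s\leq t}\Delta L_s$ into its continuous and pure-jump parts, so that $X$ is driven by: a drift $b(t,X_t)\,dt$, the idiosyncratic martingale $\sigma(t)\sqrt{1-\rho(t)^2}\,dB_t$, the common-noise term $\sigma(t)\rho(t)\,dB^0_t$, the continuous feedback $-\alpha\,dL^c_t$ (a finite-variation continuous term), and the jumps $-\alpha\Delta L_s$ at the blow-up times. Applying It\^o's formula for c\`adl\`ag semimartingales to $\phi(X_{t\wedge\tau})$ gives the drift term $\tfrac12\sigma(t)^2\phi''$, the transport terms $b(t,x)\phi'$ from the drift and $-\alpha\phi'\,dL^c_t$ from the continuous feedback, a stochastic integral $\sigma(t)\sqrt{1-\rho^2}\phi'(X_t)\,dB_t$, a stochastic integral $\sigma(t)\rho(t)\phi'(X_t)\,dB^0_t$, and the jump terms $\sum_s[\phi(X_{s-}-\alpha\Delta L_s) - \phi(X_{s-})]$. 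Now take $\mathbb{E}[\,\cdot\,|\,B^0,\mathbf{P}]$: the idiosyncratic martingale integral vanishes in conditional expectation (here I would invoke the independence $(B^0,\mathbf{P})\perp B$ and the filtration result of Proposition~\ref{filtration}, or a direct argument that the conditional expectation of the $B$-martingale is zero — this needs a little care since $\phi'(X_t)$ is not $\sigma(B^0,\mathbf{P})$-measurable, but boundedness of $\phi'$ makes it an honest martingale and a Fubini/tower argument handles it), and the conditional expectation commutes with the remaining $dt$, $dB^0$, and jump integrals by conditional Fubini, giving exactly the four terms in \eqref{eq:measure_SPDE} once we rewrite $\mathbb{E}[\cdot\,|\,B^0,\mathbf{P}] = \langle\boldsymbol{\nu}_\cdot, \cdot\rangle$. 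The jump term becomes $\langle\boldsymbol{\nu}_{s-},\phi(\cdot-\alpha\Delta L_s)-\phi\rangle$ at each blow-up time, which is precisely $\int_{\mathbb{R}}\langle\boldsymbol{\nu}_{t-},\phi(\cdot-\alpha x)-\phi\rangle J_L(dt,dx)$.

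Finally, $L_t = 1 - \boldsymbol{\nu}_t([0,\infty))$ is immediate from the definition $\boldsymbol{\nu}_t = \mathbb{P}(X_t\in\cdot, t<\tau\,|\,B^0,\mathbf{P})$ and $L_t = \mathbb{P}(\tau\leq t\,|\,B^0,\mathbf{P})$, and the formula for $\Delta L_t$ is just \eqref{thm_PJC}, which holds by Theorem~\ref{Thm:Existence} for limit-point solutions and in general must be assumed as part of the (minimal) solution concept. The main obstacle I anticipate is the rigorous justification that the $dB_t$-stochastic-integral term drops out under $\mathbb{E}[\,\cdot\,|\,B^0,\mathbf{P}]$ — this is where one genuinely uses the relaxed formulation's independence structure $(B^0,\mathbf{P})\perp B$ together with the filtration from Proposition~\ref{filtration}, and one has to argue that $\int_0^{t\wedge\tau}\sigma(s)\sqrt{1-\rho(s)^2}\phi'(X_s)\,dB_s$ is a genuine martingale (bounded integrand, so $L^2$) and that conditioning on $(B^0,\mathbf{P})$ preserves the martingale property, e.g.\ via the tower property over the enlarged filtration in which $(B^0,\mathbf{P})$ is $\mathcal{F}_0$-measurable and $B$ remains a Brownian motion. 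A secondary technical point is exchanging conditional expectation with the space integrals against the (random, time-dependent) measure $\boldsymbol{\nu}_{t-}$ in the jump term, but since $J_L$ has at most countably many atoms and $\phi$ is bounded, this is a routine conditional dominated convergence / Fubini argument.
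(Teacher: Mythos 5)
Your proposal is correct and follows essentially the same route as the paper's proof: apply It\^o's formula for c\`adl\`ag semimartingales to the stopped process $\phi(X_{\cdot\wedge\tau})$, use that $\phi\in\mathcal{D}_0$ vanishes on $(-\infty,0]$ so that $\phi(X_{t\wedge\tau})=\phi(X_t)\mathbf{1}_{t<\tau}$, and then condition on the common information so that the $B$-integral is killed by independence while the $dt$, $dB^0$, and jump integrals commute with the conditional expectation. The only cosmetic difference is that the paper conditions on the subfiltration $\mathcal{F}^{B^0\!,\,\boldsymbol{\nu}}_t$ constructed in Section \ref{subsec:filtration} (rather than on the full pair $(B^0,\mathbf{P})$) precisely to make the commutation with the $B^0$-stochastic integral rigorous, which is the same technical point you flag.
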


Before turning to the proof, we take a moment to interpret the equation \eqref{eq:measure_SPDE}. Most importantly, we know that $L$ is of finite variation, so it is clear that all the terms in \eqref{eq:measure_SPDE} make sense. Now suppose there is a blow-up at some time $t>0$. Then the dynamics in \eqref{eq:measure_SPDE} yield
		\[
	\Delta \langle \boldsymbol{\nu}_{t} , \phi \rangle = \langle \boldsymbol{\nu}_{{t}-}, \phi(\cdot - \alpha \Delta L_t) \rangle - \langle \boldsymbol{\nu}_{t-} , \phi \rangle,\quad \text{for all } \phi \in \mathcal{D}_0.
	\]
From Lemma \ref{density_process}, we know that $\boldsymbol{\nu}_t$ has a density $V_t$, for all $t\geq 0$, so we deduce that a blow-up can be described as the system restarting from the shifted density
\[
V_{t}(x)=V_{t-}(x+\alpha \Delta L_{t}), \quad \text{with} \quad \Delta L_{t}=\inf\{ x\geq0:\textstyle\int_0^{\alpha x}V_{t-}(y)dy<x\}.
\]
Furthermore, by performing a formal integration by parts,  we see that \eqref{eq:measure_SPDE} gives a generalized notion of solution for the evolution equation presented  in Remark \ref{rem:evo_eqn} phrased for the flow of the densities $t\mapsto V_t$. Finally, we recall that Figure \ref{fig:theonlyfigure} shows two heat plots illustrating how these densities evolve in time, with the right-hand plot displaying a blow-up given by a shift of the density as described above.

\begin{rem}[Integrate-and-fire models]\label{rem:neuro_pde}
	In the mathematical neuroscience literature, stochastic Fokker--Planck equations analogous to \eqref{eq:measure_SPDE} appear e.g.~in \cite[Eqn.~(32)]{brunel2000}, \cite[Eqn.~(3.24)]{brunel-hakim_1998}, \cite[Eqn.~(2.29)]{mattia_2002}, and \cite[Eqn.~(4)]{torcini} as `integrate-and-fire' models for networks of electrically coupled excitatory neurons. However, there is no discussion of a jump component in these works, while Theorem \ref{Prop_BU_CommonNoiseBlowUp} shows that blow-ups are an inevitable part of the stochastic evolution starting from a Dirac mass. Moreover, it is heuristically assumed that $t\mapsto L_t$ has a derivative, which is then referred to as the `firing rate' and is identified with the flux across the boundary. Since $t\mapsto L_t$ is increasing, it is indeed differentiable almost everywhere, and the equation \eqref{eq:measure_SPDE} formally gives the flux condition \[
	L^\prime_t=\frac{1}{2}\sigma(0)^2\partial_x V(0), \qquad \text{for all } t>0,
	\]
	in line with \cite[Eqn.~(2.6)]{mattia_2002}. Nevertheless, for $\rho>0$, one can no longer expect $L$ to be absolutely continuous even in between blow-ups, nor will $\partial_x V(0)$ be well-defined (due to the roughness of $t\mapsto B^0_t$). In this respect, it is clear that the McKean--Vlasov formulation is better suited for working with blow-ups, and it entirely avoids the aforementioned regularity issues.
\end{rem}
	
\begin{proof}[Proof of Proposition \ref{prop:SPDE_weak_form}]  Let $\boldsymbol{\nu}$ be defined by \eqref{eq:fat_nu}, for a solution to \eqref{eq:Relaxed McKean-Vlasov} given by Theorem \ref{Thm:Existence}, and consider $(X,B,B^0,\boldsymbol{\nu})$ on the filtered space from Proposition \ref{filtration}. By construction, $L_t=1-\boldsymbol{\nu}_t([0,\infty))$. Since $L$ is  of finite variation, the sum of jumps is convergent, and hence the continuous part $L^c$ is given by the well-defined expression
	\[
	L^c_t= L_t - \sum_{0<s\leq t}\Delta L_s,\qquad \text{for all } t\geq 0.
	\]
Moreover, this means that integrals against both $L$ and $L^c$ are well-defined in the Lebesgue--Stieltjes sense. As concerns the final condition on the jump sizes $\Delta L$, this is just the minimality constraint \eqref{thm_PJC}, which is satisfied by Theorem \ref{Thm:Existence}. In order to derive \eqref{eq:measure_SPDE}, we let $\mathcal{F}^{B^0\!\!,\,\boldsymbol{\nu}}_t$ be the corresponding filtration generated by $B^0$ and $\boldsymbol{\nu}$, as detailed in Section \ref{subsec:filtration}. The adaptedness of $\boldsymbol{\nu}$ to this filtration implies that $\boldsymbol{\nu}$ and $L$ can be written as
		\[
		\boldsymbol{\nu}_t  = \mathbb{P}( X_t\in \cdot\,,\;t<\tau\,|\,\mathcal{F}_t^{B^0\!\!,\,\boldsymbol{\nu}}) \quad \text{and} \quad L_t=\mathbb{P}({\tau} \leq t \, | \, \mathcal{F}_t^{B^0\!\!,\,\boldsymbol{\nu}}),
		\]
	for $t\geq 0$, with 	$\boldsymbol{\nu}_0= \mathbb{P}( X_0\in \cdot\,)$.
	As explained in Section \ref{subsec:filtration}, $\mathcal{F}_t^{B^0\!\!,\,\boldsymbol{\nu}}$ is a subfiltration of the filtration $\mathcal{F}_t$ from Proposition \ref{filtration}, $B^0$ remains a Brownian motion with respect to this filtration, and, crucially, the filtration is independent of the other Brownian motion $B$. In view of the above expression for $\boldsymbol{\nu}_t$, we see that its action on test functions $\phi \in \mathcal{D}_0$ is given by
		\[
	\langle \boldsymbol{\nu}_t ,\phi \rangle := \int \phi(x)  \boldsymbol{\nu}_t (dx) = \mathbb{E}[\phi(X_t)\mathbf{1}_{t<\tau}\,|\,\mathcal{F}_t^{B^0\!\!,\,\boldsymbol{\nu}}]
	\]
	for $t \geq 0$.	Applying It\^o's formula for general (c\`adl\`ag) semimartingales to $X_{\cdot\land \tau}$, we obtain
\begin{align}\label{Ito_nu}
\phi(X_{t\land \tau}) = \; & \phi(X_{0}) + \frac{1}{2}\int_{0}^t \textbf{1}_{s<\tau} \phi^{\prime\prime}(X_{s})\sigma(s)^2ds  + \int_{0}^t \textbf{1}_{s<\tau} \phi^\prime(X_{s})b(s,X_s)ds \nonumber \\
&  + \int_{0}^t \textbf{1}_{s<\tau} \phi^\prime(X_s)\sigma(s)\sqrt{1-\rho(s)^2}dB_s + \int_{0}^t \textbf{1}_{s<\tau} \phi^\prime(X_s)\sigma(s)\rho(s)dB_s^0 \nonumber \\
& - \alpha \int_{0}^t \textbf{1}_{s<\tau} \phi^\prime(X_{s})dL^c_s + \sum_{0<s\leq t}  1_{s\leq \tau}\bigl( \phi(X_{s-}-\alpha \Delta L_t) - \phi(X_{s-}) \bigr ),
\end{align}
 for $t\geq 0$,	for all $\phi\in\mathcal{D}_0$, where we have used that $L$ is of finite variation. From here, we note that \[
 \phi(X_t)\mathbf{1}_{t<\tau}=\phi(X_{t \land \tau}),\qquad \text{for all } t\geq 0,
 \]
 by definition of $\mathcal{D}_0$, so $\langle \boldsymbol{\nu}_t ,\phi \rangle $ equals the conditional expectation given $\mathcal{F}_t^{B^0\!\!,\,\boldsymbol{\nu}}$ of the right-hand side of \eqref{Ito_nu}. Since the filtration $\mathcal{F}_t^{B^0\!\!,\,\boldsymbol{\nu}}$ is independent of $B$, the integral against $B$ is killed under this operation of taking conditional expectation, whereas the $\mathcal{F}_t^{B^0\!\!,\,\boldsymbol{\nu}}$-adaptedness of $L^c$ and $B^0$ means that this operation commutes with the integrals against these processes (see e.g.~\cite[Lemma 8.9]{hambly_ledger_2017}).  Therefore, by the previous two observations and Fubini's theorem, we can deduce \eqref{eq:measure_SPDE} from \eqref{Ito_nu}.
\end{proof}

In addition to the formulation \eqref{eq:measure_SPDE} being weak in the sense of generalized functions, we stress that the solution given by the flow $t\mapsto\boldsymbol{\nu}_t$ for \eqref{eq:Relaxed McKean-Vlasov} is also probabilistically weak in the sense that the filtration $\mathcal{F}^{B^0,\boldsymbol{\nu}}_t$ is allowed to be strictly larger than that generated by the driving noise $B^0$ alone. For a similar situation, see also the classical paper \cite{dawson}. In other words, the fact that the loss process $L$ in \eqref{eq:Relaxed McKean-Vlasov} is conditional on $(B^0,\mathbf{P})$, rather than just $B^0$,  simply amounts to the corresponding solution  $\boldsymbol{\nu}$ of the stochastic evolution equation \eqref{eq:measure_SPDE} not necessarily being adapted to the driving noise $B^0$.

\subsection{Preliminary estimates}\label{subsec:estimates}

In this subsection we establish several important estimates concerning the behaviour of $L^N$,  $\boldsymbol{\nu}^N$, and a tagged particle $X^{i,N}$ as $N\rightarrow \infty$. These estimates are then used to establish suitable tightness and continuity results in the next subsection. Throughout both subsections, and indeed for the remainder of the paper, we shall always be working on the premise that Assumption \ref{Assumptions} is satisfied.

Due to our particular setup and the choice of topology in Section 3.2 below, the key prerequisite for tightness is sufficient control over the loss process $L^N$ near the initial time $t=0$. To achieve this, we need to exploit some control on the initial condition near the origin. The assumption $\nu_0(\hspace{0.4pt}[0,x]\hspace{0.4pt} )\leq\frac{1-\epsilon}{2}\alpha^{-1}x$ is by no means optimal, but it is already quite general (for example, for $\rho=0$, \cite{DIRT_SPA,nadtochiy_shkolnikov_2017} assumes $\nu_0$ is supported away from the boundary) and it allows for an intuitive argument to control the loss of mass. The point of the constant $c=\frac{1-\epsilon}{2}<1$ is that we can utilise $c<1-c$. Moreover, we rely on the fact that $\rho(t)$ is bounded away from $1$, as we need to single out the contribution of the common noise and utilise the averaging of the independent Brownian motions in a law of large numbers argument.
\begin{prop}[Initial control of the loss]
	\label{prop: Loss process nice at 0} Consider the sequence of loss processes $L^N$, for $N \geq 1$, as given by the unique solution to the particle system \eqref{eq:Particle_System} of size $N$ under Assumption \ref{Assumptions}. Then we have that
	\[
\limsup_{N\rightarrow\infty}\mathbb{P}\bigl(L_{\delta}^{N}\geq\alpha^{-1}\log(\delta^{-1})\delta^{\frac{1}{2}}\bigr)=0.
	\]
	for all $\delta\in(0,\delta_0)$, for some small enough $\delta_0>0$.
\end{prop}
	\begin{proof} For clarity, we split the proof into three concise steps.
		
		\textbf{Step 1. } Given $\epsilon>0$ from Assumption \ref{Assumptions}, we fix two constants $\lambda>0$ and $\lambda^\prime>0$ such that $1-\epsilon<\lambda^{\prime}<\lambda<1$. Setting $\varepsilon=\varepsilon(\delta):=\delta^{\frac{1}{2}}\log(\delta^{-1})$, we then have
		\[
			\mathbb{P}(L_{\delta}^{N}\geq\alpha^{-1}\varepsilon)\leq\mathbb{P}\bigl(L_{\delta}^{N}\geq\alpha^{-1}\varepsilon,\;\nu_{0}^{N}( \hspace{0.4pt} [ 0,\varepsilon ]\hspace{0.4pt} )\leq{\textstyle \frac{\lambda^{\prime}}{2}}\alpha^{-1}\varepsilon\bigr)+\mathbb{P}\bigl(\nu_{0}^{N}(\hspace{0.4pt} [ 0,\varepsilon ] \hspace{0.4pt} )\geq{\textstyle \frac{\lambda^{\prime}}{2}}\alpha^{-1}\varepsilon\bigr),
		\]
		for each $N\geq 1$, where we have defined the initial empirical measures
		\[ \nu_{0}^{N}:=\frac{1}{N}\sum_{i=1}^{N}\delta_{X_{0}^{i,N}}.
		\]
		Observe that, for all $\varepsilon>0$ sufficiently small, we must have
		\[
			\lim_{N\rightarrow\infty}\mathbb{P}\bigl(\nu_{0}^{N}(\hspace{0.4pt} [0,\varepsilon] \hspace{0.4pt} )\geq{\textstyle \frac{\lambda^{\prime}}{2}}\alpha^{-1}\varepsilon\bigr) = \mathbf{1}_{\{\nu_{0}( \hspace{0.4pt} [ 0,\varepsilon] \hspace{0.4pt} )\geq\frac{\lambda^{\prime}}{2}\alpha^{-1}\varepsilon\}}=0,
		\]
		by the law of large numbers and Assumption \ref{Assumptions}.
		Note also that $\nu_{0}^{N}(\hspace{0.4pt} [ 0,\varepsilon]\hspace{0.4pt} )\leq\frac{\lambda^{\prime}}{2}\alpha^{-1}\varepsilon$
		if and only if $\#\{i:X_{0}^{i,N}\in [0,\varepsilon] \}\leq N\frac{\lambda^{\prime}}{2}\alpha^{-1}\varepsilon$.
		Thus, letting $\mathcal{I}$ denote the random index set 
		\[
		\mathcal{I}=\{i\in\{1,\ldots,N\}:X_{0}^{i,N}\geq\varepsilon\},
		\]
		for our $\varepsilon=\varepsilon(\delta)>0$, we get the estimate
	\[
		\mathbb{P}\bigl(L_{\delta}^{N}\geq \alpha^{-1}\varepsilon,\,\nu_{0}^{N}( \hspace{0.4pt} [ 0,\varepsilon ] \hspace{0.4pt} )\leq{\textstyle \frac{\lambda^{\prime}}{2}}\alpha^{-1}\varepsilon\bigr)\leq \sum_{\left|\mathcal{I}_{0}\right|\geq N(1-\frac{\lambda^{\prime}\varepsilon}{2\alpha})}\!\mathbb{P}(L_{\delta}^{N}\geq\alpha^{-1}\varepsilon\mid\mathcal{I}=\mathcal{I}_{0})\mathbb{P}(\mathcal{I}=\mathcal{I}_{0}).
	\]
		
		\textbf{Step 2. }Next we define
		\[
		\tau:=\inf\{t\geq0:L_{t}^{N}\geq{\textstyle \frac{\lambda}{2}}\alpha^{-1}\varepsilon\} \quad\text{and}\quad\mathcal{J}:=\bigl\{ i:\inf_{s<\tau\land\delta}(X_{s}^{i,N}-X_{0}^{i,N})\leq-{\textstyle \frac{1}{2}}\varepsilon\bigr\}.
		\]
		Then we claim that, for every $\mathcal{I}_{0}$ with $|\mathcal{I}_{0}|\geq N(1-\frac{\lambda^{\prime}}{2}\alpha^{-1}\varepsilon)$,
		we have
		\begin{equation}
		\label{eq:loss_inc_2}
		\mathbb{P}(L_{\delta}^{N}\geq
		\alpha^{-1}\varepsilon\mid\mathcal{I}=\mathcal{I}_{0})\leq\mathbb{P}\bigl(|\mathcal{I}_{0}\cap\mathcal{J}|\geq N{\textstyle \frac{\lambda-\lambda^{\prime}}{2}}\alpha^{-1}\varepsilon\mid\mathcal{I}=\mathcal{I}_{0}\bigr).
		\end{equation}
		Suppose, for a contradiction, that $L_{\delta}^{N}\geq \alpha^{-1}\varepsilon$
		but $|\mathcal{I}_{0}\cap\mathcal{J}|<N{\textstyle \frac{\lambda-\lambda^{\prime}}{2}}\alpha^{-1}\varepsilon$.
		Then	
		\[
		|\mathcal{I}_{0}\cap\mathcal{J}|+|\mathcal{I}_{0}^{\complement}|<N{\textstyle \frac{\lambda-\lambda^{\prime}}{2}}\alpha^{-1}\varepsilon+N{\textstyle \frac{\lambda^{\prime}}{2}}\alpha^{-1}\varepsilon=N{\textstyle \frac{\lambda}{2}}\alpha^{-1}\varepsilon,
		\]
		so even if all these particles are killed in the interval $[0,\tau\land\delta]$,
		their total contribution to $L_{\tau\land\delta}^{N}$ is
		less than ${\textstyle \frac{\lambda}{2}}\alpha^{-1}\varepsilon$.
		In particular, this means that, at time $\tau\land\delta$,
		the largest possible downward jump for the remaining particles (indexed
		by $\mathcal{I}_{0}\cap\mathcal{J^{\complement}}$) is $-{\textstyle \frac{\lambda}{2}}\varepsilon$.
		Notice, however, that these remaining particles all satisfy $X_{0}^{j,N}\geq\varepsilon$
		and \[
		\inf_{s<\tau\land\delta}(X_{s}^{j,N}-X_{0}^{j,N})>-{\textstyle \frac{1}{2}}\varepsilon,
		\]
		so we must have $\inf_{s<\tau\land\delta}X_{s}^{j,N}>\frac{1}{2}\varepsilon$.
		Consequently, a translation by $-{\textstyle \frac{\lambda}{2}}\varepsilon$
		cannot cause any of them to drop below the origin and hence $L_{\tau\land\delta}^{N}<{\textstyle \frac{\lambda}{2}}\alpha^{-1}\varepsilon$, so $\delta<\tau$, which certainly implies $L_{\delta}^{N}<\alpha^{-1}\varepsilon$. This yields the desired contradiction.
		
		\textbf{Step 3. }Note that $\alpha L_{s}^{N}<{\textstyle \frac{\lambda}{2}}\varepsilon$
		for $s<\tau\land\delta$. Therefore, using the linear growth assumption on the drift, we deduce that $\inf_{s<\tau\land\delta}(X_{s}^{i,N}-X_{0}^{i,N})\leq-{\textstyle \frac{1}{2}}\varepsilon$
		implies
		\begin{equation}\label{eq:particles_in_J}
		-\delta C(1+\sup_{s<\delta}|X_{s}^{i,N}|)+\inf_{s<\delta}\{ Y_{s}^{i}+Y_{s}^{0}\} -{\textstyle\frac{\lambda}{2}}\varepsilon\leq-{\textstyle\frac{1}{2}}\varepsilon,
		\end{equation}
		where
		\[
		Y_{s}^{0}:=\int_{0}^{s}\sigma(r)\rho(r)dB_{r}^{0}\quad \text{and} \quad Y_{s}^{i}:=\int_{0}^{s}\sigma(r)\sqrt{1-\rho(r)^{2}}dB_{r}^{i}.
		\]
		From here, we split the analysis on the intersection of the events
		\[
		\Big\{\sup_{s<\delta}|Y_{s}^{0}|\leq \delta^{\frac{1}{2}} \log\log(\delta^{-1}) \Big\} \quad \text{and} \quad \Big\{\sup_{s<\delta}|X_{s}^{i,N}|\leq  \delta^{-\frac{1}{2}} \log\log(\delta^{-1})\Big\},
		\]
		and their complements. It is easily verified that both complements occur with probability $o(1)$ as $\delta\downarrow0$, uniformly in $N\geq1$. On the intersection of the two events, it follows from the previous observation (\ref{eq:particles_in_J}) that
		\[\inf_{s<\delta}Y_s^i \leq -\frac{1}{2}\varepsilon+\frac{\lambda}{2}\varepsilon+\delta C+2\delta^{\frac{1}{2}}\log\log(\delta^{-1})
		\] for all $i\in \mathcal{J}$. 
 	Hence we can conclude from the final estimate in Step 1 and the claim (\ref{eq:loss_inc_2}) in Step 2 that
		\[
		\mathbb{P}(L_{\delta}^{N}\geq\alpha^{-1}\varepsilon)
			\leq
				\mathbb{P}\Bigl(\frac{1}{N}\sum_{i=1}^{N}\mathbf{1}_{\bigl\{\inf_{s<\delta}Y_{s}^{i}\leq-{\textstyle \frac{1-\lambda}{2}}\varepsilon+\delta C+2\delta^{\frac{1}{2}}\log\log(\delta^{-1})\bigr\}}\geq{\textstyle \frac{\lambda-\lambda^{\prime}}{2}}\alpha^{-1}\varepsilon\Bigr) + o(1)
		\]
		as $N\uparrow \infty$ and $\delta\downarrow0$, where the $o(1)$ terms in $\delta$ are uniform in $N \geq 1$. Recall that $\varepsilon=\delta^{\frac{1}{2}}\log(\delta^{-1})$. Since $\rho$ is bounded away from $1$ (by~Assumption
		\ref{Assumptions}), we can perform a time-change in each $Y^{i}$, and then the law of large numbers yields
		\[
		\limsup_{N\rightarrow\infty}\mathbb{P}(L_{\delta}^{N}
			\geq
				\alpha^{-1}\varepsilon)\leq\mathbf{1}_{\bigl\{{\textstyle \Phi}\bigl(-c[\frac{1-\lambda}{2}\log(\delta^{-1})-\delta^{\frac{1}{2}}C-2\log\log(\delta^{-1})]\bigr)\geq \frac{\lambda-\lambda^{\prime}}{2}\alpha^{-1}\log(\delta^{-1})\delta^{\frac{1}{2}}\bigr\}}+o(1)
		\]
	 as $\delta\downarrow0$, for some $c>0$,
		where $\Phi$ is the standard normal cdf.~Noting that $\Phi$ evaluated at the given point is of order $O(\exp\{ - \mathrm{const.} \times (\log \delta^{-1})^2 \})$, the indicator eventually becomes zero as $\delta \downarrow 0$ and thus the proof is complete.
	\end{proof}

The minimality constraint (\ref{eq:BU_PhysicalCondition}) will be recovered from the following lemma, together with Proposition \ref{Existence_Prop_MinimalityForCommonNoise}. The proof follows by essentially the same reasoning as in Proposition 5.3 of \cite{DIRT_SPA}, but we provide the full details for completeness. The main point is that, unlike the previous lemma, there is no need to isolate the effects of the common and idiosyncratic noise.

\begin{lem}
	\label{Minimal_jumps_particle}
	Fix any $t<T$. There is
	a constant $C>0$ such that, for every $\delta>0$ sufficiently small, we have
	\[
		\mathbb{P}\bigl(\boldsymbol{\nu}_{t-}^{N}(\hspace{0.4pt} [0,\alpha x+C\delta^{\frac{1}{3}}]\hspace{0.4pt} ) \geq x \ \ \forall x\leq L_{t+\delta}^{N}-L_{t-}^{N}-C\delta^{\frac{1}{3}}\bigr)\geq1-C\delta,
	\]
whenever $N\geq\delta^{-\frac{1}{3}}$.
\end{lem}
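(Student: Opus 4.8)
The plan is to produce a single deterministic event $G$ with $\mathbb{P}(G)\geq 1-C\delta$ on which the staircase inequality holds pathwise; the probability estimate and the combinatorics are then separated. Throughout write $R:=L^{N}_{t+\delta}-L^{N}_{t-}$ (so the claim is vacuous unless $x\leq R-C\delta^{1/3}$), and for a particle $i$ alive at $t-$ decompose its motion over $[t,t+\delta]$ as $X^{i,N}_{s}=X^{i,N}_{t-}+A^{i}_{s}-\alpha(L^{N}_{s}-L^{N}_{t-})$, where $A^{i}_{s}:=\int_{t}^{s}b(r,X^{i,N}_{r})\,dr+(M^{i}_{s}-M^{i}_{t})$ is \emph{continuous} and $M^{i}$ is the martingale part of $X^{i,N}$. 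The feedback $-\alpha(L^{N}_{s}-L^{N}_{t-})$ is common to all particles, so $A^{i}$ is the only particle-specific displacement, and the whole argument rests on the fact that over a window of length $\delta$ this displacement is $O(\delta^{1/3})$ for all but a vanishing fraction of particles.

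For the event $G$ I would \emph{not} union-bound over the $N$ particles (there is no upper bound on $N$), but instead apply Markov's inequality to empirical counts. Call $i$ bad if either $\sup_{s\in[t,t+\delta]}|M^{i}_{s}-M^{i}_{t}|>c\delta^{1/3}$ or $\sup_{s\leq T}|X^{i,N}_{s}|>\delta^{-2/3}$. Since $\sigma,\rho$ are bounded (Assumption~\ref{Assumptions}), $\langle M^{i}\rangle$ increases by at most $\epsilon^{-2}\delta$ over the window, so a Gaussian tail bound gives $\mathbb{P}(\sup_{s}|M^{i}_{s}-M^{i}_{t}|>c\delta^{1/3})\leq Ce^{-c'\delta^{-1/3}}$, and the uniform-in-$N$ eighth-moment bound $\mathbb{E}[\sup_{s\leq T}|X^{1,N}_{s}|^{8}]\leq C$ from Assumption~\ref{Assumptions} gives $\mathbb{P}(\sup_{s\leq T}|X^{i,N}_{s}|>\delta^{-2/3})\leq C\delta^{16/3}$. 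Taking $G$ to be the event that the number of bad particles alive at $t-$ is at most $2N\delta$, Markov's inequality applied to the two empirical averages yields $\mathbb{P}(G^{c})\leq C\delta^{13/3}\leq C\delta$ for $\delta$ small. On $G$, every good particle satisfies $\sup_{s\in[t,t+\delta]}|A^{i}_{s}|\leq C_{1}\delta^{1/3}$ — the drift contribution being at most $\delta\,C(1+\delta^{-2/3})\leq C\delta^{1/3}$ — and at most $2N\delta\leq N\delta^{1/3}$ particles are bad.

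The deterministic step on $G$ is the heart of the matter. Fix $0<x\leq R-C\delta^{1/3}$ with $C\geq 3$, so $x+3\delta^{1/3}\leq R$, and set $\rho:=\inf\{s\geq t:L^{N}_{s}-L^{N}_{t-}\geq x+3\delta^{1/3}\}\leq t+\delta$, with $\mu:=L^{N}_{\rho-}-L^{N}_{t-}$. The number of particles absorbed in $[t,\rho)$ is $N\mu$, and for such an $i$, evaluating the decomposition at its absorption time (using $X^{i,N}_{\tau^{i,N}}\leq 0$, or $X^{i,N}_{\tau^{i,N}-}\leq\alpha\Delta L^{N}_{\tau^{i,N}}$ in the cascade case, together with $L^{N}_{s}-L^{N}_{t-}<x+3\delta^{1/3}$ for $s<\rho$) gives $X^{i,N}_{t-}\leq\alpha x+(3\alpha+C_{1})\delta^{1/3}$ whenever $i$ is good. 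If $L^{N}$ is continuous at $\rho$ then $\mu=x+3\delta^{1/3}$ and already $N\mu-2N\delta\geq Nx$ good such particles sit in $[0,\alpha x+C\delta^{1/3}]$. If instead $L^{N}$ has a jump $\Delta L^{N}_{\rho}=\kappa/N$ at $\rho$, I also invoke the discrete minimality constraint $\Delta L^{N}_{\rho}=\inf\{k/N:\boldsymbol{\nu}^{N}_{\rho-}([0,\alpha k/N])\leq k/N\}$ (the discrete analogue of Proposition~\ref{Existence_Prop_MinimalityForCommonNoise}): picking an integer $j<\kappa$ just below $N(x-\mu)+3N\delta^{1/3}$, the at least $j+1$ particles alive at $\rho-$ with $X^{i,N}_{\rho-}\leq\alpha j/N$ satisfy $X^{i,N}_{t-}=X^{i,N}_{\rho-}-A^{i}_{\rho-}+\alpha\mu\leq\alpha(j/N+\mu)+C_{1}\delta^{1/3}\leq\alpha x+(3\alpha+C_{1})\delta^{1/3}$, and these are disjoint from the ones absorbed in $[t,\rho)$. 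Adding the two good families and discarding the at most $2N\delta$ bad particles leaves at least $N\mu+N(x-\mu)+3N\delta^{1/3}-1-2N\delta\geq Nx$ particles in $\boldsymbol{\nu}^{N}_{t-}([0,\alpha x+C\delta^{1/3}])$, the last inequality using $N\geq\delta^{-1/3}$ to absorb the single-particle rounding; taking $C:=\max(3,\,3\alpha+C_{1})$ and enlarging it to also accommodate the probability bound completes the proof.

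I expect the main obstacle to be the bookkeeping around the cascade at $\rho$: one must ensure that, even when $L^{N}$ makes a large jump there, the particles responsible were at position $\lesssim\alpha x$ — not $\lesssim 2\alpha x$ — at time $t-$, which is precisely why the loss increment has to be split into a pre-$\rho$ part and the cascade part and why the discrete minimality constraint must be fed through the backward displacement $X^{i,N}_{t-}=X^{i,N}_{\rho-}+\alpha\mu-A^{i}_{\rho-}$ rather than applied naively. The secondary difficulty is reconciling three small parameters — the spatial slack $\delta^{1/3}$, the allowance $2N\delta$ of bad particles, and the $1/N$ rounding — so that they all fit inside the advertised $C\delta^{1/3}$ and $C\delta$. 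Everything else (the Gaussian and eighth-moment tail estimates, the passage from the decomposition to the position bounds) is routine. This is exactly the scheme of \cite[Prop.~5.3]{DIRT_SPA}; the simplification relative to Proposition~\ref{prop: Loss process nice at 0} is that here the common and idiosyncratic noises need not be separated, since $M^{i}$ may be treated as a single Brownian-type increment.
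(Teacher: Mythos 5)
Your proposal is correct and follows essentially the same route as the paper's proof (both modelled on \cite[Prop.~5.3]{DIRT_SPA}): decompose each particle's motion over $[t,t+\delta]$ into a continuous part plus the common feedback, use Markov's inequality together with the eighth-moment bound to control the fraction of particles with large fluctuation, and then run a deterministic count showing that the particles absorbed while the loss increment is below a given level must have been within $\alpha\times(\text{that level})+O(\delta^{1/3})$ of the origin at time $t-$. Your explicit treatment of the cascade at the stopping time via the discrete minimality constraint simply spells out what the paper asserts in its display (3.14), and your slightly different split of the bad events (Gaussian tail for the martingale part, $2N\delta$ allowance of bad particles) is an inessential variation.
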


\begin{proof}
	Fixing $\delta>0$ and $N\geq\delta^{-\frac{1}{3}}$, we note that $N(L_{t+\delta}^{N}-L_{t-}^{N})$ equals the number of particles
	that are absorbed in the interval $[t,t+\delta]$. Let
	\[
	Y_{t,s}^{i}:=\int_{t}^{s}\sigma(r)\rho(r)dB_{r}^{0}+\int_{t}^{s}\sigma(r)\sqrt{1-\rho(r)^{2}}dB_{r}^{i},
	\]
	for $i=1,\ldots,N$, and define the events
	\[
	E_{1}^{i,k}:=\Bigl\{ X_{t-}^{i,N}-\alpha\frac{k}{N}-C\delta(1+\sup_{s\leq t+\delta}|X_{s}^{i}|)-\sup_{s\in[t,t+\delta]}|Y_{t,s}^{i}|\leq0,\;t\leq\tau^{i}\Bigr\},
	\]
	for a given $k$. Then it must be the case that
	\begin{equation}
	\frac{1}{N}\sum_{i=1}^{N}\mathbf{1}_{E_{1}^{i,k}}\geq\frac{k}{N}\quad\text{for}\quad k=0,1,\ldots,N(L_{t+\delta}^{N}-L_{t-}^{N}).\label{eq: E^1,i}
	\end{equation}
	Now fix an arbitrary $\lambda_{0}\leq L_{t+\delta}^{N}-L_{t-}^{N}-2\delta^{\frac{1}{3}}$
	and set
	\[
	k_{0}:=\bigl\lfloor N(\lambda_{0}+2\delta^{\frac{1}{3}})\bigr\rfloor\leq N(L_{t+\delta}^{N}-L_{t-}^{N}).
	\]
	This way, (\ref{eq: E^1,i}) applies to $k_{0}$ and we have $\lambda_{0}\geq\frac{k_{0}}{N}-2\delta^{\frac{1}{3}}$ as well as $\frac{k_0}{N}\geq\lambda_0+2\delta^{\frac{1}{3}}-\frac{1}{N}$.
	Introducing the additional events
	\[
	E_{2}^{i}:=\bigl\{C\delta(1+\sup_{s\leq t+\delta}|X_{s}^{i,N}|)+\sup_{s\in[t,t+\delta]}|Y^i_{t,s}|\geq\delta^{\frac{1}{3}}\bigr\},
	\]
	for $i=1,\ldots,N$, it follows that, on each event $E_{1}^{i,k_{0}}\cap(E_{2}^{i})^{\complement}$, we have
	$X_{t-}^{i}-\alpha\frac{k_{0}}{N}\leq\delta^{\frac{1}{3}}$,
	and hence 
	\[
	X_{t-}^{i}-\alpha\lambda_{0}\leq X_{t-}^{i}-\alpha \bigl(\frac{k_{0}}{N}-2\delta^{\frac{1}{3}} \bigr)\leq(1+2\alpha)\delta^{\frac{1}{3}}.
	\]
	Consequently,
	\begin{align*}
		\boldsymbol{\nu}_{t-}^{N}(\hspace{0.4pt} [ 0,\alpha\lambda_{0}+(1+2\alpha)\delta^{\frac{1}{3}} ] \hspace{0.4pt}) & =\frac{1}{N}\sum_{i=1}^{N}\mathbf{1}_{\{X_{t-}^{i}\leq\alpha\lambda_{0}+(1+2\alpha)\delta^{\frac{1}{3}},\;t\leq\tau^{i}\}}\geq\frac{1}{N}\sum_{i=1}^{N}\mathbf{1}_{E_{1}^{i,k_{0}}}\mathbf{1}_{(E_{2}^{i})^{\complement}}. 
	\end{align*}
	Defining the event $E:=\{\frac{1}{N}\sum_{i=1}^{N}\mathbf{1}_{E^i_2}\leq\delta^{\frac{1}{3}}\}$,
	we deduce that, on $E$,
	\begin{align*}
		\boldsymbol{\nu}_{t-}^{N}(\hspace{0.4pt} [ 0,\alpha\lambda_{0}+(1+2\alpha)\delta^{\frac{1}{3}} ] \hspace{0.4pt}) & \geq\frac{1}{N}\sum_{i=1}^{N}\mathbf{1}_{E_{1}^{i,k_{0}}}-\frac{1}{N}\sum_{i=1}^{N}\mathbf{1}_{E_{2}^{i}}\geq\frac{k_{0}}{N}-\delta^{\frac{1}{3}}\\
	& \geq \bigl(\lambda_{0}+2\delta^{\frac{1}{3}}-\frac{1}{N} \bigr)-\delta^{\frac{1}{3}}=\lambda_{0}+\delta^{\frac{1}{3}}-\frac{1}{N}.
	\end{align*}
	Since we are working with $N\geq\delta^{-\frac{1}{3}}$, we have $\delta^{\frac{1}{3}}-N^{-1}\geq0$,
	so we can finally conclude that
	\begin{equation}
		\boldsymbol{\nu}_{t-}^{N}(\hspace{0.4pt} [ 0,\alpha\lambda_{0}+(1+2\alpha)\delta^{\frac{1}{3}} ]\hspace{0.4pt})\geq\lambda_{0}\qquad\text{on } E,
	\label{eq:on_E}
	\end{equation}
	for our arbitrary choice of $\lambda_{0}\leq L_{t+\delta}^{N}-L_{t-}^{N}-2\delta^{\frac{1}{3}}$.
	Thus, it only remains to observe that there exists a constant $C>0$, independent of $\delta$,
	such that $\mathbb{P}(E^{\complement})\leq C\delta$. To see this,
	we can apply Markov's inequality twice, the Cauchy--Schwarz inequality, and the Burkholder--Davis--Gundy inequality (to $Y^i$) in order to find that
	\begin{align*}
	\mathbb{P}(E^{\complement}) & =\mathbb{P}\Bigl(\frac{1}{N}\sum_{i=1}^{N}\mathbf{1}_{E^i_2}>\delta^{\frac{1}{3}}\Bigr)\leq\delta^{-\frac{1}{3}}\frac{1}{N}\sum_{i=1}^{N}\mathbb{P}(E_{2}^{i})\\
	& \leq c\delta^{5}\mathbb{E}\Bigl[\bigl(1+\sup_{s\leq T}|X_{s}^{1,N}|\bigr)^{8}\Bigr]+c\delta^{-3}\mathbb{E}\Bigl[\sup_{s\in[t,t+\delta]}|Y^1_{t,s}|^{8}\Bigr]
	\leq c_1\delta^{5}+c_1\delta.
	\end{align*}
	Recalling (\ref{eq:on_E}) and taking $C:=\max\{ 2c_1,1+2\alpha\}$ finishes the proof.
\end{proof}

Finally, we note that the convergence of the loss processes will essentially come down to the following property of each particle, highlighting that the Brownian drivers cause the particles to dip strictly below any level immediately after hitting it.
\begin{lem}
	\label{prop:Crossing Prop} Let $X^{1,N}$ be given by \eqref{eq:Particle_System}. For any $t\leq T$ and $\delta>0$,
	we have
	\[	\lim_{\varepsilon\rightarrow0}\limsup_{N\rightarrow\infty}\mathbb{P}\Bigl(\,\inf_{s\leq\delta\land (T-t)}\bigl\{ X_{t+s}^{1,N}-X_{t}^{1,N}\bigr\}>-\varepsilon\Bigr)=0.
	\]
\end{lem}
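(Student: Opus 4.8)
The plan is to reset the particle system at the random time $\tau:=\tau^{1,N}$ with the strong Markov property, bound the post-$\tau$ displacement of the tagged particle from above by a Brownian-type process (the loss term only helps), and then use the elementary fact that a non-degenerate Brownian motion does not stay above $-\varepsilon$ on a time interval of fixed positive length, while discarding the (asymptotically negligible) event that $\tau$ falls very close to the horizon $T$.

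First I would observe that $\tau$ is a stopping time for the filtration generated by the $\mathbb{R}^{N+1}$-valued Brownian motion $(B^0,\dots,B^N)$ and the initial data, so by the strong Markov property the shifted increments $\tilde B^j_s:=B^j_{\tau+s}-B^j_\tau$ form again an $\mathbb{R}^{N+1}$-valued Brownian motion, independent of $\mathcal F_\tau$. Writing $D_s:=\int_0^s b(\tau+r,X^{1,N}_{\tau+r})\,dr$ and $M_s:=\int_0^s\sigma(\tau+r)\bigl(\sqrt{1-\rho(\tau+r)^2}\,d\tilde B^1_r+\rho(\tau+r)\,d\tilde B^0_r\bigr)$, equation \eqref{eq:Particle_System} gives $X^{1,N}_{\tau+s}-X^{1,N}_\tau=D_s+M_s-\alpha(L^N_{\tau+s}-L^N_\tau)\le D_s+M_s$, since $L^N$ is non-decreasing; this one-sided bound is the crucial simplification, as it makes the (possibly cascading) dynamics of $L^N$ after $\tau$ irrelevant. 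Because $\sigma$ is bounded below by the ellipticity constant $\epsilon$ of Assumption \ref{Assumptions} and $\sigma,\rho$ are deterministic, $\langle M\rangle_s=\int_0^s\sigma(\tau+r)^2\,dr$ is a deterministic (given $\tau$) strictly increasing clock with $\langle M\rangle_s\ge\epsilon^2 s$, so Dambis--Dubins--Schwarz writes $M_s=W_{\langle M\rangle_s}$ for a Brownian motion $W$ which, being a functional of $(\tilde B^0,\tilde B^1)$ and $\tau$ whose conditional law given $\mathcal F_\tau$ is always that of a standard Brownian motion, is independent of $\mathcal F_\tau$; in particular $\inf_{s\le\beta}M_s\le\inf_{u\le\epsilon^2\beta}W_u$ for every $\beta>0$, and the right-hand side has the law of $-\epsilon\sqrt{\beta}\,|Z|$ with $Z$ standard normal.

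Next, on the event $\{\sup_{t\le T}|X^{1,N}_t|\le K\}$ the linear growth of $b$ yields $\sup_{s\le\beta}|D_s|\le C\beta(1+K)$, so for any $\beta\le\delta\wedge(T-\tau)$ the event $\{\inf_{s\le\delta\wedge(T-\tau)}(X^{1,N}_{\tau+s}-X^{1,N}_\tau)>-\varepsilon\}$ forces $\inf_{u\le\epsilon^2\beta}W_u>-\varepsilon-C\beta(1+K)$; choosing $\beta=\varepsilon$ (legitimate once $\varepsilon<\delta$ and $\tau\le T-\varepsilon$), its probability is $\mathbb{P}\bigl(|Z|<(1+C(1+K))\sqrt{\varepsilon}/\epsilon\bigr)\to0$ as $\varepsilon\downarrow0$. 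The part of the event on which $\sup_{t\le T}|X^{1,N}_t|>K$ has probability $O(K^{-8})$ uniformly in $N$, by the eighth-moment bound noted below Assumption \ref{Assumptions}, and the part on which $\beta>\delta\wedge(T-\tau)$ while $\tau<T$ is simply $\{\tau^{1,N}\in(T-\varepsilon,T)\}$ for $\varepsilon<\delta$. Collecting terms, for $\varepsilon<\delta$,
\begin{align*}
\mathbb{P}\Bigl(&\inf_{s\le\delta\wedge(T-\tau)}\bigl\{X^{1,N}_{\tau+s}-X^{1,N}_\tau\bigr\}>-\varepsilon,\ \tau<T\Bigr)\\
&\le\ \frac{\mathbb{E}\bigl[\sup_{t\le T}|X^{1,N}_t|^8\bigr]}{K^{8}}+\mathbb{P}\!\left(|Z|<\frac{(1+C(1+K))\sqrt{\varepsilon}}{\epsilon}\right)+\mathbb{P}\bigl(\tau^{1,N}\in(T-\varepsilon,T)\bigr),
\end{align*}
and taking $\limsup_N$, then $\varepsilon\downarrow0$, then $K\to\infty$, the whole statement reduces to $\lim_{\varepsilon\downarrow0}\limsup_N\mathbb{P}\bigl(\tau^{1,N}\in(T-\varepsilon,T)\bigr)=0$.

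I expect this last reduction to be the main obstacle: on $\{\tau^{1,N}\in(T-\varepsilon,T)\}$ the remaining window $\delta\wedge(T-\tau^{1,N})$ is shorter than $\varepsilon$, too short for the Brownian term to produce a dip of size $\varepsilon$, so the tagged-particle picture alone is not enough and one must lean on the a priori control of $L^N$ near the fixed horizon. By exchangeability $\mathbb{P}(\tau^{1,N}\in(T-\varepsilon,T))=\mathbb{E}[L^N_{T-}-L^N_{T-\varepsilon}]$, and I would estimate the right-hand side by adapting the loss estimates already available: a particle absorbed in $(T-\varepsilon,T)$ either reaches the origin continuously — which over a window of length $\varepsilon$ requires a Brownian fluctuation at least as large as its distance to the origin at time $T-\varepsilon$, an event handled much as in the proof of Proposition \ref{prop: Loss process nice at 0} — or it is swept up in a cascade, whose size is governed by Lemma \ref{Minimal_jumps_particle}. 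Everything else (the strong Markov reset, the sign of the loss drift, the collapse of the idiosyncratic and common noise into a single time-changed Brownian motion independent of the past, and the reflection principle) is short and routine.
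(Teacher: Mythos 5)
Your main argument is, up to the level of detail, exactly the paper's: discard the loss term using monotonicity of $L^N$, bound the drift on the event $\{\sup_{t\leq T}|X^{1,N}_t|\leq K\}$ via the eighth-moment bound, and apply the strong Markov property at $\tau^{1,N}$ to reduce the question to a small-ball estimate for a non-degenerate time-changed Brownian motion; the paper compresses all of this into two sentences, asserting that the resulting bound is $o(1)$ as $\varepsilon\downarrow 0$ uniformly in $N$. You have also correctly isolated the one genuinely delicate point, which the paper's proof passes over in silence: that uniformity claim is not self-evident on the event $\{\tau^{1,N}\in(T-\varepsilon,T)\}$, where the observation window $\delta\wedge(T-\tau^{1,N})$ is too short for the Brownian part to produce an $\varepsilon$-dip.

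The gap is in your treatment of that residual event. The reduction to $\lim_{\varepsilon\downarrow 0}\limsup_{N}\mathbb{E}[L^N_{T-}-L^N_{T-\varepsilon}]=0$ is not something the paper's toolbox delivers, and it is not clear it is even true: nothing in Assumption \ref{Assumptions} prevents the (subsequential) limit loss from jumping at, or the prelimit cascades from accumulating arbitrarily close to, the fixed deterministic time $T$, so this double limit is not controlled by anything established elsewhere in the paper. The two results you invoke do not supply it: the argument of Proposition \ref{prop: Loss process nice at 0} hinges on the hypothesis $\nu_0(\hspace{0.4pt}[0,x]\hspace{0.4pt})\leq\frac{1-\epsilon}{2}\alpha^{-1}x$ for the \emph{initial} law, which has no analogue for the random empirical measure $\boldsymbol{\nu}^N_{T-\varepsilon}$; and Lemma \ref{Minimal_jumps_particle} runs in the wrong direction (it lower-bounds the mass near the origin given a large loss increment; it does not upper-bound the loss increment). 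A route that does close the argument is to \emph{not} discard the loss term on the residual event: if $\tau^{1,N}\in(T-\varepsilon,T)$ and $\inf_{s\leq T-\tau^{1,N}}\{X^{1,N}_{s+\tau^{1,N}}-X^{1,N}_{\tau^{1,N}}\}>-\varepsilon$, then $\alpha\bigl(L^N_{T-}-L^N_{\tau^{1,N}}\bigr)<\varepsilon+\sup_{s\leq\varepsilon}|D_s|+\sup_{s\leq\varepsilon}|M_s|$, which outside an event of probability $o(1)$ (uniformly in $N$) is $O(\sqrt{\varepsilon}\log(1/\varepsilon))$; by exchangeability, the probability that particle $1$ is absorbed strictly before $T$ with so little loss accruing afterwards (i.e., that it is among the last few absorbed) can then be bounded, with a little care about particles absorbed simultaneously in a single cascade. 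As written, your proof stops short of this, and the step you defer is precisely the one that carries the content.
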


\begin{proof}
Since $L^{N}$ can only cause the particles to jump down, we can simply neglect it in the dynamics. Using also the linear growth bound on the drift from Assumption \ref{Assumptions}, we can therefore deduce that probability in question is controlled by
	\begin{equation}\label{eq:crossing_estimate}
	\mathbb{P}\Bigl(\,\inf_{s\leq\delta\land(T-t)}\bigl\{ sC \lambda+Y^1_{s+t}-Y^1_{t} \bigr\}>-\varepsilon \Bigr)+\mathbb{P}\bigl(\sup_{s\leq T}|X_{s}^{1,N}|>\lambda \bigr),
	\end{equation}
	for any given $\lambda>0$, where we have set
	\[
	Y_s^1:=\int_{0}^{s} \sigma(r) \sqrt{1-\rho(r)^2} dB_r^1 +  \int_{0}^{s} \sigma(r)\rho(r) dB_r^0.
	\]
	Since $Y^1$ has the law of a time-changed Brownian motion, the first term in \eqref{eq:crossing_estimate} is of order $o(1)$ as $\varepsilon \rightarrow 0$, uniformly in $N \geq1$, for any fixed $\lambda>0$. Furthermore, sending $\lambda \rightarrow \infty$, the second term in \eqref{eq:crossing_estimate} vanishes uniformly in $N\geq 1$, and hence the result follows.
\end{proof}

\subsection{Compactness, Continuity, and Convergence}\label{Sec:Continuity}

To establish the convergence of the finite particle system, we follow the ideas of \cite{DIRT_SPA}. In particular, we extend the particles
from (\ref{eq:Particle_System}) to $[0,\bar{T}]$, for a fixed $\bar{T}>T$, by adding purely Brownian noise on $(T,\bar{T}]$. This amounts to replacing the empirical measures $\mathbf{P}^N$ by
\begin{equation}
\label{eq:empirical_measures}
\tilde{\mathbf{P}}^{N}:=\frac{1}{N}\sum_{i=1}^{N}\delta_{\tilde{X}_{\cdot}^{i,N}}\quad\text{where}\quad\tilde{X}_{t}^{i,N}:=\begin{cases}
X_{t}^{i,N}, & t\in[0,T]\\
X_{T}^{i,N}+B_{t}^{i}-B_{T}^{i}, & t\in(T,\bar{T}].
\end{cases}
\end{equation}
For simplicity of notation, we will drop the `$\sim$' and simply write $X^{i,N}_{t}$ and $\mathbf{P}^{N}$, with the understanding
that $X_{t}^{i,N}$ is given by $\tilde{X}_{t}^{i,N}$ when we
are working on the full interval $[0,\bar{T}]$. Notice that this
construction automatically extends Proposition \ref{prop: Loss process nice at 0}
to hold at the endpoint $\bar{T}$, and the choice of Brownian
noise on $(T,\bar{T}]$ ensures the validity of Lemma \ref{prop:Crossing Prop}
with $\bar{T}$ in place of $T$ (of course, we are only actually interested in the dynamics of the system up to time $T$).

As in \cite[Sec.~4.1]{DIRT_SPA}, we endow $D_{\mathbb{R}}=D_{\mathbb{R}}[0,\bar{T}]$ with Skorokhod's M1 topology because it will allow us to circumvent irregularities in the loss, $L^N$, by virtue of its monotonicity. For properties of this topology we will be referring to \cite{ledger_2016, whitt} (in \cite{whitt} it is called the strong M1 topology and denoted $SM_1$). Importantly, these properties rely on the members of $D_{\mathbb{R}}$ being left-continuous at the terminal time, which is the reason for the continuous extension of the particle system to $\bar{T}$. Furthermore, we emphasise that $(D_{\mathbb{R}},\text{M1})$ is a Polish space \cite[Thm.~12.8.1]{whitt} and that its Borel sigma algebra is generated by the marginal projections \cite[Thm.~11.5.2]{whitt}.

Similarly to the analysis in \cite[Lemmas 5.4 and 5.5]{DIRT_SPA}, once we have a result such as Proposition \ref{prop: Loss process nice at 0} at both endpoints $t=0$ and $t=\bar{T}$, the tightness of the empirical measures becomes an easy consequence of the properties of the M1 topology (see Proposition \ref{lem:tightness}). From Prokhorov's theorem \cite[Thm.~5.1]{billingsley}, we thus obtain a weakly convergent subsequence of the pair $(\mathbf{P}^N,B^0)$, and the goal is then to relate the resulting limit point to a solution of (\ref{eq:Relaxed McKean-Vlasov}).

Before seeking to characterize the limit points $(\mathbf{P}^*,B^0)$, our first task is to ensure that $L^N=\mathbf{P}^N(t\geq\hat{\tau})$ converges to $\mathbf{P}^*(t\geq\hat{\tau})$ whenever $\mathbf{P}^N$ converges $\mathbf{P}^*$. This is achieved through a continuity result (namely Lemma \ref{prop:loss_continuity}), which is an analogue of \cite[Lemma 5.6, Prop.~5.8, Lemma 5.9]{DIRT_SPA}, and its proof follows by similar arguments after observing that it suffices to rely on Lemma \ref{prop:Crossing Prop}.

Finally, we need to characterize $\mathbf{P}^*$ as the conditional law of a process $X$ satisfying the desired conditional McKean--Vlasov equation such that $(\mathbf{P}^*,B^0)$ is independent of the idiosyncratic Brownian motion $B$ (which is constructed as part of the solution). For this part, we rely on a martingale argument (Lemma 3.11 and Proposition 3.12) which---although close in spirit---differs from the approach in \cite{DIRT_SPA} and is useful for dealing with the common noise. Compared to \cite{DIRT_SPA}, another advantage of this argument is that it is not sensitive to the specific form of the volatility, $\sigma$, whereas the approach in \cite{DIRT_SPA} is tailored to a constant volatility. Aside from our approach to the independence result for $\mathbf{P}^*$ and $B$ in Lemma 3.13, all our arguments extend to a bounded and non-degenerate Lipschitz continuous $x\mapsto\sigma(t,x)$, so without the common noise we easily obtain such a generalization of the results in \cite{DIRT_SPA}, but we leave out the details of this. Instead we push ahead and implement the plan outlined above, starting with the tightness of the empirical measures in the $\text{M1}$ topology. 

\begin{prop}[Tightness of the empirical measures]\label{lem:tightness} Let $\mathfrak{T}^{wk}_{\emph{M1}}$ denote the topology of weak convergence on $\mathcal{P}(D_{\mathbb{R}})$ induced by the $\emph{M1}$ topology on $D_{\mathbb{R}}$. Then the empirical measures
	$(\mathbf{P}^{N})_{N\geq1}$ are tight on $(\mathcal{P}(D_{\mathbb{R}}),\mathfrak{T}_{\emph{M1}}^{wk})$ under Assumption \ref{Assumptions}.
\end{prop}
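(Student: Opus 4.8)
The plan is to reduce tightness of the random measures $(\mathbf{P}^N)_{N\geq1}$ on $(\mathcal{P}(D_{\mathbb{R}}),\mathfrak{T}^{wk}_{\text{M1}})$ to a single-particle tightness criterion in the M1 topology, and then to verify that criterion using the estimates already established. First I would recall the general principle (see e.g.~Sznitman's lectures, or the formulation adapted to M1 in \cite[Sec.~4.1]{DIRT_SPA}) that, since the particles $X^{i,N}_\cdot$ are exchangeable, the sequence of empirical measures $(\mathbf{P}^N)$ is tight on $\mathcal{P}(D_{\mathbb{R}})$ (with the weak topology induced by M1) if and only if the laws of the single tagged particle $X^{1,N}_\cdot$ are tight on $(D_{\mathbb{R}},\text{M1})$. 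So the task becomes: show that $\{\mathrm{Law}(X^{1,N}_\cdot) : N\geq 1\}$ is relatively compact in $(D_{\mathbb{R}}[0,\bar T],\text{M1})$.

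For tightness in the M1 topology I would invoke the standard characterisation (see \cite[Thm.~12.12.3]{whitt}, or \cite{ledger_2016}): a family of laws on $D_{\mathbb{R}}[0,\bar T]$ is tight in M1 provided (a) a compact containment / modulus-of-boundedness condition holds, namely $\lim_{a\to\infty}\sup_{N}\mathbb{P}(\sup_{t\leq \bar T}|X^{1,N}_t| > a)=0$, and (b) the oscillation functional adapted to M1 is controlled, i.e.~$\lim_{\delta\downarrow 0}\limsup_{N}\mathbb{E}[w^\prime_{\bar T}(X^{1,N}_\cdot,\delta)]=0$, together with control of the boundary oscillations $v(X^{1,N}_\cdot,0,\delta)$ and $v(X^{1,N}_\cdot,\bar T,\delta)$ at the two endpoints. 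Now I would exploit the structure of \eqref{eq:Particle_System}: writing $X^{1,N}_t = X^{1,N}_0 + \int_0^t b(s,X^{1,N}_s)ds + M^{1,N}_t - \alpha L^N_t - (\text{jump absorption correction})$, the first three pieces together form a continuous semimartingale whose increments are controlled uniformly in $N$ by the Lipschitz/linear-growth bound on $b$ and the uniform eighth-moment bound from Assumption \ref{Assumptions} (giving (a) directly via BDG plus Gr\"onwall, and giving the continuous contribution to the M1 oscillation). The only genuinely non-continuous, non-uniformly-controlled piece is $-\alpha L^N_t$; but $L^N$ is \emph{monotone increasing} and bounded by $1$, and monotone paths have M1 oscillation functional $w^\prime$ identically zero — this is precisely the feature of M1 that is being exploited. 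Hence the $L^N$ term contributes nothing to $w^\prime_{\bar T}(\,\cdot\,,\delta)$, and the M1 oscillation of $X^{1,N}_\cdot$ on the interior is controlled by the continuous part alone.

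It remains to handle the two endpoints. At the terminal time $\bar T$, the continuous Brownian extension on $(T,\bar T]$ introduced in \eqref{eq:empirical_measures} makes the paths left-continuous at $\bar T$ and gives the required control of $v(\,\cdot\,,\bar T,\delta)$ by an ordinary Brownian modulus-of-continuity estimate, uniformly in $N$. At the initial time $t=0$, the boundary oscillation $v(X^{1,N}_\cdot,0,\delta)$ is governed by the size of $\alpha L^N_\delta$ plus the Brownian fluctuation of the driving noise over $[0,\delta]$; the Brownian part is $O(\sqrt{\delta\log\log\delta^{-1}})$ uniformly in $N$, and the loss part is exactly what Proposition \ref{prop: Loss process nice at 0} controls, namely $\limsup_N \mathbb{P}(L^N_\delta \geq \alpha^{-1}\log(\delta^{-1})\delta^{1/2})=0$ for small $\delta$, which tends to $0$ as $\delta\downarrow 0$. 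Combining these estimates yields the M1 tightness of the tagged-particle laws, and hence tightness of $(\mathbf{P}^N)_{N\geq1}$ on $(\mathcal{P}(D_{\mathbb{R}}),\mathfrak{T}^{wk}_{\text{M1}})$.

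The main obstacle is conceptual rather than computational: one must be careful that the reduction ``empirical-measure tightness $\Leftrightarrow$ tagged-particle tightness'' is valid in the M1 setting — this requires knowing that $(D_{\mathbb{R}},\text{M1})$ is Polish with Borel $\sigma$-algebra generated by the coordinate projections (both recorded in the excerpt), so that $\mathcal{P}(D_{\mathbb{R}})$ is a well-behaved Polish space and Prokhorov applies — and that the M1 oscillation functional genuinely vanishes on monotone paths, so that the badly-behaved loss term $L^N$ really does drop out of the tightness criterion. Once these two points are in hand, everything else is a routine assembly of the moment bound from Assumption \ref{Assumptions}, standard Brownian estimates, and Proposition \ref{prop: Loss process nice at 0}.
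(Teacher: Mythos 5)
Your proposal is correct and follows essentially the same route as the paper: reduction to tagged-particle tightness via exchangeability and Sznitman's classical result (using that $(D_{\mathbb{R}},\text{M1})$ is Polish), an M1 tightness criterion in which the monotone loss term $L^N$ contributes nothing to the oscillation functional so that only the continuous semimartingale part needs moment control, and endpoint control at $t=0$ via Proposition \ref{prop: Loss process nice at 0} and at $\bar T$ via the purely Brownian extension. The only cosmetic difference is that the paper phrases the interior oscillation condition as a quantitative probability bound on the functional $H_{\mathbb{R}}$ following \cite{ledger_2016} rather than via Whitt's modulus $w'$, but these are equivalent formulations of the same criterion.
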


\begin{proof}
	Since $(D_{\mathbb{R}},\text{M1})$ is a Polish space, so is $(\mathcal{P}(D_{\mathbb{R}}),\mathfrak{T}_{\text{M1}}^{wk})$. Therefore, by a classical result \cite[Ch.I, Prop.~2.2]{sznitman}, it suffices to verify that $(X^{1,N})_{N\geq1}$ is tight on $(D_{\mathbb{R}},\text{M1})$. Let 
	\[
	H_{\mathbb{R}}(x_1,x_2,x_3):=\inf_{\lambda \in [0,1]}|x_2-(1-\lambda)x_1+\lambda x_3|.
	\]
Following \cite[Sec.~4]{ledger_2016}, we can deduce the tightness of $(X^{1,N})_{N\geq1}$ by showing that
	\begin{equation}
	\label{eq:condition_1}
\mathbb{P}\bigl(H_{\mathbb{R}}(X_{t_1}^{1,N},X_{t_2}^{1,N},X_{t_3}^{1,N})\geq \delta \bigr) \leq C \delta^{-4} |t_3-t_1|^{2}	
	\end{equation}
	for all $N\geq1$, $\delta>0$ and $0\leq t_1< t_2<t_3\leq \bar{T}$, along with
	\begin{equation}
	\label{eq:condition_2}
	\lim_{\delta\downarrow0}\lim_{N\rightarrow\infty}\mathbb{P}\Bigl(\sup_{t\in(0,\delta)}\bigl| X_t^{1,N}-X_0^{1,N} \bigr|+\sup_{t\in(\bar{T}-\delta,\bar{T})}\bigl| X_{\bar{T}}^{1,N}-X_t^{1,N} \bigr|\geq\varepsilon\Bigr)=0
	\end{equation}
	for every $\varepsilon>0$. For the first condition, observe that
	\[
	H_{\mathbb{R}}(X_{t_1}^{1,N},X_{t_2}^{1,N},X_{t_3}^{1,N}) \leq| Z_{t_1}-Z_{t_2}| + | Z_{t_2}-Z_{t_3}|+\inf_{\lambda\in[0,1]} | L_{t_2}^N -(1-\lambda)L_{t_1}^N -\lambda L_{t_3}^N|,
	\]
	where $Z$ is given by
	\[
	dZ_t= b(t,X_t^{1,N})dt + \sigma(t) \sqrt{1-\rho(t)^2}dB_t^{1}+\sigma(t) \rho(t)dB_t^{0}.
	\]
	Since $L^N$ is increasing, the final term on the right-hand side is zero and hence (\ref{eq:condition_1}) follows easily from Markov's inequality by controlling the increments of $Z$. Finally, (\ref{eq:condition_2}) holds by virtue of Proposition \ref{prop: Loss process nice at 0} and the fact that it also applies at the artificial endpoint $\bar{T}$, as remarked above.
\end{proof}

 Recall that each $\mathbf{P}^{N}$ is a random probability
 measure on $(D_{\mathbb{R}},\mathcal{B}(D_{\mathbb{R}}))$, and that $L_t^N=\mathbf{P}^{N}(t\geq\hat{\tau})$,
 where $\hat{\tau}$ is the first hitting time of zero for the canonical process on $D_{\mathbb{R}}$. That is, written as a random variable on $D_{\mathbb{R}}$, we have
 \[
 \hat{\tau}(\zeta)=\inf \{ t>0 : \zeta_t \leq 0 \}, \qquad \text{for every } \zeta\in D_{\mathbb{R}}.
 \]
 The next lemma provides a continuity result for $L^N$ with respect to limit points of $\mathbf{P}^{N}$. It will play a central role in our further weak convergence analysis.
\begin{lem}
	[Continuity result for the loss]\label{prop:loss_continuity}Suppose $\mathbf{P}^{N}$
	\negthinspace{}converges weakly to $\mathbf{P}^{*}$ on $(\mathcal{P}(D_{\mathbb{R}}),\mathfrak{T}_{\text{\emph{M1}}}^{wk})$
	and let $\mathbb{Q}^{*}:=\text{\emph{Law}}(\mathbf{P}^{*})$. At $\mathbb{Q}^{*}$-a.e.~$\mu$
	in $\mathcal{P}(D_{\mathbb{R}})$, the mapping $\mu\mapsto\mu(t\geq\hat{\tau})$
	is continuous with respect to $\mathfrak{T}_{\text{\emph{M1}}}^{wk}$,
	for all $t\in\mathbb{T}^{\mu}$, where $\mathbb{T}^\mu$
	is the set of continuity points of $t\mapsto\mu(t\geq\hat{\tau})$.
\end{lem}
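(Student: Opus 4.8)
The plan is to turn the assertion into an almost-sure statement about the weak limit $\mathbf{P}^{*}$ and then appeal to the mapping theorem for weak convergence. For $t\in[0,\bar T)$, write $h_t(\zeta):=\mathbf{1}_{\{\hat\tau(\zeta)\le t\}}$, a bounded Borel function on $(D_{\mathbb{R}},\mathrm{M1})$, so that $\mu\mapsto\mu(t\ge\hat\tau)=\int h_t\,d\mu$, and let $\mathcal{D}_t\subseteq D_{\mathbb{R}}$ be its set of $\mathrm{M1}$-discontinuity points. If $\mu(\mathcal{D}_t)=0$, then applying \cite[Thm.~2.7]{billingsley} to $h_t$ shows that any $\mu_n\to\mu$ in $\mathfrak{T}_{\mathrm{M1}}^{wk}$ gives $\mu_n\circ h_t^{-1}\Rightarrow\mu\circ h_t^{-1}$ on $\mathbb{R}$, whence $\mu_n(t\ge\hat\tau)\to\mu(t\ge\hat\tau)$ because $h_t$ is bounded; that is, $\mu\mapsto\mu(t\ge\hat\tau)$ is $\mathfrak{T}_{\mathrm{M1}}^{wk}$-continuous at $\mu$. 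So it suffices to prove that, for $\mathbb{Q}^{*}$-a.e.\ $\mu$, one has $\mu(\mathcal{D}_t)=0$ for every $t\in\mathbb{T}^{\mu}\cap[0,\bar T)$.

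The first step is to pin down $\mathcal{D}_t$, which is purely topological and runs exactly as in \cite[Lem.~5.6]{DIRT_SPA}: using that $t<\bar T$ leaves room for the $\mathrm{M1}$ time-parametrisations near the endpoint, one checks that $\zeta$ is a continuity point of $h_t$ with value $0$ whenever $\inf_{0\le r\le t}(\zeta_r\wedge\zeta_{r-})>0$, and a continuity point with value $1$ whenever $\zeta_{r_0}<0$ or $\zeta_{r_0-}<0$ for some $r_0<t$ (a strict crossing being stable under $\mathrm{M1}$ perturbation). Hence $\mathcal{D}_t\subseteq\mathcal{G}_t\cup\{\hat\tau=t\}$, where $\mathcal{G}_t$ denotes the \emph{grazing set} of paths that reach level $0$ (possibly only through a left limit) at some time $\le t$ without ever dipping strictly below $0$ in a right-neighbourhood of the first such time. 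The set $\{\hat\tau=t\}$ is dealt with at once: as $t\in\mathbb{T}^{\mu}$ is a continuity point of the non-decreasing function $s\mapsto\mu(s\ge\hat\tau)$, we have $\mu(\hat\tau=t)=0$. Since $\mathcal{G}_t$ is non-decreasing in $t$, it thus remains to show that, for $\mathbb{Q}^{*}$-a.e.\ $\mu$ and every rational $t<\bar T$, $\mu(\mathcal{G}_t)=0$; equivalently, $\mathbb{E}[\mathbf{P}^{*}(\mathcal{G}_t)]=0$, since $\mathbf{P}^{*}$ has law $\mathbb{Q}^{*}$.

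This is the crux, and it is where Lemma~\ref{prop:Crossing Prop} comes in (applied with $\bar T$ in place of $T$, which is legitimate thanks to the Brownian extension on $(T,\bar T]$), in the spirit of \cite[Prop.~5.8, Lem.~5.9]{DIRT_SPA}. Fix a rational $t<\bar T$ and $\delta\in(0,\bar T-t)$. A routine $\mathrm{M1}$ computation produces an \emph{open} set $U_{\varepsilon,\delta}\supseteq\mathcal{G}_t$, assembled from sub- and super-level sets of the $\mathrm{M1}$-continuous functionals recording the infimum of the completed graph of $\zeta$ over interior time-windows, such that, up to an error $o_\varepsilon(1)$ that is uniform in $N$, $\mathbb{E}[\mathbf{P}^{N}(U_{\varepsilon,\delta})]=\mathbb{P}(X^{1,N}\in U_{\varepsilon,\delta})$ is dominated by the probability estimated in Lemma~\ref{prop:Crossing Prop}. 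Since $\nu\mapsto\nu(U_{\varepsilon,\delta})$ is bounded and lower semicontinuous for $U_{\varepsilon,\delta}$ open, the weak convergence $\mathbf{P}^{N}\Rightarrow\mathbf{P}^{*}$ yields $\mathbb{E}[\mathbf{P}^{*}(\mathcal{G}_t)]\le\mathbb{E}[\mathbf{P}^{*}(U_{\varepsilon,\delta})]\le\liminf_{N}\mathbb{E}[\mathbf{P}^{N}(U_{\varepsilon,\delta})]$, and letting $\varepsilon\downarrow0$ the right-hand side vanishes by Lemma~\ref{prop:Crossing Prop}. Hence $\mathbb{E}[\mathbf{P}^{*}(\mathcal{G}_t)]=0$; a countable union over rational $t<\bar T$ then gives $\mu(\mathcal{G}_t)=0$ for all $t<\bar T$ and $\mathbb{Q}^{*}$-a.e.\ $\mu$, which completes the argument.

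The main obstacle is exactly this construction of $U_{\varepsilon,\delta}$: the grazing set is naturally described through the $\mathrm{M1}$-discontinuous maps $\zeta\mapsto\hat\tau(\zeta)$ and $\zeta\mapsto\zeta_{\hat\tau(\zeta)}$, so the work lies in covering it by a genuinely $\mathrm{M1}$-open set whose prelimit mass is still controlled by the crossing estimate, which is what allows the Portmanteau inequality to transfer the bound to $\mathbf{P}^{*}$. This is routine given \cite{DIRT_SPA}, and is facilitated by the fact that Lemma~\ref{prop:Crossing Prop} is formulated through the increments $X^{1,N}_{s+\tau^{1,N}}-X^{1,N}_{\tau^{1,N}}$ after the hitting time, which are unaffected by the common Brownian driver $B^{0}$ and so carry over verbatim from the idiosyncratic setting; the underlying $\mathrm{M1}$ facts are standard, as in \cite{ledger_2016,whitt}.
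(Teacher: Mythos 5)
Your proposal is correct in substance, and its second half takes a genuinely different route from the paper. The first halves coincide: both arguments hinge on showing that $\mathbf{P}^{*}$ almost surely charges no ``grazing'' paths, and both obtain this from Lemma \ref{prop:Crossing Prop} via a Portmanteau-type transfer from $\mathbf{P}^{N}$ to $\mathbf{P}^{*}$ (the paper writes this as the $\liminf$ inequality in \eqref{eq:Crossing prop for P*} without exhibiting the open set; you make the open cover $U_{\varepsilon,\delta}$ explicit but likewise defer its construction to \cite{DIRT_SPA} --- the two are at the same level of detail here, and your observation that the increments $X^{1,N}_{s+\tau^{1,N}}-X^{1,N}_{\tau^{1,N}}$ are insensitive to $B^{0}$ is exactly why the idiosyncratic argument carries over). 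Where you diverge is in converting the support property into continuity of $\mu\mapsto\mu(t\geq\hat{\tau})$: you characterise the M1-discontinuity set of $h_t=\mathbf{1}_{\{\hat{\tau}\leq t\}}$ as contained in $\mathcal{G}_t\cup\{\hat{\tau}=t\}$ and invoke the mapping theorem \cite[Thm.~2.7]{billingsley}, whereas the paper deliberately avoids that pointwise characterisation: it uses Skorokhod representation, the fact that $\inf_{s\leq t}Z^{N}_{s}\to\inf_{s\leq t}Z_{s}$ only for Lebesgue-a.e.\ $t$ \cite[Thm.~13.4.1]{whitt}, integration in time, and M1-relative compactness of the monotone truncated losses to upgrade to pointwise convergence at continuity points. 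Your route is shorter and conceptually cleaner, but it concentrates the burden in the topological claim $\mathcal{D}_t\subseteq\mathcal{G}_t\cup\{\hat{\tau}=t\}$ (in particular the ``value $0$'' direction needs uniform control of approximating paths on $[0,t]$, which is where $t<\bar{T}$ and the exclusion of $\{\hat{\tau}=t\}$ really enter --- note that $\inf_{r\le t}(\zeta_r\wedge\zeta_{r-})>0$ alone does not force $h_t(\zeta)=0$ when $\hat{\tau}(\zeta)=t$ is not attained); the paper's route buys freedom from that characterisation at the cost of the longer time-averaging argument. Both are valid given the background facts from \cite{DIRT_SPA,whitt}.
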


\begin{proof}We set
	\[
	Y_{t}:=\int_{0}^{t}\sigma(s)\sqrt{1-\rho(s)^{2}}dB_{s}^{1}+\int_{0}^{t}\sigma(s)\rho(s)dB_{s}^{0},\quad t\geq0,
	\]
and introduce the joint laws
	\begin{equation}\label{eq:joint_laws}
	P_{X,Y}^{1,N}:=\text{Law}(X^{1,N},Y^{1}),
	\end{equation}
	for $N\geq 1$. By the proof of Proposition \ref{lem:tightness}, the sequence $(X^{1,N})_{N\geq 1}$ is tight on $(D_{\mathbb{R}},\text{M1})$, so the joint laws \eqref{eq:joint_laws} are also tight. Passing to a subsequence, still indexed by $N$, we thus have  $\mathbf{{P}}^{N}\Rightarrow\mathbf{{P}}^{*}$ and, in the sense of weak convergence of measures, $P_{X,Y}^{1,N}\rightarrow P_{X,Y}^{1,*}$
	for a limit point $P_{X,Y}^{1,*}\in\mathcal{{P}}(D_{\mathbb{{R}}}\times C_{\mathbb{{R}}})$.
Let $P_{X}^{1,*}$ and $P_{Y}^{1,*}$ denote the marginal laws. Since the particles $X^{i,N}$, for $i=1,\ldots,N$, have the same law, we get
	$\mathbb{{E}}[\langle\mathbf{{P}}^{N},f\rangle]=\langle P_{X,Y}^{1,N},f\rangle$ 
	for any bounded continuous $f:(D_{\mathbb{R}},\text{M1}) \rightarrow\mathbb{{R}}$.
	For any such $f$, the linear functional $\mu\mapsto\langle\mu,f\rangle$ is continuous on $\mathcal{
	P}(D_{\mathbb{R}})$ for the induced topology of weak convergence, so the convergence of $\mathbf{{P}}^{N}$ and $P_{X,Y}^{1,N}$, and uniqueness of limits, allows us to conclude that
	\[
	\mathbb{{E}}[\mathbf{{P}}^{*}(E)]=P_{X}^{1,*}(E)=P_{X,Y}^{1,*}(E\times C_{\mathbb{{R}}})
	\]
	for all events $E\in\mathcal{{B}}(D_{\mathbb{{R}}})$. Noting that the set of times
	\[
	\mathbb{{T}}:=\{t\in[0,\bar{T}]:\mathbb{{E}}[\mathbf{{P}}^{*}(\eta_{t}=\eta_{t-})]=1\}
	\]
	is co-countable, we can consider the Borel set $E^*\in\mathcal{{B}}(D_{\mathbb{{R}}}\times C_{\mathbb{{R}}})$
	given by
	\[
	E^*:=\bigcap_{q\leq r,\,q,r\in\mathbb{{Q}}\cap\mathbb{{T}}}\{(\eta,y):\eta_{r}-\eta_{q}\leq y_{r}-y_{q}+C\sup_{s\leq \bar{T}} |\eta_s|(r-q) \},
	\]
	where $C>0$ is the Lipschitz constant of the drift $b$. Since the marginal projections and the running supremum are continuous for the M1 topology at continuity
	points, we see that $E^*$ is closed with probability $1$
	under $P_{X,Y}^{1,*}$. Therefore, using the upper semi-continuity of $(\eta,w)\mapsto\mathbf{{1}}_{E^*}(\eta,w)$
	with probability $1$ under $P_{X,Y}^{1,*}$, we conclude that
	
	\[
	P_{X,Y}^{1,*}(E^*)\geq\limsup_{N\rightarrow\infty}P_{X,Y}^{1,N}(E^*)=1.
	\]
Write $(Z^{*},Y^{*})(\eta,w)=(\eta,w)$. Since $(Z^{*},Y^{*})$ belongs to $E^*$ with probability 1 under $P_{X,Y}^{1,*}$, the right-cotinuity of $Z^{*}$ and continuity
	of $Y^*$ gives that, with probability 1, we have $Z^*_{t}-Z^*_{s}\leq Y^*_{t}-Y^*_{s}+C\sup_{r\leq \bar{T}} |Z^*_r|(t-s)$
	for all $s\leq t$ in $[0,\bar{T}]$. Moreover, weak convergence of the marginals implies that, under $P_{X,Y}^{1,*}$, the proccess $Y^*$ has the same law as $Y^{1}$. Using again the weak convergence, we thus deduce that, with respect to the filtration generated by the pair $(Z^{*},Y^{*})$, $Y^{*}$ is a Brownian motion time-changed by $t\mapsto \int_0^t \sigma(s)^2ds$. In particular, the restarted process
	$ Y^*_{\cdot +\tau(Z^*)}-Y^*_{\tau(Z^*)}$
	also has the law of a time-changed Brownian. Applying e.g.~the law of the iterated logarithm to $Y^*_{\cdot +\tau(Z^*)}-Y^*_{\tau(Z^*)}$, we readily deduce from the definition of the M1 topology that
	\[
	\mathbb{{E}}[\mathbf{{P}}^{*}(\{\eta:\hat{\tau}\;\text{continuous at}\;\eta\})]=P_{X}^{1,*}(\{\eta:\hat{\tau}\;\text{continuous at}\;\eta\})=1.
	\]
	Thus, $\hat{\tau}$ is almost everywhere continuous under almost all realisations of $\mathbf{P}^*$.
	
	Now suppose $\mu^{n}\rightarrow\mu$, where $\mu$ is in the support	of $\mathbb{{Q}}^{*}$. By the above, it holds for $\mathbb{{Q}}^{*}$-a.e.~$\mu$ that $\hat{\tau}$ is continuous at $\mu$-almost
	every $\eta$. From here on, we thus restrict to $\mu$ with this property. Using Skorokhod's representation theorem, we can write
	\[
	\mu^{n}(t\geq\hat{{\tau}})=\mathbb{{P}}(t\geq\hat{{\tau}}(Z^{n}))\quad\text{and}\quad\mu(t\geq\hat{{\tau}})=\mathbb{{P}}(t\geq\hat{{\tau}}(Z)),
	\]
	where we have almost sure M1 convergence $Z^{n}\rightarrow Z$ and $\hat{{\tau}}$ is continuous at almost all paths of $Z$. Take $t\in\mathbb{{T}}^{\mu}$. By definition of $\mathbb{{T}}^{\mu}$,  dominated convergence then gives
	\[
	\mathbb{{P}}(t=\hat{{\tau}}(Z))=\mu(t\geq\hat{{\tau}})-\lim_{s\uparrow t}\mu(s\geq\hat{{\tau}})=0,
	\]
	and hence it holds for $\mathbb{{P}}$-almost all $\omega\in\Omega$ that
	$\mathbf{{1}}_{t\geq\hat{{\tau}}(Z^{n}(\omega))}\rightarrow\mathbf{{1}}_{t\geq\hat{{\tau}}(Z(\omega))}$,
	since this convergence can only fail if $t=\hat{{\tau}}(Z(\omega))$. Consequently, another application of dominated convergence yields
	\[
	\mu^{n}(t\geq\hat{{\tau}})= \mathbb{{E}}[\mathbf{{1}}_{t\geq\hat{{\tau}}(Z^{n})}]\rightarrow\mathbb{{E}}[\mathbf{{1}}_{t\geq\hat{{\tau}}(Z)}]= \mu(t\geq\hat{{\tau}}),
	\]
which completes the proof.
\end{proof}
Using the tightness of $\mathbf{P}^{N}$ from Proposition \ref{lem:tightness}, we can fix a limit point $\mathbb{P}^{*}$ of $\mathbb{P}^{N}:=\text{Law}(\mathbf{P}^{N},B^{0})$
on $\mathcal{P}(D_{\mathbb{R}})\times C_{\mathbb{R}}$. From now on, we shall work with this particular limit point (keeping in mind that the arguments apply to any limit point) and, despite having passed to a subsequence, we shall simply write $\mathbb{P}^{N}\Rightarrow\mathbb{P}^{*}$. For concreteness,
we define $\Omega^{*}:=\mathcal{P}(D_{\mathbb{R}})\times C_{\mathbb{R}}$
and introduce the random variables $\mathbf{P}^{*}(\mu,w):=\mu$
and $B^{0}(\mu,w):=w$ on the background space $(\Omega^{*},\mathbb{P}^{*},\mathcal{B}(\Omega^{*}))$.
Note that the joint law of $(\mathbf{P}^{*},B^{0})$ is $\mathbb{P}^{*}$
with $\mathcal{B}(\Omega^{*})=\sigma(\mathbf{P}^{*},B^{0})$.  Given this, we define $L^{*}:=\mathbf{P}^{*}(\, \cdot  \geq\hat{\tau})$ along with the co-countable set of times
\begin{equation}\label{blackboard_t}
\mathbb{T}:=\bigl\{ t\in[0,\bar{T}]:\mathbb{P}^{*}(L_{t}^{*}=L_{t-}^{*})=1,\:\mathbb{E}^{*}[\mathbf{P}^{*}(\hat{\eta}_{t}=\hat{\eta}_{t-})]=1\bigr\}.
\end{equation}

By Proposition \ref{prop: Loss process nice at 0} and the proof of Proposition \ref{lem:tightness}, we knew from the outset that $L^N$ is tight in $D_\mathbb{R}$ on $[0,\bar{T}]$. By Lemma \ref{prop:loss_continuity} and the continuous mapping theorem \cite[Thm.~2.7]{billingsley}, we can deduce that the finite dimensional distributions of its limit (along our fixed subsequence) agree with those of $L^*$, and hence we identify $L^*$ as the limit. Of course, we are actually interested in the dynamics on $[0,T]$, so we note that we also have convergence at the process level in $D_\mathbb{R}$ on $[0,T_0]$, for any $T_0\in[0,T]\cap\mathbb{T}$, since the restriction is continuous so long as $T_0$ is $\mathbb{P}^*$-almost surely a continuity point of $L^*$.

We now proceed to derive some further properties of the limiting law $\mathbb{P}^{*}$, which will ultimately enable us to construct a probability space that supports a solution to (\ref{eq:Relaxed McKean-Vlasov}). First of all, we define the map $\mathcal{M}:\mathcal{P}(D_{\mathbb{R}})\times D_{\mathbb{R}} \rightarrow D_{\mathbb{R}}$ by
\begin{equation}
\label{eq:mathcal_M}
\mathcal{M}(\mu,\eta):=\eta-\eta_{0}-\int_{0}^{\cdot}b(s,\eta_{s})ds-\alpha\mu(\, \cdot \geq\hat{\tau}).
\end{equation}
Fix an arbitrary choice of times $s_{0},t_{0}\in\mathbb{T}\cap[0,T)$ with $s_0<t_0$ and $s_{1},\ldots,s_{k}\in[0,s_{0}]\cap \mathbb{T}$, for $\mathbb{T}$ as defined in (\ref{blackboard_t}), and let $F:D_{\mathbb{R}}\rightarrow\mathbb{R}$ be given by
\begin{equation}\label{eq:F}
F(\zeta):=(\zeta_{t_{0}}-\zeta_{s_{0}})\prod_{i=1}^{k}f_{i}(\zeta_{s_{i}})
\end{equation}
for arbitrary $f_{1},\ldots,f_{k}\in C_{b}(\mathbb{R})$. Based on this, we define the functionals
\[
\begin{cases}
\Psi(\mu):=\bigl\langle\mu,F(\mathcal{M}(\mu, \boldsymbol{\cdot}\hspace{1pt}))\bigr\rangle, \\[3pt] \Upsilon(\mu):=\bigl\langle\mu,F\big(\mathcal{M}(\mu, \boldsymbol{\cdot}\hspace{1pt})^2 - {\textstyle\int_{0}^{\cdot}} \sigma(s)^2ds\big)\bigr\rangle, \;\; \text{and}\\[3pt] \Theta(\mu,w):=\bigl\langle\mu,F\big(\mathcal{M}(\mu, \boldsymbol{\cdot}\hspace{1pt})\times w -{\textstyle\int_{0}^{\cdot}} \sigma(s)\rho(s)ds\big)\bigr\rangle.
\end{cases}
\]
 As an application of Lemma \ref{prop:loss_continuity}, we obtain the following continuity result.
\begin{lem}[Functional continuity]
\label{lem:The-functional-continuity} For $\mathbb{P}^{*}$-almost every~$\mu$, we have $\Psi(\mu^n)$, $\Upsilon(\mu^n)$ and $\Theta(\mu^n,w^n)$ converging to $\Psi(\mu)$, $\Upsilon(\mu)$ and $\Theta(\mu,w)$ whenever $(\mu^{n},w^{n})\rightarrow(\mu,w)$ in $(\mathcal{P}(D_{\mathbb{R}}),\mathfrak{T}_{\emph{M1}}^{wk})\times (C_{\mathbb{R}},\left\Vert \cdot\right\Vert _{\infty})$ along a sequence for which $\langle\mu^{n},|\mathcal{M}_t(\mu^n,\boldsymbol{\cdot}\hspace{1pt})|^p\rangle$ is bounded uniformly in $n\geq1$, for some $p>2$, at any $t\geq0$.
\end{lem}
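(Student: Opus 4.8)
The plan is to reduce the statement to an almost-sure convergence assertion obtained through Skorokhod's representation theorem, and then to upgrade it to convergence of expectations via the moment hypothesis, which plays the role of a uniform-integrability input. I would begin by isolating a good set of limit measures. The functional $F$ in \eqref{eq:F} only probes the finitely many times $s_0,t_0,s_1,\ldots,s_k$, all of which lie in $\mathbb{T}$. By the two defining conditions of $\mathbb{T}$ in \eqref{blackboard_t}, for $\mathbb{P}^*$-a.e.~$\mu$ we have both $\mu(\hat\eta_t=\hat\eta_{t-})=1$ and $t\in\mathbb{T}^\mu$ at each of these finitely many times $t$; intersecting these finitely many full-measure events, it suffices to prove the claim for such $\mu$. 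Fixing one such $\mu$ together with a sequence $(\mu^n,w^n)\to(\mu,w)$ obeying the moment bound, I would invoke that $(D_{\mathbb{R}},\mathrm{M1})$ is Polish and $\mu^n\to\mu$ weakly to produce, via Skorokhod's theorem \cite[Thm.~6.7]{billingsley}, $D_{\mathbb{R}}$-valued $\zeta^n,\zeta$ on a common probability space with laws $\mu^n,\mu$ and $\zeta^n\to\zeta$ almost surely in $\mathrm{M1}$, so that $\langle\mu^n,\phi\rangle=\mathbb{E}[\phi(\zeta^n)]$ and likewise at the limit.

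The core step is then to show, for each fixed $t\in\{s_0,t_0,s_1,\ldots,s_k\}$, the almost-sure convergence $\mathcal{M}_t(\mu^n,\zeta^n)\to\mathcal{M}_t(\mu,\zeta)$, decomposing $\mathcal{M}$ according to \eqref{eq:mathcal_M} into its three contributions. For the coordinate term: since $\mu$ charges only paths continuous at $t$ — this is exactly where $\mu(\hat\eta_t=\hat\eta_{t-})=1$ is used — the $\mathrm{M1}$ convergence gives $\zeta^n_t\to\zeta_t$ almost surely by \cite[Thm.~12.4.1]{whitt}. For the drift term: $\mathrm{M1}$ convergence yields $\zeta^n_s\to\zeta_s$ at every $s$ outside the countable discontinuity set of $\zeta$, and since $\mathrm{M1}$ convergence entails $\sup_n\sup_{s\le\bar T}|\zeta^n_s|<\infty$ almost surely (the supremum functional being continuous on $(D_{\mathbb{R}},\mathrm{M1})$, cf.~\cite{whitt}), the continuity and linear growth of $b$ together with dominated convergence give $\int_0^t b(s,\zeta^n_s)\,ds\to\int_0^t b(s,\zeta_s)\,ds$ almost surely. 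For the loss term: $\mu^n(t\ge\hat\tau)\to\mu(t\ge\hat\tau)$ because $t\in\mathbb{T}^\mu$ and $\mu^n\to\mu$, by Lemma \ref{prop:loss_continuity}. Combining the three, and using $w^n_t\to w_t$, the integrands defining $\Psi(\mu^n)$, $\Upsilon(\mu^n)$ and $\Theta(\mu^n,w^n)$ — each a fixed increment between times $s_0$ and $t_0$ multiplied by the bounded product $\prod_i f_i(\cdot)$ — converge almost surely to their $(\mu,w)$-counterparts.

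It remains to exchange limit and expectation. Here the moment hypothesis enters: $\mathbb{E}[|\mathcal{M}_t(\mu^n,\zeta^n)|^p]=\langle\mu^n,|\mathcal{M}_t(\mu^n,\boldsymbol{\cdot})|^p\rangle$ is bounded uniformly in $n$, applied at $t=s_0$ and $t=t_0$. For $\Psi$, and — since $(w^n_t)_n$ is uniformly bounded — for $\Theta$, the leading increment is bounded in $L^p$ with $p>1$, hence uniformly integrable. For $\Upsilon$, the leading increment involves $\mathcal{M}_{t_0}^2$ and $\mathcal{M}_{s_0}^2$, which are bounded in $L^{p/2}$, and the strict inequality $p>2$ is precisely what makes $p/2>1$ and hence yields uniform integrability. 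Vitali's theorem (a.s.~convergence plus uniform integrability) then delivers $\Psi(\mu^n)\to\Psi(\mu)$, $\Upsilon(\mu^n)\to\Upsilon(\mu)$ and $\Theta(\mu^n,w^n)\to\Theta(\mu,w)$.

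The main obstacle — and the only delicate point — is the interaction with the $\mathrm{M1}$ topology: coordinate projections are continuous only at continuity points of the limiting path, and Lemma \ref{prop:loss_continuity} only yields continuity of $\mu\mapsto\mu(t\ge\hat\tau)$ at times in $\mathbb{T}^\mu$. The argument therefore hinges on having pre-selected $\mu$ so that it places no mass on a jump at any of the finitely many times appearing in $F$ and so that each of those times lies in $\mathbb{T}^\mu$; both are guaranteed exactly by the construction of the set $\mathbb{T}$ in \eqref{blackboard_t}. Everything else is a routine assembly of Skorokhod coupling, dominated convergence for the drift, and Vitali's theorem.
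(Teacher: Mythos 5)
Your proposal is correct and follows essentially the same route as the paper's proof: restrict to the full-measure set of good $\mu$'s determined by $\mathbb{T}$, apply Skorokhod's representation theorem, obtain almost-sure convergence of $\mathcal{M}_t(\mu^n,\cdot)$ at the relevant times via the M1 coordinate-projection and loss-continuity results together with dominated convergence for the drift, and conclude with Vitali's theorem using the moment hypothesis for uniform integrability. Your explicit remarks on why $p>2$ is needed for $\Upsilon$ and on the uniform boundedness of $w^n_t$ for $\Theta$ merely spell out what the paper compresses into ``the proof is analogous.''
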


\begin{proof}
By definition of $\mathbb{T}$, it holds for $\mathbb{P}^{*}$-almost
every $\mu$ that $\mu(t\geq\hat{\tau})=\mu(t-\geq\hat{\tau})$ and
	$\mu(\eta_{t}=\eta_{t-})=1$ for all $t\in\mathbb{T}$. Appealing to Lemma \ref{prop:loss_continuity},
	we can thus restrict to a set of $\mu$'s with probability one under $\mathbb{P}^{*}$ such that $\mu^{n}(t\geq\hat{\tau})$
	converges to $\mu(t\geq\hat{\tau})$ for $t=t_0$ and $t=s_0,\ldots,s_k$. Fix any one of these $\mu$'s and suppose $(\mu^{n},w^{n})\rightarrow(\mu,w)$ in $(\mathcal{P}(D_{\mathbb{R}}),\mathfrak{T}_{\text{\text{M1}}}^{wk})\times (C_{\mathbb{R}},\left\Vert \cdot\right\Vert _{\infty})$ with $\mu^n$ satisfying the above integrability assumption. We start by showing that $\Psi(\mu^n)\rightarrow\Psi(\mu)$. Invoking  Skorokhod's representation theorem \cite[Thm.~6.7]{billingsley}, we can write
	\[
	\Psi(\mu^n)=\mathbb{E}[F(\mathcal{M}(\mu^n,Z^n))] \quad \text{and} \quad \Psi(\mu)=\mathbb{E}[F(\mathcal{M}(\mu,Z))],
	\]
	where $Z^n\rightarrow Z$ almost surely in $(D_\mathbb{R},\text{M1})$ with $\text{Law}(Z^n)=\mu^n$, $\text{Law}(Z)=\mu$, and $\mathbb{P}(Z_t=Z_{t-})$ for $t=t_0,s_0,\ldots,s_k$. 
	By standard properties of M1 convergence \cite[Thm.~12.4.1]{whitt}, we have pointwise convergence at the continuity points of $Z$ and, in particular, $Z^n_s\rightarrow Z_s$ for almost every $s\in[0,T]$. As $Z^n$ is a convergent and hence bounded sequence, we deduce from the continuity of $b(s,\cdot)$ and its linear growth bound that, almost surely,
\begin{equation}
\label{eq:Zn_pointwise}
Z^n_t-Z^n_0-\int_{0}^{t}b(s,Z^n_{s})ds \,\rightarrow\,	Z_t-Z_0-\int_{0}^{t}b(s,Z_{s})ds,\quad \text{for}\;\; t=t_0,s_0,\ldots,s_k,
\end{equation}
by dominated convergence. In turn, we conclude that $F(\mathcal{M}(\mu^n,Z^n))$ converges almost surely to $F(\mathcal{M}(\mu,Z))$ in $\mathbb{R}$. It remains to observe that $F(\mathcal{M}(\mu^n,Z^n))$ is uniformly integrable, since \[
|F(\mathcal{M}(\mu^n,Z^n)|\leq |\mathcal{M}_{t_0}(\mu^n,Z^n)|+|\mathcal{M}_{s_0}(\mu^n,Z^n)|,
\]
and $ \E[|\mathcal{M}_{t}(\mu^n,Z^n)|^p]$ equals $\langle \mu^{n},|\mathcal{M}_t(\mu^n,\boldsymbol{\cdot}\hspace{1pt})|^p\rangle $, which is uniformly bounded for some $p>1$, by assumption. Hence the convergence of $\Psi(\mu^n)$ to $\Psi(\mu^n)$ follows from Vitali's convergence theorem. The proof is analogous for $\Upsilon$ and $\Theta$, using the boundedness assumption for $p>2$.
\end{proof}


Define a new background space $\bar{\Omega} := \Omega^* \times D_{\mathbb{R}} = \mathcal{P}(D_{\mathbb{R}})\times C_{\mathbb{R}}\times D_{\mathbb{R}}$ equipped with its Borel sigma algebra $\mathcal{B}(\bar{\Omega})$ and the probability measure $\bar{\mathbb{P}}$ given by
\begin{equation}
\label{P_bar}
\bar{\mathbb{P}}(A):=\int_{\mathcal{P}(D_{\mathbb{R}})\times C_{\mathbb{R}}}\mu( \{ \eta:(\mu,w,\eta)\in A\})d\mathbb{P}^{*}(\mu,w), \qquad \forall A\in \mathcal{B}(\bar{\Omega}).
\end{equation}

For simplicity of presentation, we shall not distinguish notationally between random variables defined on $\Omega^*$ and their canonical extensions to $\bar{\Omega}$.

\begin{prop}
	\label{prop:martingales}
Let $\mathcal{M}$ be given by (\ref{eq:mathcal_M}). Then $\mathcal{M}_{\cdot}$, $\mathcal{M}_{\cdot}^2-{\textstyle\int_{0}^{\cdot}}\sigma(s)^2ds$ and $\mathcal{M}_{\cdot}\times B^0_{\cdot} -{\textstyle\int_{0}^{\cdot}}\sigma(s)\rho(s)ds$ are all continuous martingales on $(\bar{\Omega}, \bar{\mathbb{P}}, \mathcal{B}(\bar{\Omega}))$ under Assumption \ref{Assumptions}.	\end{prop}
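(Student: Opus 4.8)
The plan is to show that each of the three processes is (i) adapted, (ii) integrable, and (iii) satisfies the defining martingale increment property, and to extract all of this from the convergence $\mathbb{P}^N \Rightarrow \mathbb{P}^*$ together with the functional continuity provided by Lemma \ref{lem:The-functional-continuity}. The starting observation is that on the prelimit level the processes have a clear meaning: writing $X^{1,N}$ for a tagged particle, the process $\mathcal{M}(\mathbf{P}^N, X^{1,N}) = X^{1,N}_\cdot - X^{1,N}_0 - \int_0^\cdot b(s,X^{1,N}_s)\,ds - \alpha L^N_\cdot$ is exactly the martingale part $\sigma(\cdot)\bigl(\sqrt{1-\rho(\cdot)^2}\,dB^1 + \rho(\cdot)\,dB^0\bigr)$ integrated up, which is a continuous martingale with bracket $\int_0^\cdot \sigma(s)^2\,ds$ and cross-bracket $\int_0^\cdot \sigma(s)\rho(s)\,ds$ against $B^0$. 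Hence, for the functionals $F$ of the form \eqref{eq:F} built from times in $\mathbb{T}$ and bounded continuous $f_i$, we have $\mathbb{E}^N[\Psi(\mathbf{P}^N)] = 0$, $\mathbb{E}^N[\Upsilon(\mathbf{P}^N)] = 0$, and $\mathbb{E}^N[\Theta(\mathbf{P}^N, B^0)] = 0$, where $\mathbb{E}^N$ denotes expectation under $\mathbb{P}^N = \text{Law}(\mathbf{P}^N, B^0)$. These identities hold because, conditionally on nothing, $\langle \mathbf{P}^N, F(\mathcal{M}(\mathbf{P}^N, \cdot))\rangle$ is the average over $i$ of $\mathbb{E}[F(\mathcal{M}(\mathbf{P}^N, X^{i,N}))]$, and each term vanishes by the optional stopping / martingale property applied at the fixed times $s_1 \le \dots \le s_k \le s_0 < t_0$ (using that $F$ multiplies the increment $\zeta_{t_0} - \zeta_{s_0}$ by an $\mathcal{F}_{s_0}$-measurable bounded factor), and similarly for the quadratic and cross terms.

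The next step is to pass to the limit. By Lemma \ref{lem:The-functional-continuity}, the functionals $\Psi, \Upsilon, \Theta$ are continuous at $\mathbb{P}^*$-a.e.\ $(\mu, w)$ along sequences with a uniform $L^p$ bound on $\mathcal{M}_t(\mu^n, \cdot)$, $p > 2$; the required uniform moment bound is supplied by Assumption \ref{Assumptions} (the eighth-moment bound on $\sup_{t \le \bar T}|X^{1,N}_t|$, uniform in $N$, combined with the linear growth of $b$, controls $\langle \mathbf{P}^N, |\mathcal{M}_t(\mathbf{P}^N, \cdot)|^8\rangle$ uniformly). Combining the continuous mapping theorem with uniform integrability (again from the eighth-moment bound, which dominates the at-most-quadratic growth of $\Psi, \Upsilon, \Theta$ in the path), we upgrade $\mathbb{E}^N[\Psi(\mathbf{P}^N)] = 0$ to $\mathbb{E}^*[\Psi(\mathbf{P}^*)] = 0$, and likewise $\mathbb{E}^*[\Upsilon(\mathbf{P}^*)] = 0$ and $\mathbb{E}^*[\Theta(\mathbf{P}^*, B^0)] = 0$. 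Unwinding the definition \eqref{P_bar} of $\bar{\mathbb{P}}$, the identity $\mathbb{E}^*[\langle \mathbf{P}^*, F(\mathcal{M}(\mathbf{P}^*, \cdot))\rangle] = 0$ is exactly $\bar{\mathbb{E}}[F(\mathcal{M}_\cdot)] = 0$, i.e.\ $\bar{\mathbb{E}}\bigl[(\mathcal{M}_{t_0} - \mathcal{M}_{s_0})\prod_{i=1}^k f_i(\eta_{s_i})\bigr] = 0$ for all such choices; since the times $s_i, s_0, t_0$ range over the co-countable dense set $\mathbb{T}$ and the $f_i$ over $C_b(\mathbb{R})$, this says precisely that $\mathcal{M}_\cdot$ is a martingale with respect to the filtration generated by the canonical process $\eta$ (and $B^0$), first over times in $\mathbb{T}$ and then, by right-continuity of $\mathcal{M}$ and dominated convergence, over all times. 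The analogous computations with $\Upsilon$ and $\Theta$ give that $\mathcal{M}^2_\cdot - \int_0^\cdot \sigma(s)^2\,ds$ and $\mathcal{M}_\cdot B^0_\cdot - \int_0^\cdot \sigma(s)\rho(s)\,ds$ are martingales.

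Finally, continuity of the three processes needs to be checked on the limit space. $\mathcal{M}_\cdot = \eta_\cdot - \eta_0 - \int_0^\cdot b(s,\eta_s)\,ds - \alpha L^*_\cdot$ has a possible jump only where $\eta$ or $L^*$ jumps; but $\bar{\mathbb{P}}$-a.s.\ the path $\eta$ has no jump at any time in $\mathbb{T}$, and at times outside $\mathbb{T}$ a short argument (the jumps of $\eta$ under $\mathbf{P}^*$ are downward, of size at least $\alpha \Delta L^*$, by the no-crossing structure inherited from the particle system, matched exactly by the jump of $-\alpha L^*$) shows the two jumps cancel, so $\mathcal{M}$ is continuous; alternatively one notes that a martingale that is a limit in the $M1$ topology of continuous martingales with uniformly controlled brackets is continuous. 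I expect the main obstacle to be precisely this continuity/identification of the jumps of $\mathcal{M}$ — reconciling the $M1$-limit with the pathwise cancellation $\Delta \eta_t = -\alpha \Delta L^*_t$ — together with making rigorous the uniform-integrability step needed to pass $\mathbb{E}^N[\,\cdot\,] = 0$ to the limit; the martingale increment identities themselves are then a formality given Lemma \ref{lem:The-functional-continuity}.
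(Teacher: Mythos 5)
Your derivation of the martingale identities follows the paper's route almost exactly: verify $\mathbb{E}[\Psi(\mathbf{P}^N)]=\mathbb{E}[\Upsilon(\mathbf{P}^N)]=\mathbb{E}[\Theta(\mathbf{P}^N,B^0)]=0$ from the fact that $\mathcal{M}(\mathbf{P}^N,X^{i,N})$ is a genuine stochastic integral, pass to the limit via Lemma \ref{lem:The-functional-continuity} plus uniform integrability, and unwind \eqref{P_bar} to read off $\bar{\mathbb{E}}[F(\mathcal{M})]=0$. One technical point you gloss over: Lemma \ref{lem:The-functional-continuity} requires the bound on $\langle\mu^n,|\mathcal{M}_t(\mu^n,\boldsymbol{\cdot})|^p\rangle$ to hold \emph{pathwise} along the almost surely convergent (Skorokhod-represented) sequence, not merely a uniform moment bound in expectation. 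The paper handles this by noting that $\langle\mathbf{P}^N,|\mathcal{M}_t(\mathbf{P}^N,\boldsymbol{\cdot})|^p\rangle$ is an explicit empirical average of Gaussian stochastic integrals converging a.s.\ to a functional of $B^0$, and then extracting a further subsequence on which the quantity is a.s.\ bounded in $N$; your appeal to the eighth-moment bound of Assumption \ref{Assumptions} gives boundedness in expectation, from which a similar subsequence extraction is still needed. This is fixable but should be said.

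The genuine gap is the continuity step, which you yourself flag as the main obstacle. Neither of your two sketches works as stated. The jump-cancellation argument ($\Delta\eta_t$ matching $\alpha\Delta L^*_t$) presupposes structural information about the limit point that is only established \emph{after} this proposition, via L\'evy's characterisation in the proof of Theorem \ref{Thm:Existence}; invoking it here is circular. The assertion that an M1-limit of continuous martingales with controlled brackets is continuous is not a theorem in the form you state it --- the whole purpose of the M1 topology in this paper is that M1-limits of continuous processes can have jumps, and the martingale property alone does not preclude them. The paper's actual argument is short and avoids both issues: it estimates
\[
\bar{\mathbb{E}}\bigl[|\mathcal{M}_t-\mathcal{M}_s|^4\bigr]\;\leq\;\limsup_{N\to\infty}\mathbb{E}\bigl[\langle\mathbf{P}^N,|\mathcal{M}_t-\mathcal{M}_s|^4\rangle\bigr]\;=\;O(|t-s|^2),
\]
using Burkholder--Davis--Gundy on the prelimit stochastic integrals, and then applies Kolmogorov's continuity criterion to conclude that $\mathcal{M}$ has a continuous version. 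You should replace your continuity discussion with this moment estimate.
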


\begin{proof}  Recall that $\mathbb{P}^N\Rightarrow\mathbb{P}^*$, where $\mathbb{P}^N$ is the law of $(\mathbf{P}^N,B^0)$. Using Skorokhod's representation theorem, we can find an almost surely convergent sequence $(\mathbf{Q}^N,B^N)\rightarrow(\mathbf{Q}^*,B^*)$ in $(\mathcal{P}(D_\mathbb{R}),\mathfrak{T}_{\text{\emph{M1}}}^{wk})\times (C_\mathbb{R},\Vert\cdot\Vert_{\infty})$ with \[
	\mathbb{P}^N=\text{Law}(\mathbf{Q}^N,B^N) \quad \text{and} \quad \mathbb{P}^*=\text{Law}(\mathbf{Q}^*,B^*).
	\]
	Notice that we simply have
		\[
		\langle \mathbf{P}^N,|\mathcal{M}_t(\mathbf{P}^N,\boldsymbol{\cdot}\hspace{1pt})|^p\rangle
		=\frac{1}{N}\sum_{i=1}^N \Bigl|  \int_0^t \sigma(s)\rho(s)dB_s^0 + \int_0^t \sigma(s)\sqrt{1-\rho(s)^2}dB_s^i   \Bigr|^p,
		\]
		for any given power $p>0$. This converges with probability one to a functional $G$ of the given $B^0$, where $G(\cdot)$ is bounded in terms of its argument, e.g.~by the Birkhoff--Khinchin ergodic theorem. Since the laws of $(\mathbf{P}^N,B^0)$ and $(\mathbf{Q}^N,B^N)$ agree, for each $N\geq1$, we deduce that $\langle \mathbf{Q}^N,|\mathcal{M}_t(\mathbf{Q}^N,\boldsymbol{\cdot}\hspace{1pt})|^p\rangle - G(B^N)$ converges to zero in probability, and using also that $B^N\rightarrow B^*$ almost surely, with $G(B^N)$ bounded in terms of $B^N$, we can thus find a subsequence for which $\langle \mathbf{Q}^N,|\mathcal{M}_t(\mathbf{Q}^N,\boldsymbol{\cdot}\hspace{1pt})|^p\rangle$ is bounded in $N\geq1$ with probability one (the bound being random). After passing to this subsequence, $(\mathbf{Q}^N,B^N)$ satisfies the assumptions of Lemma \ref{lem:The-functional-continuity} with probability one, so the functional continuity in Lemma \ref{lem:The-functional-continuity} gives that $\Psi(\mathbf{P}^N)$ converges in law to $\Psi(\mathbf{P}^*)$ along a subsequence (still indexed by $N$). Analogous arguments apply to $\Upsilon(\mathbf{P}^N)$ and $\Theta(\mathbf{P}^N,B^0)$. Next, we can observe that, for all $N\geq1$,
\[
\mathbb{E}\bigl[\Psi(\mathbf{P}^{N})\bigr]=\mathbb{E}\Bigl[F \Bigl( \int_{0}^{\cdot}\sigma(s)\rho(s)dB_{s}^{0}+ \int_{0}^{\cdot}\sigma(s)\sqrt{1-\rho(s)^{2}}dB_{s}^{1}\Bigr)\Bigr]=0.
\]
Furthermore, $\E[|\Psi(\textbf{P}^N)|^p]$ is clearly uniformly bounded in $N\geq1$, for any given $p>1$, so we have uniform integrability. Since $\Psi(\mathbf{P}^N)$ converges weakly to $\Psi(\mathbf{P}^*)$, as we argued above, the uniform integrability gives convergence of the means  \cite[Thm.~3.5]{billingsley} and so it holds by construction of $\bar{\mathbb{P}}$ that
\[
	\bar{\mathbb{E}}[F(\mathcal{M})]=\mathbb{E}^*[\Psi(\mathbf{P}^*)]=\lim_{N\rightarrow\infty}\mathbb{E}\bigl[\Psi(\mathbf{P}^{N})\bigr]=0.
\]
By definition of $F$ we can thus deduce that $\mathcal{M}$ is indeed a martingale under $\bar{\P}$. For the pathwise continuity of $t \mapsto \mathcal{M}_t$, note that
\begin{align*}
	\bar{\mathbb{E}}[|\mathcal{M}_t - \mathcal{M}_s|^{4}]
		 \leq \limsup_{N \to \infty} \mathbb{E}[ \langle \mathbf{P}^N, |\mathcal{M}_t - \mathcal{M}_s|^{4} \rangle]  = \mathbb{E}\Big[ \Big| \int^t_s \sigma(r) dW_r  \Big|^4 \Big] = O(|t-s|^2),
\end{align*}
	where $W$ is a standard Brownian motion and the last equality follows from Burkholder--Davis--Gundy. By Kolmogorov's continuity criterion \cite[Chp.~3, Thm.~8.8]{ethier_kurtz_1986}, we conclude that $\mathcal{M}$ has a continuous version. The proof is similar for the two other processes, using the convergence in law (along a subsequence) of $\Upsilon(\mathbf{P}^N)$ to $\Upsilon(\mathbf{P}^*)$ and of $\Theta(\mathbf{P}^N,B^0)$ to $\Theta(\mathbf{P}^*,B^0)$, respectively.
\end{proof}

Based on the previous proposition, we can now finalise the proof of Theorem \ref{Thm:Existence}.
\begin{proof}[Proof of Theorem \ref{Thm:Existence}]
Fix the probability space $(\bar{\Omega}, \bar{\mathbb{P}}, \mathcal{B}(\bar{\Omega}))$ as introduced in (\ref{P_bar})
	and recall that this includes fixing a limit point $(\mathbf{P}^{*},B^{0})$ of $(\mathbf{P}^{N},B^{0})$. Now define a c\`adl\`ag process $X$ on $\bar{\Omega}$  by $X(\mu,w,\eta):=\eta$.
Then it holds by construction of $\bar{\mathbb{P}}$ that
\[
	\bar{\mathbb{P}}\bigl(X\in A,\,(\mathbf{P}^{*},B^{0})\in S\bigr) =\int_{S}\mu(A)d\mathbb{P}^{*}(\mu,w),
\]
where we recall that $\mathbb{P}^{*}$ is the joint law of $(\mathbf{P}^{*},B^{0})$. Consequently, we have
\[
	\bar{\mathbb{P}}(X \in A \,| \, \mathbf{P}^*, B^0) = \mathbf{P}^*(A),\qquad \forall A\in \mathcal{B}(D_{\mathbb{R}}),
\]
as desired. Next, Proposition \ref{prop:martingales} gives that 
\[
	\mathcal{M}_t = X_t - X_0 - \int^t_0 b(s,X_s)ds - \alpha \mathbf{P}^*(\hat{\tau} \leq t)	
\]
is a continuous martingale with
\[
\langle \mathcal{M} , \mathcal{M} \rangle_t = \int^t_0 \sigma(s)^2ds 
	\qquad \text{and}\qquad
	\langle \mathcal{M}, B^0 \rangle_t = \int^t_0 \sigma(s) \rho(s) ds.
\]
By L\'evy's characterisation theorem, we deduce that there exists a Brownian motion, $B$, that is independent of $B^0$ and for which
\[
	\mathcal{M}_t = \int^t_0 \sigma(s) \rho(s) dB^0_s + \int^t_0 \sigma(s) \sqrt{1 - \rho(s)^2}dB_s.
\]
By the law of large numbers and continuity of the projection $\eta \mapsto \eta_0$, one readily verifies that  $X_0$ is distributed according to $\nu_0$ under $\bar{\mathbb{P}}$. However, we need Lemma \ref{lem:independence} below to ensure that $(B,B^0,\mathbf{P}) \perp B$ and $X_0\perp (B,B^0,\mathbf{P})$. By virtue of this lemma, we conclude that  $(X, \mathbf{P}^*, B^0, B)$ is a solution to \eqref{eq:Relaxed McKean-Vlasov}.

Finally, by passing to a limit as $N \to \infty$ in Lemma \ref{Minimal_jumps_particle} and then sending $\delta \downarrow 0$, we obtain ($\bar{\mathbb{P}}$-a.s.)
\begin{equation}
\label{eq:Existence_UpperBoundForPhysicalCondition}
	\Delta \hat{L}_t \leq \inf\{ x > 0 : \nu_{t-}^*( \hspace{0.4pt} [0,\alpha x]\hspace{0.4pt} ) < x \}.
\end{equation}
	Since $\hat{L}$ is c\`adl\`ag by construction, Proposition \ref{Existence_Prop_MinimalityForCommonNoise} ensures that we have equality in (\ref{eq:Existence_UpperBoundForPhysicalCondition}) and thus the existence proof is complete.
\end{proof}

It remains to verify that the limit point $\mathbf{P}^*$ of $\mathbf{P}^N$ really does become independent of the idiosyncratic Brownian motion in the large population limit.
\begin{lem}
	\label{lem:independence} In the proof of Theorem \ref{Thm:Existence}, we have mutual independence of $X_0$, $ B$, and $( B^0,\mathbf{P}^*)$ under $\bar{\mathbb{P}}$.
\end{lem}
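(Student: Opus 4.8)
The plan is to reduce the claimed mutual independence of $X_0$, $B$ and $(B^0,\mathbf{P}^{*})$ to two statements: (I) $B$ is independent of $\sigma(X_0,B^0,\mathbf{P}^{*})$, and (II) $X_0$ is independent of $\sigma(B^0,\mathbf{P}^{*})$. Indeed, three $\sigma$-algebras are mutually independent as soon as one of them is independent of the join of the other two and those two are independent, so (I) and (II) together give the lemma. Part (II) is easy: by construction of $\bar{\mathbb{P}}$ in (\ref{P_bar}) we have $\bar{\mathbb{P}}(X_0\in\cdot\mid\mathbf{P}^{*},B^{0})=\mathbf{P}^{*}(\hat{\eta}_{0}\in\cdot)$. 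Now $\mathbf{P}^{N}(\hat{\eta}_{0}\in\cdot)=\tfrac{1}{N}\sum_{i=1}^{N}\delta_{X_{0}^{i,N}}$ converges almost surely to $\nu_0$ by the law of large numbers, while the evaluation $\mu\mapsto\mu(\hat{\eta}_{0}\in\cdot)$ is continuous for the weak topologies (the projection $\eta\mapsto\eta_{0}$ on $D_{\mathbb{R}}$ being continuous at the fixed time $0$, cf.~\cite[Thm.~12.4.1]{whitt}), so the weak convergence $\mathbf{P}^{N}\Rightarrow\mathbf{P}^{*}$ forces $\mathbf{P}^{*}(\hat{\eta}_{0}\in\cdot)=\nu_{0}$ $\bar{\mathbb{P}}$-almost surely. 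Hence the conditional law of $X_0$ given $\sigma(B^0,\mathbf{P}^{*})$ is the non-random measure $\nu_0$, which is exactly (II).

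For part (I) the idea is to verify that $B$ is a Brownian motion with respect to the enlarged filtration $\mathbb{G}_{t}:=\sigma(X_0,B^{0},\mathbf{P}^{*})\vee\sigma(B_{s}:s\le t)$; since a Brownian motion is independent of the initial $\sigma$-field of any filtration it is a Brownian motion for, this yields $B\perp\mathbb{G}_{0}=\sigma(X_0,B^{0},\mathbf{P}^{*})$, i.e.~(I). As $B$ is already known (from the proof of Theorem \ref{Thm:Existence}) to be a continuous process with $[B]_{t}=t$, Lévy's characterisation reduces the task to checking that $B$ is a $(\mathbb{G}_{t})$-martingale, and by a functional monotone-class argument it suffices to verify
\[
\bar{\mathbb{E}}\bigl[(B_{t_{0}}-B_{s_{0}})\,h(X_0)\,\Phi(\mathbf{P}^{*})\,\Psi(B^{0})\,G(B_{u_{1}},\dots,B_{u_{m}})\bigr]=0
\]
for all $0\le u_{1}<\dots<u_{m}\le s_{0}<t_{0}$ and all bounded continuous $h,\Phi,\Psi,G$. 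Recalling from the proof of Theorem \ref{Thm:Existence} that $B_{t}=\int_{0}^{t}(\sigma(s)\sqrt{1-\rho(s)^{2}})^{-1}\bigl(d\mathcal{M}_{s}-\sigma(s)\rho(s)\,dB^{0}_{s}\bigr)$ with $\mathcal{M}=\mathcal{M}(\mathbf{P}^{*},X)$ as in (\ref{eq:mathcal_M}), one approximates this stochastic integral by Riemann sums — each a genuine continuous functional of $(\mathcal{M},B^{0})$, hence, via $\bar{\mathbb{P}}$, of $(\mathbf{P}^{*},X,B^{0})$ — and passes to the limit along the weakly convergent subsequence $(\mathbf{P}^{N},B^{0})\Rightarrow(\mathbf{P}^{*},B^{0})$, using the moment bounds of Assumption \ref{Assumptions} for the necessary uniform integrability (as in Lemma \ref{lem:The-functional-continuity}). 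This transports the displayed identity to the particle level, where, since $\mathcal{M}(\mathbf{P}^{N},X^{i,N})=\int_{0}^{\cdot}\sigma\rho\,dB^{0}+\int_{0}^{\cdot}\sigma\sqrt{1-\rho^{2}}\,dB^{i}$, the increment $B_{t_{0}}-B_{s_{0}}$ is replaced by $\tfrac{1}{N}\sum_{i}(B^{i}_{t_{0}}-B^{i}_{s_{0}})h(X_{0}^{i,N})G(B^{i}_{u_{1}},\dots,B^{i}_{u_{m}})$, averaged against $\mathbf{P}^{N}$. Each increment of $B^{i}$ over $[s_{0},t_{0}]$ is centred and independent of $X_{0}^{i,N}$, of $B^{0}$, of the other idiosyncratic noises, and of $B^{i}|_{[0,s_{0}]}$, so the only surviving coupling is through $\Phi(\mathbf{P}^{N})$, and the associated covariance vanishes because $\mathbf{P}^{N}$ depends only weakly — of order $1/N$ — on the trajectory of any single $B^{i}$.

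The main obstacle is precisely this last decoupling estimate. One must show that perturbing one idiosyncratic Brownian motion $B^{i}$ moves $\mathbf{P}^{N}$ by $O(1/N)$, uniformly in $N$, even near cascades where a priori a single perturbation could propagate through the feedback term $\alpha L^{N}$; this is where the monotonicity of $L^{N}$ and the stability of Skorokhod's $\mathrm{M1}$ topology are used, in the same spirit as the corresponding step in \cite{DIRT_SPA}, and where the common Brownian motion $B^{0}$ — being shared by all particles — must be carefully separated from the idiosyncratic ones, just as in Proposition \ref{prop: Loss process nice at 0}. The remaining ingredients — the monotone-class reduction, the Riemann-sum approximation of the stochastic integral, and the bookkeeping needed to carry the factor $\Phi(\mathbf{P}^{N})$ through the limit — are routine given the continuity results of Lemmas \ref{prop:loss_continuity} and \ref{lem:The-functional-continuity}.
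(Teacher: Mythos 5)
Your reduction to (I) $B\perp\sigma(X_0,B^0,\mathbf{P}^*)$ and (II) $X_0\perp\sigma(B^0,\mathbf{P}^*)$ is sound, and your argument for (II) (the conditional law of $X_0$ given $(\mathbf{P}^*,B^0)$ is the deterministic measure $\nu_0$, by the law of large numbers and continuity of $\mu\mapsto\mu(\hat\eta_0\in\cdot)$) matches what the paper does. The problem is (I). Your plan requires showing that
\[
\mathbb{E}\Bigl[\tfrac{1}{N}\sum_{i}(B^i_{t_0}-B^i_{s_0})\,h(X_0^{i,N})\,G(B^i_{u_1},\dots)\cdot\Phi(\mathbf{P}^N)\,\Psi(B^0)\Bigr]\longrightarrow 0,
\]
and you correctly identify that the only obstruction is the correlation between each increment $B^i_{t_0}-B^i_{s_0}$ and the global functional $\Phi(\mathbf{P}^N)$. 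But the decoupling estimate you invoke to kill this correlation --- that perturbing a single $B^i$ moves $\mathbf{P}^N$ by $O(1/N)$, uniformly near cascades --- is not available in this paper and is not ``routine'': it is essentially a quantitative stability/propagation-of-chaos statement for the singular interaction, and the paper explicitly warns (after Theorem \ref{Thm:Existence}) that it does \emph{not} obtain propagation of chaos in the absence of uniqueness. Near a cascade threshold a single particle's trajectory can trigger or suppress an $O(1)$ jump of $L^N$, so the uniform $O(1/N)$ sensitivity claim is precisely the kind of statement the singular feedback obstructs. This is a genuine gap.

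The paper's proof is engineered to avoid any such decoupling. Instead of testing the increment of one (or an average of) $B^i$ against $\Phi(\mathbf{P}^N)$, it considers the \emph{square} of the conditional expectation, i.e.\ the functional $\Lambda(\mu,w)=\langle\mu,\tilde F(\mathcal{M}(\mu,\boldsymbol{\cdot})-\int_0^\cdot\sigma\rho\,dw,\boldsymbol{\cdot})\rangle^2$ with $\tilde F(\zeta,\eta)=F(\zeta)g(\eta_0)$. At the particle level this is $\bigl(\frac{1}{N}\sum_i\tilde F_i\bigr)^2=\frac{1}{N^2}\sum_{i,j}\tilde F_i\tilde F_j$, where $\tilde F_i$ depends only on $(B^i,X_0^{i,N})$; the cross terms $i\neq j$ vanish \emph{exactly} under the unconditional expectation by independence of the driving noises, and the diagonal contributes $O(1/N)$. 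No conditioning on, or stability of, $\mathbf{P}^N$ is ever needed. Passing to the limit (via the Young-integral continuity of $w\mapsto\int\sigma\rho\,dw$ and a variant of Lemma \ref{lem:The-functional-continuity}) gives $\mathbb{E}^*[\Lambda(\mathbf{P}^*,B^0)]=0$, i.e.\ $Y=\mathcal{M}-\int_0^\cdot\sigma\rho\,dB^0$ is, for a.e.\ $\omega$, a martingale under $\mathbf{P}^*(\omega)$ conditionally on $\eta_0$; a second functional $\Gamma$ identifies its quadratic variation, and L\'evy's characterisation then yields the product formula for $(X_0,Y,(\mathbf{P}^*,B^0))$, from which the independence of $B$ follows by taking Riemann sums of $\int(\sigma\sqrt{1-\rho^2})^{-1}dY$. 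If you want to salvage your route, you should replace the decoupling step by this squaring device; as written, the proposal does not close.
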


\begin{proof}
By Assumption \ref{Assumptions}, $t\mapsto \sigma(t) \rho(t)$ is in $\mathcal{C}^{\kappa}(0,T)$ for some $\kappa>1/2$. Hence the map
	\[
	w \mapsto \int_{0}^{\cdot}\sigma(s)\rho(s)dw_s
	\]
	makes sense as a Young integral for any Brownian path $w$ \cite[Thm.~6.8]{Friz_Victoir} and it agrees almost surely with the corresponding It\^o integral against $B^0$ under $\mathbb{P}^*$. Moreover, if a sequence $(w^n)$ in $\cap_{\kappa<1/2} \hspace{2pt}\mathcal{C}^{\kappa}(0,T)$ converges uniformly to $w\in\cap_{\kappa<1/2}\hspace{2pt}\mathcal{C}^{\kappa}(0,T)$, then the pathwise integrals also converge \cite[Prop.~6.11-6.12]{Friz_Victoir}. 
	
	Given an arbitrary bounded continuous function $g\in \mathcal{C}_b(\mathbb{R})$, let
	\[\tilde{F}(\zeta , \eta ):=F(\zeta)g(\eta_0), \qquad \text{for all } (\zeta,\eta)\in C_\mathbb{R}\times D_\mathbb{R},
	\]
	where $F$ is defined as in \eqref{eq:F}.Using the above observations on Young integrals, by analogy with Lemma \ref{lem:The-functional-continuity} we then get $\Lambda(\mu^n,w^n)\rightarrow\Lambda(\mu,w)$ where
	\[
	\Lambda(\mu,w):=\Bigl\langle \mu, \tilde{F}\Bigl( \mathcal{M}(\mu,\boldsymbol{\cdot} \,)-\int_{0}^{\cdot}\!\sigma(s)\rho(s)dw_s) , \boldsymbol{\cdot} \Bigr)  \Bigr \rangle^{\!2},
	\]
	whenever $(\mu^n,w^n)\rightarrow (\mu,w)$, for $\mathbb{P}^*$-almost every $(\mu,w)$, provided $\mu^n$ satisfies the integrability assumption of Lemma \ref{lem:The-functional-continuity} and provided $w^n$ is a sequence of Brownian paths.
	By the same reasoning as in the proof of Proposition \ref{prop:martingales}, a Skorokhod representation argument gives that it suffices to only have continuity of $\Lambda$ along such sequences, in order for us conclude that $\Lambda(\mathbf{P}^N,B^0)$ converges in law to $\Lambda(\mathbf{P}^*,B^0)$.

	Next, we define the process
	\[
	Y_t:=\mathcal{M}_t-\int_{0}^{t}\sigma(s)\rho(s)dB^0_s=\int^t_0 \sigma(s) \sqrt{1 - \rho(s)^2}dB_s.
	\]
	By definition, we then have
	
	\[
	\int_{\Omega^{*}}\bigl\langle \mathbf{P}^{*}(\omega),\tilde{F}\bigl(Y(\omega,\boldsymbol{\cdot}\,),\boldsymbol{\cdot}\, \bigr)\bigr\rangle^{2}d\mathbb{P}^{*}(\omega) = \mathbb{E}^*[\Lambda(\textbf{P}^{*},B^0)].
	\]
It is immediate that $\Lambda(\mathbf{P}^N,B^0)$ is uniformly integrable, so the convergence in law of $\Lambda(\mathbf{P}^N,B^0)$ to $\Lambda(\mathbf{P}^*,B^0)$, as established above, yields convergence of the means. Hence we get
	\begin{equation}\label{eq:idio_BM}
	\int_{\Omega^{*}}\bigl\langle \mathbf{P}^{*}(\omega),F\bigl(Y(\omega,\boldsymbol{\cdot}\,), \boldsymbol{\cdot}\,\bigr)\bigr\rangle^{2}d\mathbb{P}^{*}(\omega)=\lim_{N\rightarrow\infty}\mathbb{E}\bigl[\Lambda(\textbf{P}^N,B^0)\bigr]=0,
	\end{equation}
	where we have used that
	\[
	\mathbb{E}[\Lambda(\mathbf{P}^{N},B^0)] \leq \frac{C}{N^{2}}\sum_{i=1}^{N}\mathbb{E}\biggl[\Bigl(\int_{s_{0}}^{t_{0}}\sigma(s)\sqrt{1-\rho(s)^2}dB_{s}^{i}\Bigr)^{\!2}\biggr]\rightarrow 0,
	\]
	as $N\rightarrow0$, by the properties of the $B^i$'s and the definition of $\Lambda$. Indeed, since the $B^i$'s are independent Brownian motions, and since they are independent of the initial conditions, the definition of $\tilde{F}$ and the tower law yield
	\begin{equation*}
 \mathbb{E}\biggl[ \tilde{F} \Bigl(\int_0^{\cdot}\sigma(s)\sqrt{1-\rho(s)^2}dB_{s}^{i}, X_0^i\Bigr)  \tilde{F} \Bigl(\int_0^{\cdot}\sigma(s)\sqrt{1-\rho(s)^2}dB_{s}^{j}, X_0^j\Bigr) \biggr]=0,
	\end{equation*}
	for all $i\neq j$, which kills the cross terms when writing out the expression for $\mathbb{E}[\Lambda(\mathbf{P}^{N},B^0)]$.
	
	From \eqref{eq:idio_BM} and the definition of $\tilde{F}$, we can conclude that, for $\P^*$-a.e.~$\omega \in \Omega^{*}$, the process $\eta \mapsto Y(\omega,\eta)$ is a martingale under $\textbf{P}^*(\omega)$ conditional on the random variable  $\eta\mapsto \eta_0$.
	
	A similar argument, this time for
	\[
	\Gamma(\mu,w):=\Bigl \langle \mu, \tilde{F}\Bigl( \Bigl(  \mathcal{M}(\mu,\boldsymbol{\cdot} \,)-\int_{0}^{\cdot}\!\sigma(s)\rho(s)dw_s \Bigr)^{\!2} - \int_0^\cdot \! \sigma(s)^2(1-\rho(s)^2)ds, \boldsymbol{\cdot} \Bigr) \Bigr \rangle^{\!2},
	\]
	shows that the quadratic variation of $\eta \mapsto Y(\omega,\eta)$ is $\int_{0}^{\cdot}\sigma(s)^2(1-\rho(s)^2)ds$ conditionally on $\eta \mapsto \eta_0$. Therefore, as in the proof of Theorem \ref{Thm:Existence}, Levy's characterisation theorem allows us to deduce that, for $\P^*$-a.e. $\omega \in \Omega^{*}$, the process  $\eta \mapsto Y(\omega,\eta)$ is a time-changed Brownian motion under $\textbf{P}^*(\omega)$ conditional on $\eta \mapsto \eta_0$. Furthermore, by the strong law of large numbers, one easily sees that $\eta \mapsto \eta_0$ is distributed according to $\nu_0$ under $\textbf{P}^*(\omega)$, and so is $X_0$ under $\bar{\mathbb{P}}$. In particular, we can thus deduce that
	\begin{align*}
	\bar{\mathbb{P}}\bigl( X_0 \in I , \, Y \in A, \, (\mathbf{P}^{*},B^{0})\in S \bigr)  &=\int_{S}(\mathbf{P}^{*}(\omega)\hspace{-0.8pt})(\hspace{0.6pt}\{\eta:Y(\omega,\eta)\in A , \eta_0\in I\}\hspace{0.2pt})d\mathbb{P}^{*}(\omega)\\
	& =\int_{S}\bar{\mathbb{P}}(Y\in A)\bar{\mathbb{P}}(X_0 \in I)d\mathbb{P}^{*}(\omega) \\
	&=\bar{\mathbb{P}}(X_0 \in I) \bar{\mathbb{P}}(Y\in A) \bar{\mathbb{P}}\bigl(\hspace{-0.5pt}(\mathbf{P}^{*},B^{0})\in S \bigr).
	\end{align*}
	Finally, we can observe that $B=(B_t)_{t\in[0,T]}$ is given as the uniform limit in probability (under $\bar{\mathbb{P}}$) of Riemann sums of $\int_{0}^{\cdot} (\sigma(s) \sqrt{1-\rho(s)^2})^{-1} d Y_s$. By the above result, we have that $X_0$, $(\mathbf{P}^{*},B^{0})$, and any one of these finite sums are mutually independent  under $\bar{\mathbb{P}}$, so the same is true for $B$ in the limit. This finishes the proof.
\end{proof}

\section{Filtrations and a remark on numerics}\label{subsec:filtration} 

This final section is concerned mainly with the construction of the filtrations discussed at the start of Section 3. First, we prove Proposition \ref{filtration} concerning the overall filtration  $\mathcal{F}_t$, and we then derive verify the properties of the subfiltration $\mathcal{F}^{B^0,\boldsymbol{\nu}}_t$ used in Section \ref{subsec:SPDE} above. Next, we dedicate Section \ref{sec:numerics} to a brief outline of the numerical scheme used to generate the simulations presented in Figure \ref{fig:theonlyfigure} in the introduction.

\subsection{Construction of filtrations and their properties}\label{subsec:filtrations}

As in the proof of Theorem \ref{Thm:Existence} above, we continue to work with the background space $(\bar{\Omega},\mathcal{F},\mathbb{P})$ from (\ref{P_bar}), and the co-countable set of times $\mathbb{T}$ from (\ref{blackboard_t}), for a fixed limit point $(\mathbf{P}^*,B^0)$. Associated to this limit point, we have the  absorbing marginal flow
\[
t\mapsto\boldsymbol{\nu}^*_t:=\mathbf{P}^*(\hat{\eta}_t\in \hspace{-1pt}\cdot \, , \, t<\hat{\tau}),
\]
where, as in the previous subsections, $\hat{\eta}$ denotes the canonical process on $D_\mathbb{R}$ and $\hat{\tau}$ is its first hitting time of zero.

Recall that $\bar{\Omega}=\mathcal{P}(D_\mathbb{R})\times C_\mathbb{R} \times D_\mathbb{R}$ with its product Borel $\sigma$-algebra $\mathcal{F}$. Recall also that $X_t(\mu,w,\zeta)=\zeta_t$ and $B^0_t(\mu,w,\zeta)=w_t$, and note that $\boldsymbol{\nu}^*_t(\mu,w,\zeta)=\mu(\hat{\eta}_t \in \hspace{-1pt}\cdot \, , \, t<\hat{\tau})$. Finally, we can observe that, by virtue of these processes satisfying  (\ref{eq:Relaxed McKean-Vlasov}), $B_t$ will be adapted to any filtration that makes $X_t$, $B^0_t$, and $ \langle \boldsymbol{\nu}^*_t , 1 \rangle$ adapted.

Let us begin by constructing the desired filtration for just $B^0$ and $\boldsymbol{\nu}^*=(\boldsymbol{\nu}^*_t)_{t\geq0}$. As formulated in Proposition \ref{filtration}, we want each marginal $\boldsymbol{\nu}_t^*$ to be measurable as an $\text{M}(\mathbb{R})$-valued random variable, where  the Borel-sigma algebra for $\text{M}(\mathbb{R})$ comes from the duality with (a closed subspace of) the bounded continuous functions $\mathcal{C}_b(\mathbb{R})$ in the uniform topology. Therefore, we will work with the filtration which, at time $t$, is generated by the projections $B^0\mapsto B^0_s$ and $\boldsymbol{\nu}^*\mapsto \langle \boldsymbol{\nu}^*_s , \phi \rangle$ for all $s\leq t$ and all $\phi\in \mathcal{C}_b(\mathbb{R})$. That is, we define
\begin{equation*}
\mathcal{F}^{B^0\!\!,\hspace{1pt}\boldsymbol{\nu}^*}_t :=\sigma\Bigl( (\mu,w) \mapsto \bigl( \langle \mu, \phi(\hat{\eta}_s)\mathbf{1}_{ s<\hat{\tau}  } \rangle, w_s\bigr) :\phi\in \mathcal{C}_b(\mathbb{R}),\,s\leq t  \Bigr)
\end{equation*}
The critical point is now to ensure that $B^0$ remains a Brownian motion in this filtration. The next lemma will allow us to deduce just that.

\begin{prop}[Convergence of the absorbing marginal flow]\label{marginal_flow_weak_conv}
	Let $\boldsymbol{\nu}^N$ be given by \eqref{eq:finite_flow} under Assumption \ref{Assumptions}. For any family of bounded continuous functions $\phi_1,\ldots,\phi_k \in{\mathcal{C}_b(\mathbb{R})}$ we have
	\[
	\bigl( \langle \boldsymbol{\nu}^N_{t_1} , \phi_1 \rangle , \ldots, \langle \boldsymbol{\nu}^N_{t_k} , \phi_k \rangle \bigr) \Rightarrow \bigl( \langle \boldsymbol{\nu}^*_{t_1} , \phi_1 \rangle, \ldots, \langle \boldsymbol{\nu}^*_{t_k} , \phi_k \rangle \bigr),
	\]
	as $N\rightarrow \infty$, for any finite set of times $t_1\ldots,t_k\in \mathbb{T}$.
\end{prop}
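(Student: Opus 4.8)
The plan is to deduce this finite-dimensional convergence from the already-established weak convergence $\mathbf{P}^N \Rightarrow \mathbf{P}^*$ in $(\mathcal{P}(D_{\mathbb{R}}),\mathfrak{T}_{\text{M1}}^{wk})$ (along the fixed subsequence), combined with the continuity properties of the map $\mu \mapsto \langle \mu, \phi(\hat\eta_t)\mathbf{1}_{t<\hat\tau}\rangle$ that were already proved in the guise of Lemma~\ref{prop:loss_continuity}. The point is that $\langle \boldsymbol{\nu}_t, \phi\rangle = \langle \mathbf{P}, \phi(\hat\eta_t)\mathbf{1}_{t<\hat\tau}\rangle$, so the statement is really a statement about continuity of a vector-valued functional $\mu \mapsto (\langle \mu, \phi_1(\hat\eta_{t_1})\mathbf{1}_{t_1<\hat\tau}\rangle,\ldots,\langle\mu,\phi_k(\hat\eta_{t_k})\mathbf{1}_{t_k<\hat\tau}\rangle)$ at $\mathbb{Q}^*$-almost every $\mu$, followed by an application of the continuous mapping theorem.

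First I would fix $\phi_1,\ldots,\phi_k \in \mathcal{C}_b(\mathbb{R})$ and times $t_1,\ldots,t_k \in \mathbb{T}$, and argue that for $\mathbb{Q}^* = \text{Law}(\mathbf{P}^*)$-almost every $\mu$, whenever $\mu^N \to \mu$ in $\mathfrak{T}_{\text{M1}}^{wk}$ we have $\langle \mu^N, \phi_j(\hat\eta_{t_j})\mathbf{1}_{t_j<\hat\tau}\rangle \to \langle \mu, \phi_j(\hat\eta_{t_j})\mathbf{1}_{t_j<\hat\tau}\rangle$ for each $j$. The mechanism is exactly that of the proof of Lemma~\ref{prop:loss_continuity}: by the crossing estimate \eqref{eq:crossing} (valid because $t_j \in \mathbb{T}$ and the particles dip below zero immediately after hitting it, Lemma~\ref{prop:Crossing Prop}), $\mathbb{Q}^*$-a.e.\ $\mu$ is supported on paths $\zeta$ with $\zeta_0>0$ that become strictly negative right after first touching zero; also, by the definition of $\mathbb{T}$ in \eqref{blackboard_t}, $\mu(\hat\eta_{t_j} = \hat\eta_{t_j-}) = 1$. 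Fix such a $\mu$ and apply Skorokhod's representation theorem to $\mu^N \to \mu$, getting c\`adl\`ag processes $Z^N \to Z$ almost surely in $(D_\mathbb{R},\text{M1})$ with laws $\mu^N,\mu$. By \cite[Thm.~12.4.1]{whitt} we have $Z^N_{t_j} \to Z_{t_j}$ a.s.\ (since $t_j$ is a.s.\ a continuity point of $Z$), and by \cite[Thm.~13.4.1]{whitt} together with the support property of $\mu$ we have $\mathbf{1}_{(-\infty,0]}(\inf_{s\le t_j} Z^N_s) \to \mathbf{1}_{(-\infty,0]}(\inf_{s\le t_j} Z_s)$ a.s.\ (this is precisely the argument already carried out in Lemma~\ref{prop:loss_continuity}, using that $t_j$ avoids the single bad time where $\inf_{s\le t} Z_s = 0$). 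Hence $\phi_j(Z^N_{t_j})\mathbf{1}_{t_j < \hat\tau(Z^N)} \to \phi_j(Z_{t_j})\mathbf{1}_{t_j<\hat\tau(Z)}$ a.s., and since $\phi_j$ is bounded, dominated convergence gives convergence of the expectations, i.e.\ $\langle\mu^N,\phi_j(\hat\eta_{t_j})\mathbf{1}_{t_j<\hat\tau}\rangle \to \langle\mu,\phi_j(\hat\eta_{t_j})\mathbf{1}_{t_j<\hat\tau}\rangle$.

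Given this continuity at $\mathbb{Q}^*$-almost every point, I would finish by the continuous mapping theorem \cite[Thm.~2.7]{billingsley}: the map $G:\mu \mapsto (\langle\mu,\phi_1(\hat\eta_{t_1})\mathbf{1}_{t_1<\hat\tau}\rangle,\ldots,\langle\mu,\phi_k(\hat\eta_{t_k})\mathbf{1}_{t_k<\hat\tau}\rangle)$ from $(\mathcal{P}(D_\mathbb{R}),\mathfrak{T}_{\text{M1}}^{wk})$ to $\mathbb{R}^k$ is continuous outside a set of $\mathbb{Q}^*$-measure zero, and $\mathbf{P}^N \Rightarrow \mathbf{P}^*$, so $G(\mathbf{P}^N) \Rightarrow G(\mathbf{P}^*)$, which is exactly the claimed convergence. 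I expect the only subtlety—already handled in Lemma~\ref{prop:loss_continuity} and so available to quote—is the a.s.\ convergence of the indicator $\mathbf{1}_{t<\hat\tau}$ under M1 convergence, which requires both that $t$ be a Lebesgue-generic time (to get convergence of running infima) and that the limiting path strictly crosses zero (to rule out the one problematic time); everything else is routine bounded convergence and the continuous mapping theorem. A minor point worth a sentence is that $\phi_j(\hat\eta_t)\mathbf{1}_{t<\hat\tau}$ is the natural pairing defining $\langle\boldsymbol{\nu}_t,\phi_j\rangle$, so the statement of the proposition and the functional $G$ genuinely coincide.
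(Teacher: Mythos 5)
Your strategy --- Skorokhod representation, a.e.-continuity of the functional $\mu\mapsto\langle\mu,\phi_j(\hat\eta_{t_j})\mathbf{1}_{t_j<\hat\tau}\rangle$, then the continuous mapping theorem --- is exactly the paper's, and the proof is correct in outline. The one place where the two arguments genuinely diverge is the treatment of the product $\phi_j(\hat\eta_{t_j})\mathbf{1}_{t_j<\hat\tau}$ at the boundary. The paper first replaces $\phi$ by $\phi_0$ (equal to $\phi(0)$ on $(-\infty,0)$) and decomposes $\langle\boldsymbol{\nu}_t,\phi\rangle=\langle\mathbf{P},\phi_0(\hat\eta_{t\wedge\hat\tau})\rangle-\phi(0)\,\mathbf{P}(t\geq\hat\tau)$; the stopped-path functional then converges even on the event $\{\inf_{s\leq t}Z_s=0\}$ (where $Z_t=0$ by the crossing property, so both the limit and any oscillation of $\phi_0(Z^n_{t\wedge\tau^n})$ land on $\phi(0)$), and the loss term is handled by quoting Lemma \ref{prop:loss_continuity}. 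You instead keep the product and assert a.s.\ convergence of the indicator $\mathbf{1}_{(-\infty,0]}(\inf_{s\leq t_j}Z^N_s)$. That assertion is true, but it is not ``precisely the argument already carried out in Lemma \ref{prop:loss_continuity}'': that lemma only obtains convergence of the running infima for $t$ in a \emph{random} full-Lebesgue-measure set $\mathbb{T}_\omega$, and upgrades to fixed times by integrating in $t$ and exploiting monotonicity of the loss. For your fixed $t_j$ you therefore need two additional observations: (i) $Z$ is a.s.\ continuous at $t_j$ (the second condition defining $\mathbb{T}$), hence so is its running infimum, so the M1 convergence of the infimum process yields $\inf_{s\leq t_j}Z^N_s\to\inf_{s\leq t_j}Z_s$ at that particular time; and (ii) the discontinuity set of the indicator, $\{\inf_{s\leq t_j}Z_s=0\}$, is null --- which holds because the crossing property forces $\hat\tau(Z)=t_j$ on that event, while $\mu(\hat\tau=t_j)=0$ for $\mathbb{Q}^*$-a.e.\ $\mu$ by the first condition defining $\mathbb{T}$. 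With those two points made explicit your route closes; the paper's $\phi_0$-decomposition buys a proof that never needs the boundary event to be null, at the cost of a slightly fiddlier case analysis on $\{\inf_{s\leq t}Z_s>0\}$, $\{<0\}$, and $\{=0\}$.
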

\begin{proof}
	Fix an arbitrary $t\in\mathbb{T}$ and any $\phi\in \mathcal{C}(\mathbb{R})$. We can then observe that
	\[
	\langle \boldsymbol{\nu}^N_t , \phi \rangle = \langle \boldsymbol{\nu}^N_t , \phi_0 \rangle, \quad \text{where} \quad \phi_0(x):=\begin{cases}
	\phi(x) & x\geq0\\
	\phi(0) & x<0
	\end{cases},
	\]
	 and likewise for $\boldsymbol{\nu}^*_t$. Noting that $\phi_0(\hat{\eta}_{t\land\hat{\tau}})=\phi_0(\hat{\eta}_t)\mathbf{1}_{t<\hat{\tau}}+\phi_0(\hat{\eta}_{\hat{\tau}})\mathbf{1}_{t\geq\hat{\tau}}$ with $\phi_0(\hat{\eta}_{\hat{\tau}})={\phi}(0)$, we therefore have the decomposition
	\[
	\langle \boldsymbol{\nu}^N_t , \phi \rangle = \langle \mathbf{P}^N , \phi_0(\hat{\eta}_{t\land \hat{\tau}}) \rangle - \phi(0) \mathbf{P}^N(t\geq \hat{\tau}),
	\]
and, again, the same is true for the limit $\boldsymbol{\nu}^*_t$. Hence the proposition will follow from the continuous mapping theorem if we can show that, given any $t\in\mathbb{T}$, for $\mathbb{P}^*$-almost every $\mu$,
	\begin{equation}\label{absorbing_flow}
	\langle \mu^n , \phi_0(\hat{\eta}_{t\land \hat{\tau}}) \rangle \rightarrow \langle \mu , \phi_0(\hat{\eta}_{t\land \hat{\tau}}) \rangle \quad \text{and} \quad  \mu^n(t\geq \hat{\tau}) \rightarrow \mu(t\geq \hat{\tau})
	\end{equation}
	whenever $\mu^n\rightarrow\mu$ in $(\mathcal{P}(D_{\mathbb{R}}),\mathfrak{T}_{\text{M1}}^{wk})$. Since $t\in\mathbb{T}$, we have $\mu(t\geq \hat{\tau})=\mu(t-\geq \hat{\tau})$ for $\mathbb{P}^*$-almost every $\mu$, so the claim is true for the second part of (\ref{absorbing_flow}) by Lemma \ref{prop:loss_continuity}.
	
	Noting that $t\in\mathbb{T}$ also implies $\mu(\hat{\eta}_t=\hat{\eta}_{t-})$ for $\mathbb{P}^*$-almost every $\mu$, Skorokhod's representation theorem allows us to rewrite the first part of (\ref{absorbing_flow}) as
	$\mathbb{E}[\phi(Z^n_{t\land\tau^{n}})] \rightarrow \mathbb{E}[\phi(Z_{t\land\tau})]$
	whenever $Z^n\rightarrow Z$ a.s.~in $(D_\mathbb{R},\text{M1})$ with $\mathbb{P}(Z_t=Z_{t-})=1$.
	
	Since $t$ is almost surely a continuity point of $Z$, there is an event $\Omega_t$ of probability one, for which $Z_t^n\rightarrow Z_t$ and $\inf_{s\leq t}Z_s^n \rightarrow \inf_{s\leq t}Z_s $, by \cite[Thms.~12.4.1 \& 13.4.1]{whitt}. On the event $\Omega_t \cap \{ \inf_{s \leq t} Z_s >0 \}$, we eventually have $\phi_0(Z^n_{t\land \tau^n}) = \phi_0(Z^n_t)$, which converges to $\phi_0(Z_t)=\phi_0(Z_{t \land \tau})$. Similarly, on the event $\Omega_t\cap \{ \inf_{s\leq t}Z_s<0 \}$, we eventually have  $\phi_0(Z^n_{t\land \tau^n}) = \phi(0)$, which agrees with $\phi_0(Z_{t\land \tau})=\phi(0)$. For the remaining event $\Omega_t \cap \{ \inf_{s \leq t} Z_s = 0 \}$, it follows as in the proof of Theorem \ref{prop:loss_continuity} that, up to a null set, $Z_t=0$ and hence $\phi_0(Z_{t \land \tau})=\phi_0(Z_t)=\phi(0)$. In principle, $\phi_0(Z^n_{t\land \tau^n})$ may oscillate between the values $\phi(0)$ and $\phi_0(Z^n_t)$, but the latter converges to $\phi_0(Z_t)=\phi(0)$. Combining the above, we have $\phi_0(Z^n_{t \land \tau^n})\rightarrow \phi_0(Z_{t \land \tau})$ almost surely, so   $\mathbb{E}[\phi_0(Z_{t\land\tau})]$ is indeed the limit of $\mathbb{E}[\phi_0(Z^n_{t\land\tau^{n}})]$, by dominated convergence.
\end{proof}

Let $G$ be an arbitrary bounded continuous function $G:C_\mathbb{R}\rightarrow \mathbb{R}$. Fixing any $t_1,\ldots,t_n \in \mathbb{T}\cap[0,t]$ and $f_1,\ldots,f_n\in \mathcal{C}_b(\mathbb{R})$, the previous proposition gives that
\begin{align}\label{filtration_independ}
\bar{\mathbb{E}}\Bigl[ G(B^0_{t+\cdot}-B^0_t) \prod_{i=1}^n f_i(\langle \boldsymbol{\nu}^*_{t_i}, \phi_i \rangle) \Bigr] & = \lim_{N\rightarrow\infty} \mathbb{E}\Bigl[ G(B^0_{t+\cdot}-B^0_t)  \prod_{i=1}^n f_i(\langle \boldsymbol{\nu}^N_{t_i}, \phi_i \rangle)  \Bigr] \nonumber\\
& = \bar{\mathbb{E}}\Bigl[ G(B^0_{t+\cdot}-B^0_t)\Bigr] \bar{\mathbb{E}}\Bigl[ \prod_{i=1}^n f_i(\langle \boldsymbol{\nu}^*_{t_i}, \phi_i \rangle)  \Bigr].
\end{align}
Here the second equality follows from the simple observation that $(\boldsymbol{\nu}^N_s)_{s\leq t}$ is independent of $(B^0_{s+t}-B^0_t)_{s\geq 0}$, since the finite particle system has a unique strong solution with $(B^1,\ldots,B^N)$ independent of $B^0$. We deduce from (\ref{filtration_independ}) that the future increments of $B^0$ are independent of $\mathcal{F}^{B^0\!\!,\hspace{1pt}\boldsymbol{\nu}^*}_t$ and so $B^0$ remains a Brownian motion in this filtration. Furthermore, the filtration is independent of $B$, since both $B^0$ and $\boldsymbol{\nu}^*$ are independent of $B$, as ensured by Lemma \ref{lem:independence}.

 Finally, we need a larger filtration $\mathcal{F}_t$ for which the full solution $(X,B,B^0,\boldsymbol{\nu}^*)$ is adapted. This is achieved by also including the pre-images of the projections $X\mapsto X_s$, and we therefore define
\[
\mathcal{F}_t := \sigma\Bigl( (\mu,w,\zeta) \mapsto \bigl( \langle \mu, \phi(\hat{\eta}_s)\mathbf{1}_{ s<\hat{\tau}  } \rangle, w_s,\zeta_s\bigr) :\phi\in \mathcal{C}(\mathbb{R}),\,s\leq t  \Bigr).
\]
From our construction of the solution $(X,B,B^0,\boldsymbol{\nu}^*)$ on $(\bar{\Omega},\bar{\mathbb{P}},\mathcal{F})$, we note that we can write $B_t(\mu,w,\zeta)$ in terms of $\zeta_s$, $w_s$, and $\langle \mu, \phi(\hat{\eta}_s)\mathbf{1}_{ s<\hat{\tau}}  \rangle$ with $\phi\equiv1$, for $s\leq t$, so $B_t$ is $\mathcal{F}_t$-measurable. By analogy with (\ref{filtration_independ}), at any time $t\geq0$, the future increments of $B$ and $B^0$ are independent of $\mathcal{F}_t$. Thus, they are both Brownian motions with respect to this filtration. This completes the proof of Proposition \ref{filtration}.

\subsection{Outline of the numerical scheme}\label{sec:numerics}
The density approximations shown in Figure \ref{fig:theonlyfigure} were generated according to the following quadrature-based algorithm. Begin with an initial probability density function, $V_0$, and fix step sizes $\delta t$ and $\delta x$ for creating a mesh on the time-space grid $[0,T] \times [0,u]$. Here, the value $u$ is an upper bound chosen large enough so that truncating the solution above the level $u$ in the spatial component is an acceptable approximation to the process on the positive half-line.

At each point on the spatial slice of the grid at time $\delta t$, approximate the density at that point by evolving the solution according heat equation with drift term given by the common noise increment $B^0_{\delta t} - B^0_0$ (that is, numerically evaluate the integral corresponding to this transformation at all points on this slice of the space grid). This procedure gives a candidate density $U_{\delta t}$.

Now estimate the candidate loss
\[
L^{(0)}_{\delta t} = 1 - \int^\infty_0 U_{\delta t}(x)dx
\]
numerically, and then iterate the process
\[
L^{(n+1)}_{\delta t} = L^{(0)}_{\delta t} + \int^{\alpha L^{(n)}_{\delta t}}_0 U_{\delta t}(x)dx
\]
until some stopping rule is reached. The rule we used here is to stop the iterations when $L^{(n+1)}_{\delta t} - L^{(n)}_{\delta t} < \varepsilon$ for the first time, for some threshold $\varepsilon > 0$. 

The final value in the sequence $n\mapsto L^{(n)}_{\delta t}$ is our approximation, $\hat{L}_{\delta t}$, to the loss over the first time step. We then translate the solution by $\alpha \hat{L}_{\delta t}$ in the direction of the boundary (by re-interpolating $U_{\delta t}$ onto the shifted grid) to give our approximation $\hat{V}_{\delta t}$ of the true density, $V_{\delta t}$, after the first time-step. This process is then repeated across all time-steps sequentially. 

We conjecture that this algorithm gives the correct approximation in the limit as the mesh-sizes vanish. Indeed it would be interesting to attempt a rigorous proof, modifying the problem by assuming that quadrature and interpolation steps can be carried out to infinite-precision (i.e.~to prove that the repeated discrete steps of approximating by the heat equation together with the contagion approximation yields the correct limiting behaviour). It is worth emphasising that this sequential approximation to the loss due to contagion corresponds exactly to the construction in \cite[Equation (2.1)]{hambly_ledger_sojmark_2018}. Finally, we note that the number of operations in the above procedure scales as
\[
N_\textrm{time-steps} \times N_\textrm{quadrature points} \times ( N_\textrm{space-steps} + N_\textrm{contagion-steps} ).
\]
The precise properties of the proposed numerical scheme are left for future research.


\paragraph*{Acknowledgements.}

 This project was initiated while A. S{\o}jmark was at the University of Oxford, supported by the EPSRC award EP/L015811/1. We thank Ben Hambly for initial discussions and advice at that time. Moreover, we thank two anonymous referees and the associate editor for their encouragement to expand on certain results in Sections 2 and 3 as well as their suggestions for improvement of the overall presentation.

\bibliographystyle{alpha}

\end{document}